\documentclass[11pt]{amsart}
\usepackage{amssymb,amsfonts,amsthm,amsmath,enumerate,amscd}
\usepackage[english]{babel}
\setlength{\topmargin}{-15mm}  
\setlength{\textwidth}{170mm}
\setlength{\textheight}{250mm}
\setlength{\oddsidemargin}{-3mm}
\setlength{\evensidemargin}{-3mm}


\newtheorem{theorem}{Theorem}[section]

\newtheorem{lemma}[theorem]{Lemma}
\newtheorem{corollary}[theorem]{Corollary}
\newtheorem{claim}[theorem]{Claim}
\newtheorem{proposition}[theorem]{Proposition}
\newtheorem{definition}[theorem]{Definition}
\newtheorem{example}[theorem]{Example}
\newtheorem{remark}[theorem]{Remark}

\newtheorem{convention}{Convention}


\newcommand{\field}[1]{\mathbb{#1}}
\newcommand{\C}{\field{C}}
\newcommand{\K}{\field{K}}
\newcommand{\N}{\field{N}}

\newcommand{\R}{\field{R}}

\newcommand{\wh}[1]{\widehat{#1}}

\newcommand{\aph}{{\alpha}}
\newcommand{\ba}{{\bf a}}
\newcommand{\bbo}{{\bf 0}}
\newcommand{\bB}{{\bf B}}
\newcommand{\be}{{\bf e}}
\newcommand{\bG}{{\bf G}}
\newcommand{\bH}{{\bf H}}
\newcommand{\bp}{{\bf p}}
\newcommand{\bP}{{\bf P}}

\newcommand{\bs}{{\bf s}}
\newcommand{\bS}{{\bf S}}
\newcommand{\bu}{{\bf u}}
\newcommand{\bv}{{\bf v}}
\newcommand{\bw}{{\bf w}}
\newcommand{\bx}{{\bf x}}
\newcommand{\by}{{\bf y}}
\newcommand{\bz}{{\bf z}}
\newcommand{\cA}{\mathcal{A}}
\newcommand{\cB}{\mathcal{B}}
\newcommand{\cC}{\mathcal{C}}
\newcommand{\Cn}{{\C^n}}

\newcommand{\cU}{\mathcal{U}}
\newcommand{\cV}{\mathcal{V}}
\newcommand{\cX}{\mathcal{X}}
\newcommand{\clos}{{\rm clos}}

\newcommand{\dd}{{\partial}}
\newcommand{\dist}{{\rm dist}}
\newcommand{\dlt}{{\delta}}

\newcommand{\gm}{{\gamma}}
\newcommand{\Gm}{{\Gamma}}
\newcommand{\gR}{{{\geq R}}}
\newcommand{\inn}{{\rm inn}}
\newcommand{\Kn}{{\K^n}}

\newcommand{\lbd}{\lambda}
\newcommand{\lgth}{{\rm length}}
\newcommand{\lr}{{{\leq r}}}
\newcommand{\lR}{{{\leq R}}}

\newcommand{\omg}{\omega}
\newcommand{\Omg}{\Omega}
\newcommand{\pro}{{{\rm proj}}}
\newcommand{\rd}{{\rm d}}

\newcommand{\Rn}{{\R^n}}
\newcommand{\Rp}{{\R^p}}
\newcommand{\sing}{{\rm sing}}

\newcommand{\ups}{{\upsilon}}
\newcommand{\ve}{\varepsilon}
\newcommand{\vp}{\varphi}

\newcommand{\zt}{{\zeta}}
\numberwithin{equation}{section}

\numberwithin{equation}{section}




\begin{document}
\title[Characterization of LNE curves]{Characterization of Lipschitz 
Normally Embedded complex curves}

\author[A. Costa]{Andr\'e Costa}
%
\author[V. Grandjean]{Vincent Grandjean}

\author[M. Michalska]{Maria Michalska}

\address{A. Costa, Centro de Ci\^encias e tecnologias, 	Universidade Estadual do Cear\'a, 
Campus do Itaperi, 60.714-903 Fortaleza - CE, Brasil}
\email{andrecosta.math@gmail.com}
\address{V. Grandjean, Departamento de Matem\'atica, Universidade Federal de Santa Catarina, 
Campus Universit\'ario Trindade CEP 88.040-900 Florian\'opolis-SC,
Brasil}
\email{vincent.grandjean@ufsc.br}
\address{M. Michalska, Wydzia\l{} Matematyki i Informatyki, Uniwersytet 
\L{}\'o{}dzki, Banacha 22, 90-238 \L{}\'o{}d\'z{}, Poland}

\email{maria.michalska@wmii.uni.lodz.pl}

\subjclass[2010]{14H50, 32B10, 51F99}

\keywords{Complex curve, inner and outer distance, Lipschitz normally 
embedded}

\begin{abstract}
The main result of the paper states that a 
connected complex affine algebraic curve is Lipschitz normally 
embedded (shortened to LNE afterwards) in $\C^n$ if and only if its 
germ at any singular point is a finite union of non-singular
complex curve germs which are pairwise transverse, and its projective 
closure is in general position with the hyperplane at infinity.
To this aim, we completely characterize
complex analytic curves of a compact complex manifold which are LNE 
(regardless of the given Riemannian structure), and therefore we can relate 
when a projective algebraic curve is LNE in $\mathbb{CP}^n$ with the 
property of its general affine traces being LNE in $\Cn$.
This allows us to deduce that Lipschitz 
classification of LNE curves is topological. We describe a complete
invariant for the (outer and inner) Lipschitz equivalence of affine LNE 
curves and show that most such curves cannot be bi-Lipschitz homeomorphic
to a plane one.
\end{abstract}

\maketitle
\tableofcontents


\section{Introduction}
A Riemannian manifold $(M,g_M)$
carries a natural metric space structure $(M,d_M)$.
Any subset $S$ inherits the \em outer metric space 
structure $(S,d_S)$ \em 
where the distance $d_S$ between points of $S$ is
their distance in $(M,d_M)$. When $S$ is path-connected,
the infimum of the lengths of all the rectifiable curves in $S$ connecting 
two given points of $S$ is called the \em inner distance \em between these 
points, yielding the \em inner metric space structure $(S,d_\inn^S)$ \em 
over $S$. 	While investigating $C^\infty$ functions over closed subsets of 
the Euclidean space, Whitney observed in \cite{Whi1,Whi2} 
that  the metric spaces $(S,d_S)$ and $(S,d_\inn^S)$ need not be 
metrically equivalent, thus introduced the property: there exists a positive 
constant $L$ such that 
\begin{center}
	$
	(P) 
	\hfill \bs,\bs' \in S \; \Longrightarrow \; d_\inn^S(\bs,\bs') \; \leq 
	\; L \cdot d_S(\bs,\bs') , \hfill
	$
\end{center}
which guarantees that 
the metric spaces $(S,d_S)$ and 
$(S,d_\inn^S)$ are equivalent, since $d_S \leq d_\inn^S$. 

The equivalence of inner and outer distances
relates strongly with Lipschitz 
Stratifications, L-regular decompositions and Pancakes of sub-analytic sets
of Euclidean spaces \cite{Mos1,Par1,Kur1,Par2}. 
Property $(P)$ is called \em quasi-convexity \em in the fields of Lipschitz 
Analysis and Metric Geometry, while to Singularity Theorists, it
is known as being \em Lipschitz Normally Embedded \em 
(shortened below to LNE) since \cite{BiMo}. 

The last decade has shown growing interest in investigating analytic subset 
germs $(X,\bbo)$ admitting LNE representatives. 
Still, outside the case of complex curve germs and isolated complex surface 
singularity germs, 
the higher dimensional situation is a terra incognita yet to be 
explored:
the recent short survey \cite{FaPi} and the paper \cite{MeSa} present a 
rather exhaustive list of what is known on this topic. Similarly, asking
when an algebraic set is LNE nearby infinity has been addressed in a single 
paper, under very restrictive hypotheses \cite{FeSa}, which  recently gave 
rise to a generalization to necessary condition on tangent cones in 
\cite{DiRi}.
Moreover, very little 
is known on how to recognize which affine algebraic subsets of $\K^n$ are 
LNE, both when $\K=\C$ and $\K=\R$.
The only non-trivial example we are aware of is from  \cite{KePeRu}: it has 
non-isolated singularities, but is a $\K$-cone, thus essentially a
projective result. Let us mention that the new article \cite{Tar} describes 
fully 
the Lipschitz outer geometry of complex plane algebraic curves 
by a complete finite combinatorial invariant.  Last, \cite{FiMa} gives a 
complete 
description of quasi-convex subsets of the real plane, definable in some 
o-minimal structure. Still, neither of these two works characterizes LNE 
curves.

\bigskip
The goal of this paper is to start the investigation of the LNE property 
for subsets of the Euclidean space or of a given 
compact Riemannian manifold. We treat the case of complex curves in 
both settings: we fully characterize which complex algebraic curves of 
$\Cn$ are LNE and which complex analytic curves of a compact complex 
manifold are LNE (regardless of the Riemannian metric used). 

\medskip
The main result of this paper is the following:

\medskip\noindent
{\bf Theorem \ref{thm:main}.} \em Let $X$ be a complex algebraic curve of 
$\Cn$
of positive degree $d$. It is LNE if and only if the following conditions 
are satisfied:
(0) $X$ is connected; (i) At each singular point $\bx_0$ of $X$, the germ 
$(X,\bx_0)$ consists 
of finitely many pairwise transverse non-singular complex 
algebraic curve germs;
(ii) The intersection of the projective 
closure of $X$ 
in $\mathbb{CP}^n$ with the hyperplane at infinity consists exactly of 
$d$ points. 
\em

\medskip
Prior to establishing 
the affine result, and as a consequence of only local considerations, 
we obtain the following result in complex compact manifolds:

\medskip\noindent
{\bf Proposition \ref{prop:compact}.} \em Let $X$ be a complex analytic
curve of a complex compact manifold. It is LNE if and only if 
the following conditions are satisfied:
(0) $X$ is connected; (i) At each singular point $\bx_0$ of $X$, the
germ $(X,\bx_0)$
consists of finitely many pairwise transverse non-singular complex analytic 
curve germs.\em

\smallskip
Both above results require the same condition at singular points, but the 
fundamental difference between the two lies of course in what happens at 
infinity, situation whose treatment will occupy half of the paper. 
These results leads to the very close interplay between some
metric properties of a complex projective algebraic curve with those of its 
general affine traces.

\medskip\noindent
 {\bf Theorem \ref{thm:main-proj}.} 
\em
Let $X$ be an algebraic curve of $\mathbb{CP}^n$. The following statements 
are equivalent:
\\
(1) $X$ is LNE w.r.t. to the Fubini-Study metric.
\\
(2) There exists a hyperplane $H$ of $\mathbb{CP}^n$ intersecting $X$ 
only at non-singular points of $X$ at which $X$ and $H$ are transverse
and such that the affine trace $X\setminus H$ of $X$ in $\C_H^n := 
\mathbb{CP}^n\setminus H$ is LNE in $\C_H^n$.
\\
(3)
For any hyperplane $H$ of $\mathbb{CP}^n$ intersecting $X$ 
only at non-singular points of $X$ at which $X$ and $H$ are transverse, the 
affine trace $X\setminus H$ is LNE in $\C_H^n$.  
\em

\medskip
This latter result allows us to describe a complete invariant of (outer and inner) Lipschitz equivalence which simply encodes degree, number of singular 
points, genus and multiplicities at singular points. From this, we deduce the following result:

\medskip\noindent
{\bf Theorem \ref{thm:classify}.} 
\em
Let $X_1$ be an irreducible LNE complex algebraic curve of 
$\C^{n_1}$ and $X_2$ be an irreducible LNE complex algebraic curve of $\C^{n_2}$. Then 
$X_1$ and $X_2$ are bi-Lipschitz homeomorphic if and only if they are 
homeomorphic.
\em	

\medskip 
Lipschitz equivalence has been extensively studied for germs of analytic 
curves. In dimension $n_1=n_2=2$, two curves are outer Lipschitz 
equivalent if and only if there 
exists a homeomorphism germ $(\C^{n_1},\bbo) \to (\C^{n_1},\bbo)$ mapping 
$(X_1,\bbo)$ to $(X_2,\bbo)$, see \cite{PhTe}. Therefore, a complete 
outer Lipschitz equivalence invariant of plane curves was introduced 
recently in~\cite{Tar}. As shown in \cite{Fer}, the topological and outer Lipschitz classification are no longer the same for 
space curve germs, i.e. $n_1=n_2 \geq 3$. Furthermore, by the result
\cite{Tei},  a complex space curve germ is outer Lipschitz  equivalent to 
the germ of one of its generic linear plane projections. 
Corollary~\ref{cor:LNE-not-plane} establishes that this is no longer true for 
global curves.

\medskip
The paper is organized as follows. Section \ref{section:G-MP} presents
some necessary background such as the definitions of LNE and locally
LNE, as well as
Lemma \ref{lem:locally-LNE}, Corollary \ref{cor:submfd-LNE} and Proposition 
\ref{prop:lipschitz-graph}, which are among the essential bricks of the 
paper. Lemma \ref{lem:transverse-submfds} of Section \ref{section:ACFISG} 
gives a criterion for some isolated singularity set germ to be locally 
LNE. Corollary \ref{cor:LNE-representatives} 
about a one parameter family of LNE representatives
is of interest in its own. 
Both results are used in Section \ref{section:LC} to obtain 
Proposition \ref{prop:local-LNE}, a criterion for a complex curve germ to be 
LNE. It allows to deduce the aforementioned Proposition \ref{prop:compact},  
at the end of Section \ref{section:LC}. The next three sections develop the 
needed material to show the main 
result Theorem \ref{thm:main} in Section \ref{section:MR}: Sections 
\ref{section:PICn} and \ref{section:UPoAC} deal with the \em unbounded part
\em of the affine curve being LNE, while Section \ref{section:BPoAC} 
focuses on its \em bounded part. \em Section \ref{section:AvsP} 
explores the relationship between the LNE properties of an affine curve
of $\Cn$ and its projective closure.
In the last Section Theorem \ref{thm:classify} states that the complete 
invariant of the Lipschitz classification problem for irreducible 
complex LNE curves is actually a complete topological one for irreducible 
LNE curves. Last, by Corollary 
\ref{cor:LNE-not-plane}, the Lipschitz geometry of LNE 
curves cannot be reduced to that of the LNE plane curves. 
Since we aim at a  self-contained article, 
proofs of the already known Lemma \ref{lem:locally-LNE} and Proposition 
\ref{prop:local-LNE} are given.
%
%
%
%
%
%
%
%
%
%
%
%
%
%
%
%
%
%
%
%
%
%
%
%
%
%
%
%
%
%
%
%
%
%
%
%
%
%
%
%
%
%
%
%
%
%
%
\section{Geometric Preliminaries}\label{section:G-MP}
Let $\K$ be $\R$ or $\C$. The space $\Kn$ is equipped with the Euclidean 
distance denoted by $|-|$.
Let $eucl$ be the Euclidean/Hermitian metric tensor over $\Kn$.

Let $\bx$ be a point of $\K^n$. Let $r$ be a positive number.
The open ball of $\K^n$ centred at $\bx$ with radius $r$ is 
$B_r^{n_\K}(\bx)$, 
for $n_\K := \dim_\R \Kn$.
Its closure is $\bB_r^{n_\K}(\bx)$. The Euclidean sphere of $\K^n$ centred
at $\bx$ and of radius $r$ is $\bS_r^{n_\K-1}(\bx)$. When $\bx$ is the origin 
$\bbo$ of $\K^n$, we will simply write $B_r^{n_\K}$, $\bB_r^{n_\K}$ and 
$\bS_r^{n_\K-1}$.

\begin{convention} A Riemannian manifold $(M,g_M)$ consists of a $C^k$
or $\K$-analytic manifold $M$, for $k \in \N_{\geq 1}\cup \infty$, 
of positive $\K$-dimension $m$, and of a continuous Riemannian metric
$g_M$ on $M$.
\end{convention}

Let $(M,g_M)$ be a Riemannian manifold. Thus  $g_M$ induces the 
distance $d_M$ on $M$ defined as the infimum of the lengths of rectifiable 
curves connecting any given pair of points. 

Any subset $S$ of $M$ admits two natural metric space structures on $S$ 
inherited from $(M,g_M)$:
\begin{definition}\label{def:metrics}
1) The \em outer distance on $S$ \em is the distance function $d_S$ on 
$S\times S$ obtained by restricting $d_M$ to $S\times S$. 

\smallskip\noindent
2) The \em inner distance on $S$ \em is the function 
$$
d_\inn^S: S\times S \to [0,+\infty]
$$
such that given $\bs,\bs'\in S$, the number $d_\inn^S(\bs,\bs')$ is the infimum of the lengths
of the rectifiable paths lying in $S$ joining $\bs$ and $\bs'$. If there is no such a path, then $d_\inn^S(\bs,\bs') = \infty$.   
\end{definition}
Observe that $d_S \leq d_\inn^S$. We recall the following notion.
\begin{definition}\label{def:quasi-isometric}
Let $(X,d_i)$, where $i=1,2$, be two metric space structures over the
space $X$. The metric space structures $(X,d_1)$ and $(X,d_2)$ are \em 
equivalent \em if 
there exists a constant $L>1$ such that

\begin{center}
$
\displaystyle{
\bx,\bx'\in X \; \Longrightarrow \; \frac{1}{L}\cdot d_1(\bx,\bx') \; 
\leq \; d_2(\bx,\bx') \; \leq \; L \cdot d_1(\bx,\bx').
}
$
\end{center}
\end{definition}
An elementary question arising from presenting these two definitions is 
when are the metric spaces $(S,d_S)$ and $(S,d_\inn^S)$ equivalent?  
This motivates the next
\begin{definition}\label{def:LNE}
A subset $S$ of the Riemannian manifold $(M,g_M)$ is \em Lipschitz normally 
embedded \em (shortened to LNE) if the identity mapping $(S,d_S)  \to 
(S,d_\inn^S)$ is bi-Lipschitz: there exists a positive constant $L$ such 
that
$$
\bs,\bs' \; \in \; S \; \Longrightarrow \; d_\inn^S(\bs,\bs') \; \leq \;
L \cdot d_S(\bs,\bs').
$$
Any constant $L$ satisfying the previous inequality is called a \em LNE 
constant of $S$. \em 
\end{definition}
\noindent
We will simply use the expression LNE, but it must never be forgotten
that it is with respect to the metric structure given by the Riemannian 
metric $g_M$ on $M$. 
\begin{remark}\label{rem:compact-manifold}
(i) The metric space structure $(M,d_M)$ associated with the Riemannian 
manifold $(M,g_M)$ is LNE, with LNE constant $1$. Moreover each chart is 
locally bi-Lipschitz.
\\
(ii) Since any two continuous Riemannian metrics on a $C^1$ compact 
Riemannian manifold $M$ yield equivalent metric space structures,
the property of being LNE depends only on the $C^1$ differentiable structure. 
Moreover, following Sullivan's result 
\cite{Sul}, a compact topological manifold of dimension 
different from $4$ has
a unique metric space structure, up to equivalence.
Therefore the property of being LNE in a compact 
Riemannian manifold of dimension which is not 4 depends only on the ambient topology.
\end{remark}

\medskip
Finding a local characterisation of a property is usually a first step to 
understand it globally, but the property \em 
being LNE \em does not pass to germs: the open ball $B_r^2$ centred at $\bbo$ 
of positive radius $r$ is a 
LNE representative of the germ $(\R^2,\bbo)$, while $\{(x,y) : y^2 -(x-r)^3 
>0 \}\cap B_{2r}^2$ is a non-LNE one.

We introduce 
the following notion (see also \cite{KePeRu}),
that will be central to the paper.
\begin{definition}\label{def:loc-LNE}
1) A subset $S$ of the Riemannian manifold $(M,g_M)$ is \rm locally LNE at 
the point $\bs$ of $\clos(S)$, \em the closure of $S$ in $M$, if there 
exists an open neighbourhood $\cU$ of $\bs$ in $M$ such
that $S\cap \cU$ is LNE.

\noindent
2) The subset $S$ is \em locally LNE \em if it is locally LNE at each
point of $\clos(S)$.
\end{definition}
The next Lemma 
generalizes in our setting the Euclidean result \cite[Proposition 
2.4]{KePeRu}. It
presents a simple criterion for a compact subset of $(M,g_M)$ to be LNE. 
\begin{lemma}\label{lem:locally-LNE}
Let $S$ be a compact connected subset of the Riemannian manifold $(M,g_M)$. 
It is LNE if and only if it is locally LNE.
\end{lemma}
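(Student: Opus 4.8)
The plan is to prove the two implications separately, the forward one being essentially trivial and the converse one being the content of the lemma.

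\medskip\noindent
\textbf{The easy direction.} If $S$ is LNE, then for any point $\bs\in\clos(S)$ and any open neighbourhood $\cU$ of $\bs$, the restriction $S\cap\cU$ inherits an LNE constant: given $\bx,\bx'\in S\cap\cU$, the inner distance $d^{S\cap\cU}_\inn(\bx,\bx')$ is at least $d^S_\inn(\bx,\bx')$, but that is the wrong direction. The correct observation is that LNE of $S$ together with compactness gives more: one should not expect an \emph{arbitrary} neighbourhood to work, only a sufficiently small one. So for the forward implication I would argue that for $\bs\in\clos(S)$ and $\cU$ a small enough ball, any two points of $S\cap\cU$ close to $\bs$ can be joined inside $S$ by a short path (by LNE of $S$), and because $S$ is compact and the path is short, it stays inside a slightly larger neighbourhood $\cU'$; rescaling $\cU$ handles this. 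Actually the cleanest route is: pick $\cU = B$ a ball around $\bs$; if $S\cap 2B$ fails to be LNE for every such $B$, we extract points $\bx_k,\bx_k'\to\bs$ with $d^{S\cap 2B}_\inn/d^S$ blowing up, but since $d^S(\bx_k,\bx_k')\to 0$ and $S$ is LNE, the connecting paths in $S$ have length $\to 0$, hence eventually lie in $2B$, contradiction. Thus $S$ is locally LNE at $\bs$, and $\bs$ was arbitrary.

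\medskip\noindent
\textbf{The hard direction — local LNE implies LNE.} Assume $S$ is compact, connected and locally LNE. I would first produce, by compactness, a finite cover of $\clos(S)$ by open sets $\cU_1,\dots,\cU_N$ of $M$ such that each $S\cap\cU_i$ is LNE with some constant $L_i$, and I would shrink to a Lebesgue-number scale: there is $\rho>0$ so that every subset of $\clos(S)$ of diameter $<\rho$ (in $d_M$) lies in some $\cU_i$. Set $L_0 = \max_i L_i$. The key quantitative point is to handle pairs $\bs,\bs'\in S$ whose \emph{outer} distance is $\geq\rho/4$, say. For these I need a uniform upper bound on $d^S_\inn(\bs,\bs')$: since $S$ is compact and connected, and each point has a locally LNE (hence, locally, path-connected with finite inner distance) neighbourhood, the inner metric $d^S_\inn$ is finite everywhere and, being lower semicontinuous and defined on the compact set $\clos(S)\times\clos(S)$ minus the "diagonal-ish" region $\{d_S<\rho/4\}$, attains a finite maximum $D$ there. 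Hence for such pairs $d^S_\inn(\bs,\bs')\leq D \leq (4D/\rho)\, d_S(\bs,\bs')$.

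\medskip\noindent
It remains to treat pairs with $d_S(\bs,\bs')<\rho/4$. Here I would like to say the whole short geodesic-like path lies in one $\cU_i$, but the subtlety — and I expect this to be the main obstacle — is that a priori we only control the \emph{outer} distance between $\bs$ and $\bs'$, not the inner one, so we do not yet know there is a short path in $S$ between them at all; local LNE only gives short paths between points already known to be inner-close within a chart. The standard fix is a chaining/continuity argument: the function $\bs'\mapsto d^S_\inn(\bs,\bs')$ is continuous on the connected set $S$ (inner-distance is $1$-Lipschitz for itself and finite by the previous paragraph), so the set of $\bs'$ reachable from $\bs$ with $d^S_\inn(\bs,\bs')$ small is open and closed among points of small outer distance; more concretely, cover the (outer) $\rho/4$-ball around $\bs$ in $S$ by finitely many of the $\cU_i$, and use connectedness of $S\cap B_{\rho/4}(\bs)$ (which may itself need $\rho$ small, using that $M$ locally looks Euclidean and $S$ is locally LNE hence locally connected) to break a would-be path into at most a bounded number $k_0$ of segments each inside a single $\cU_i$, with endpoints whose mutual outer distances sum to a controlled multiple of $d_S(\bs,\bs')$; applying the LNE constant $L_0$ on each segment yields $d^S_\inn(\bs,\bs') \leq k_0 L_0\, C\, d_S(\bs,\bs')$ for a geometric constant $C$ coming from the comparison of $d_M$ with Euclidean distance in the charts. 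Taking $L := \max\{4D/\rho,\ k_0 L_0 C\}$ gives a global LNE constant. I would be careful to invoke Remark~\ref{rem:compact-manifold}(i) and the bi-Lipschitz nature of charts to pass between $g_M$-distances and Euclidean ones, and to note that local LNE at points of $\clos(S)\setminus S$ is what guarantees good behaviour of $S$ near its "boundary", so that the supremum $D$ and the local connectedness used above are genuinely finite/valid.
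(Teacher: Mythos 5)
Your overall architecture for the hard direction (a finite cover by LNE pieces, a uniform bound away from the diagonal, the local constants near the diagonal) is the same as the paper's, but both directions as written contain genuine gaps. For the easy direction you have made the problem harder than it is: Definition~\ref{def:loc-LNE} only asks for \emph{some} open neighbourhood $\cU$ of $\bs$ with $S\cap\cU$ LNE, so one may simply take $\cU=M$, as the paper does; no shrinking, rescaling or sequence extraction is needed (and your extraction argument does not explain why the bad pairs $\bx_k,\bx_k'$ should converge to $\bs$, nor why $d_S(\bx_k,\bx_k')\to 0$).

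In the hard direction there are two problems. First, the justification of the uniform bound $D$ is wrong as stated: a lower semicontinuous function on a compact set attains its \emph{infimum}, not its supremum, and need not be bounded above. The correct route (implicit in the paper, which asserts continuity of $f=d_\inn^S/d_S$) is that local LNE-ness makes $d_\inn^S$ locally Lipschitz with respect to $d_S$ --- if $\bs''$ lies in the same LNE piece $S\cap\cU_i$ as $\bs'$, then $|d_\inn^S(\bs,\bs')-d_\inn^S(\bs,\bs'')|\le d_\inn^S(\bs',\bs'')\le L_0\, d_S(\bs',\bs'')$ --- hence continuous, hence bounded on the compact set $\{d_S\ge\rho/4\}$; your appeal to ``$1$-Lipschitz for itself'' is circular, since continuity with respect to the topology induced by $d_S$ is exactly what is at stake. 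Second, the ``main obstacle'' you identify for close pairs is illusory, and the chaining argument you sketch to overcome it is not a proof. LNE of $S\cap\cU_i$ is an unconditional statement about \emph{all} pairs of points of $S\cap\cU_i$ (in particular it already forces $S\cap\cU_i$ to be path-connected with controlled inner distance), so once $d_S(\bs,\bs')$ is below the Lebesgue number of the cover both points lie in a single $\cU_i$ and $d_\inn^S(\bs,\bs')\le d_\inn^{S\cap\cU_i}(\bs,\bs')\le L_0\, d_S(\bs,\bs')$ directly; this is exactly what the paper does via the neighbourhood $V=\bigcup_i U_i\times U_i$ of the diagonal. Your replacement --- breaking a path into $k_0$ pieces ``with endpoints whose mutual outer distances sum to a controlled multiple of $d_S(\bs,\bs')$'' --- is precisely the kind of estimate that fails in general and would itself require the LNE property to justify.
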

\begin{proof}
We follow the proof of \cite{KePeRu}. 
When $S$ is LNE, taking $\cU = M$ yields that $S$ is locally LNE.

\medskip
For each point $\bs$ of $S$, let $\cU_\bs$ be a neighbourhood of $\bs$ in
$M$ such that $U_\bs := S \cap \cU_\bs$ is LNE. Thus $S$ is covered by 
$U_1,\ldots,U_N,$ where $U_i := U_{\bs_i}$ for some points 
$\bs_1,\ldots,\bs_N$ of $S$.
Let $L_0$ be a LNE constant of $U_1,\ldots,U_N$. 
Consider the following function
$$
f: S\times S \setminus \Delta \to \R, \;\;
(\bs,\bs') \mapsto \frac{d_\inn^S(\bs,\bs')}{d_S(\bs,\bs')},
\;\; {\rm where} \;\;
\Delta := \{(\bs,\bs) \in S \times S\}.
$$
Note that $f$ is bounded below by $1$. Let $V := \cup_{i=1}^N U_i\times U_i$
which is an open neighbourhood of $\Delta$ in $S\times S$. Observe that it is 
connected.
Since $f$ is continuous and $S\times S \setminus V$ is compact it admits a 
maximum $L_1$. Let $(\bs,\bs')$ in $V\setminus \Delta$, thus there exists $i$ 
such that 
$(\bs,\bs') \in U_i\times U_i$. Therefore $f(\bs,\bs') \leq L_0$. 
Thus $S$ is LNE with LNE constant $\max(L_0,L_1)$.
\end{proof}
We recall that a subset $S$ of a $C^k$ manifold $M$, $k\geq 1$, is a \em 
$C^1$ sub-manifold (possibly with boundary) of $M$ \em if: i) $S$ is a $C^1$ 
manifold (possibly with boundary); ii) the manifold topology coincides 
with the subspace topology; iii) the inclusion mapping 
$S\hookrightarrow M$ is a $C^1$ injective immersion.
As a corollary of the previous criterion we obtain the following result that 
we will use extensively in the paper.
\begin{corollary}\label{cor:submfd-LNE}
Let $N$ be a $C^1$ compact connected sub-manifold of the Riemannian manifold 
$(M,g_M)$ possibly with $C^1$ boundary. Then $N$ is LNE.
\end{corollary}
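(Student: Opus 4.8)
The goal is to deduce Corollary~\ref{cor:submfd-LNE} from Lemma~\ref{lem:locally-LNE}, so by that Lemma it suffices to show that a $C^1$ compact connected submanifold $N$ (possibly with $C^1$ boundary) of $(M,g_M)$ is \emph{locally} LNE, i.e.\ that each point $\bs \in \clos(N) = N$ (here $N$ is already closed, being compact) has a neighbourhood $\cU$ in $M$ with $N \cap \cU$ LNE. First I would fix $\bs \in N$ and use the submanifold hypothesis to produce a $C^1$ chart of $M$ around $\bs$ adapted to $N$: since the inclusion $N \hookrightarrow M$ is a $C^1$ injective immersion whose manifold topology agrees with the subspace topology, there is an open neighbourhood $\cU$ of $\bs$ in $M$ and a $C^1$ diffeomorphism $\phi : \cU \to \Omega \subseteq \R^{m_\R}$ onto an open set such that $\phi(N \cap \cU)$ is either a relatively open piece of a linear subspace $\R^k \times \{0\}$, or (in the boundary case) of a half-space $\{x_k \geq 0\} \times \{0\}$ inside it. I would shrink $\cU$ so that $\phi(N\cap\cU)$ is, say, an open ball (or half-ball) in that model, which is convex and hence visibly LNE with constant $1$ for the Euclidean metric.

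The remaining point is a metric transfer argument: LNE-ness of $N \cap \cU$ must be measured with respect to $d_M$ and the inner distance induced by $g_M$, not with respect to the flat model. Here I would invoke Remark~\ref{rem:compact-manifold}(i), that each chart is locally bi-Lipschitz: after shrinking $\cU$ once more to a relatively compact neighbourhood, $\phi$ is bi-Lipschitz from $(\cU, d_M)$ (equivalently, from $\cU$ with the Riemannian distance of $g_M$, which on a small relatively compact set is comparable to $d_M$) onto $(\Omega', |-|)$ with some constant $C \geq 1$. A bi-Lipschitz homeomorphism between ambient metric spaces distorts both outer distances and path lengths, hence inner distances, by the same constant $C$; so it carries a subset with LNE constant $L$ to one with LNE constant $C^2 L$. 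Applying this to the convex model piece $\phi(N \cap \cU)$, which has LNE constant $1$, shows $N \cap \cU$ is LNE with constant at most $C^2$. That establishes local LNE-ness at the arbitrary point $\bs$, and since $N$ is compact and connected, Lemma~\ref{lem:locally-LNE} upgrades this to LNE-ness of $N$.

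The only genuinely delicate step is the metric transfer, and within it the claim that bi-Lipschitz equivalence of the \emph{ambient} spaces passes to inner distances of subsets: one has to check that the length of a rectifiable path is distorted by at most the bi-Lipschitz constant (straightforward from the definition of length as a supremum over partitions and the pointwise distance bound), and then that taking infima over admissible paths preserves the two-sided estimate. A secondary subtlety is that $d_M$ and the length metric of $g_M$ agree (the former is \emph{defined} as the latter), and that on a small relatively compact chart domain the chart is bi-Lipschitz with respect to $d_M$ — this is exactly the content of Remark~\ref{rem:compact-manifold}(i), so no new work is needed. In the boundary case nothing changes: a half-ball in $\R^{m_\R}$ is still convex, so the model piece still has LNE constant $1$. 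Thus the proof is essentially: adapted chart $\Rightarrow$ convex model $\Rightarrow$ transfer LNE constant through the bi-Lipschitz chart $\Rightarrow$ apply Lemma~\ref{lem:locally-LNE}.
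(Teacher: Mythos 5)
Your proof is correct and follows essentially the same strategy as the paper: reduce to local LNE-ness via Lemma~\ref{lem:locally-LNE}, exhibit a convex Euclidean model (ball or half-ball) for $N$ near each point, and transfer the LNE constant through a bi-Lipschitz chart. The only cosmetic difference is that you flatten $N$ with a single adapted $C^1$ chart, whereas the paper reaches the same convex model in two steps (orthogonal projection onto the tangent space to reduce to the full-dimensional Euclidean case, then a boundary chart, then general charts); both rest on the same transfer principle that an ambient bi-Lipschitz map distorts outer and inner distances by the same constant.
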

\begin{proof}
Since $N$ is connected we can assume that $M$ is also connected. Let $m,n$
be the respective dimensions of $M,N$. By Lemma \ref{lem:locally-LNE}, it suffices to show that $N$ is locally LNE.

\smallskip\noindent
$\bullet$ \em Assume that $(M,g_M) = (\R^m,eucl)$ and $n=m$. \em 
For each $\bx$ in $N\setminus \dd N$, there exists a positive radius $r$ 
such that $\bB_r^m(\bx)$ is contained in $N$. Since the ball is convex, it is 
LNE, thus $N$ is locally LNE at $\bx$. Let $\bx\in\dd N$. There exists 
a positive radius $r$ and a $C^1$ diffeomorphism
$$
\phi_\bx^\dd: N\cap B_r^m(\bx)\to B_{2r}^{m-1} \times [0,2), 
$$
mapping $\dd N\cap B_r^m(\bx)$ onto $B_2^{m-1} \times 0$. 
Up to shrinking $r$ we can assume that $\phi_\bx^\dd$ is bi-Lipschitz: there 
exists a positive constant $A$ such that 
$$
\by,\by' \in N\cap B_r^m(\bx) \; \Longrightarrow \; |\by - \by'| \leq A\cdot
|\phi_\bx^\dd(\by) - \phi_\bx^\dd(\by')| \;\leq \; A^2\cdot|\by - \by'|. 
$$
Since $B_{2r}^{m-1} \times [0,2)$ is convex in $\R^m$ it is LNE, thus 
$N\cap B_r^m(\bx)$ is also LNE. Therefore, $N$ is locally LNE at each of its points.

\smallskip\noindent
$\bullet$ \em Assume that $(M,g_M) = (\R^m,eucl)$ and $n<m$. \em 
For each $\bx$ in $N$, let $T_\bx$ be the tangent space of $N$ at $\bx$ and 
let $\pi_\bx :\R^m \to T_\bx$ be the orthogonal projection onto $T_\bx$.
There exists a positive radius $r$ such that $p_\bx$, the restriction of 
$\pi_\bx$ to $N\cap B_r^m(\bx)$ is a $C^1$ diffeomorphism onto its image, and
further we find
$$
\by,\by' \in N\cap B_r^m(\bx) \; \Longrightarrow \; |\by - \by'| \geq 
|p_\bx(\by) - p_\bx(\by')| \;\geq \; \frac{1}{2}|\by - \by'|. 
$$
Let $Z:=\pi_\bx(N)$. Therefore $(Z,\bx)$ is a $C^1$ sub-manifold possibly with boundary
of $T_\bx$ and thus $Z$ is locally LNE at $\bx$ w.r.t. the Euclidean metric 
of $T_\bx$ by the same argument as in the previous case. Since $T_\bx$ is still an euclidean space 
for the restriction of the Euclidean metric of $\R^m$, there exists a 
positive radius $r'$ such that $Z\cap B_{r'}^m(\bx)$ is LNE since 
$T_\bx\cap B_{r'}^m(\bx')$ is an Euclidean ball of $T_\bx$ of radius $r'$. 
Since $p_\bx$ is bi-Lipschitz we deduce that $p_\bx^{-1}(Z\cap 
B_{r'}^m(\bx))$ is LNE whenever $2r' \leq r$. 

\smallskip
We conclude that any sub-manifold of $\R^m$ is locally LNE at each of its 
points.

\smallskip\noindent
$\bullet$ \em General Case. \em
For each $\bx$ in $N$, let $\Phi_\bx :\cV_\bx \to \R^m$ be a $C^k$ chart of 
$M$ mapping $\bx$ onto $\bbo$. Therefore the germ $(\Phi_\bx(N),\bbo)$ is 
that of $C^1$ sub-manifold, possibly with boundary, and is of dimension $n$. 
Let $\dlt_\bx$ be the distance on $B_1^m$ obtained from the pushed-forward 
Riemannian metric $h_\bx := (\Phi_\bx)_*g_M$. Let $V_\bx := \Phi_\bx^{-1}(B_1^m)$. 
Since $\Phi_\bx$ yields a bi-Lipschitz 
homeomorphism $(V_\bx,g_M|_{V_\bx}) \to (B_1^m,eucl|_{B_1^m})$, the metric
space structures $(B_1^m,\dlt_\bx)$ and $(B_1^m,|-|)$ are equivalent. 
Since $\Phi_\bx(N)$ is locally LNE at  
$\bbo$ in $\R^m$, it is also locally LNE at $\bbo$ w.r.t. $h_\bx$. Therefore 
$N$ is locally LNE at $\bx$. 
\end{proof}
In the Euclidean context, the following result complements
Corollary \ref{cor:submfd-LNE}. 
\begin{lemma}\label{lem:LNE-complement}
Let $N$ be a compact $C^1$ sub-manifold with $C^1$ boundary $\dd N$ of 
$\Rn$ of dimension $n$. Then each connected component of $\Rn\setminus N$ 
and of $\Rn \setminus (N \setminus \dd N)$ are LNE.
\end{lemma}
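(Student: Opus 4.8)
The plan is to reduce the statement to the local structure of the compact $C^1$ domain $N$ together with the compactness criterion of Lemma~\ref{lem:locally-LNE}, treating the bounded complementary components and the single unbounded one separately. When $n=1$ every component of $\Rn\setminus N$ and of $\Rn\setminus(N\setminus\dd N)$ is an interval, hence convex, hence LNE, so I may assume $n\ge 2$.

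First I would record the local models. For $\bx_0\in\dd N$ there are a radius $r>0$ and a $C^1$ diffeomorphism $B_r^n(\bx_0)\to B_1^n$ taking $N$ onto $\{y_n\le 0\}\cap B_1^n$ and $\dd N$ onto $\{y_n=0\}\cap B_1^n$; shrinking $r$ as in the proof of Corollary~\ref{cor:submfd-LNE} it may be taken bi-Lipschitz. Since open and closed half-balls are convex, hence LNE, this shows that $N$, $\Rn\setminus N$ and $\Rn\setminus(N\setminus\dd N)$ are each locally LNE at every point of $\dd N$, and trivially so at points of their interiors. The same model shows that every point of $\dd N$ lies in the closure of a unique connected component of $\Rn\setminus N$; hence the closures of distinct components are disjoint and, for each component $C$ of $\Rn\setminus N$, the closed set $\clos(C)$ is also open in $\Rn\setminus(N\setminus\dd N)$, i.e. is a connected component of it. So it suffices to prove that $\clos(C)$ is LNE for each such $C$ (which gives the ``$\Rn\setminus(N\setminus\dd N)$'' assertion) and then to deduce that $C$ itself is LNE.

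For a bounded component $C$, the set $\clos(C)$ is compact, connected and locally LNE, hence LNE by Lemma~\ref{lem:locally-LNE}; and then $C$ is LNE with the same constant, since $d_C=d_{\clos(C)}$ on $C\times C$ while a standard collar argument (pushing a path slightly off $\dd N$, along which $C$ lies on one $C^1$ side, at arbitrarily small length cost) gives $d_\inn^C=d_\inn^{\clos(C)}$ there.

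The remaining, genuinely harder, case is the unbounded component $\Omega$: here $\clos(\Omega)$ is non-compact and Lemma~\ref{lem:locally-LNE} does not apply, so the plan is to split $\clos(\Omega)$ into a compact core and the exterior of a ball and to glue as in the proof of Lemma~\ref{lem:locally-LNE}. Fix $R>0$ with $N\subseteq B_R^n$; then $\Rn\setminus B_R^n\subseteq\Omega$ and $\clos(\Omega)=S_1\cup S_2$, where $S_1:=\clos(\Omega)\cap\bB_{2R}^n$ and $S_2:=\Rn\setminus B_R^n$. The set $S_2$, the exterior of a ball in $\Rn$ with $n\ge 2$, is classically LNE, say with constant $\tfrac{\pi}{2}$ (restrict to the $2$-plane spanned by the centre and any two given points, reducing to the planar exterior of a disc). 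The set $S_1$ is compact; it is connected, since any of its points is joined inside $S_1$ to $\bS_{2R}^{n-1}\subseteq S_1$ by the initial arc of a path in $\clos(\Omega)$ leaving $\bB_{2R}^n$; and it is locally LNE (at points of $\dd N$ by the local model; near a point $\bx_0$ of $\bS_{2R}^{n-1}$ it equals the convex set $\bB_{2R}^n\cap B_\ve^n(\bx_0)$ for $\ve$ small, because $N\subseteq B_R^n$; trivially at interior points). Hence $S_1$ is LNE by Lemma~\ref{lem:locally-LNE}; let $L_1,L_2$ be LNE constants of $S_1,S_2$. To glue, take $\bx,\by\in\clos(\Omega)$: if both lie in $S_1$, or both in $S_2$, then $d_\inn^{\clos(\Omega)}(\bx,\by)\le\max(L_1,L_2)\,d_{\clos(\Omega)}(\bx,\by)$ since $S_1,S_2\subseteq\clos(\Omega)$; otherwise, say $|\bx|<R<2R<|\by|$, so $d_{\clos(\Omega)}(\bx,\by)>R$, and inserting any $\bp\in\bS_{3R/2}^{n-1}\subseteq S_1\cap S_2$ one gets $d_\inn^{\clos(\Omega)}(\bx,\bp)\le 4R\,L_1$ and $d_\inn^{\clos(\Omega)}(\bp,\by)\le L_2\bigl(\tfrac{5R}{2}+d_{\clos(\Omega)}(\bx,\by)\bigr)$, whence $d_\inn^{\clos(\Omega)}(\bx,\by)\le\bigl(4L_1+\tfrac{7}{2}L_2\bigr)\,d_{\clos(\Omega)}(\bx,\by)$. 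Thus $\clos(\Omega)$ is LNE, and $\Omega$ is LNE by the collar argument above. The main obstacle is exactly this non-compact component: one has to isolate a \emph{connected} compact core, use the classical LNE-ness of the exterior of a ball, and dispose of the ``mixed'' pairs of points by hand; the $C^1$-flattening of $\dd N$ that makes every local model convex is routine but indispensable.
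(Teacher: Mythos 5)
Your argument is correct, but it follows a genuinely different route from the paper's. The paper's proof is a single global estimate: it joins the two points by the straight Euclidean segment, isolates the finitely many sub-intervals along which the segment crosses $N$, and replaces each such crossing by a detour inside the corresponding connected component of $\dd N$ --- a compact connected $C^1$ sub-manifold, hence LNE by Corollary~\ref{cor:submfd-LNE} --- so that the total inner length is bounded by a single constant times $|\bx-\bx'|$, with no case distinction between bounded and unbounded components. Your proof instead is local-to-global: bounded components are dispatched by Lemma~\ref{lem:locally-LNE} once the $C^1$ flattening makes all local models convex, and the unbounded component is split into a compact connected core glued by hand to the exterior of a large ball. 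Each approach has its merits: the paper's gives a cleaner uniform constant and, importantly, does \emph{not} presuppose that $\Rn\setminus B_R^n$ is LNE --- indeed the paper deduces that fact (Claim~\ref{claim:Cn-Br}) \emph{from} this lemma, so had you simply cited it you would have been circular; you avoid this by supplying the independent planar $\pi/2$ argument, which is legitimate. Your treatment of the open components via a collar pushing paths off $\dd N$ is at the same level of rigour as the paper's one-line remark that the open case ``follows from the exact same demonstration'' (which in fact hides the very same issue, since the paper's detours run along $\dd N$ itself); neither proof spells this out, and it would be worth a sentence in either.
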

\begin{proof}
We will treat the case $\Rn \setminus (N \setminus \dd N)$. The remaining 
one will follow from the exact same demonstration. 
Let $E:= \Rn\setminus (N\setminus \dd N)$. Observe that $\dd E = \dd N$, and that $E$ has a single unbounded connected component. Since the bounded components of $E$ are compact manifolds with  boundary, they are LNE. 
Without loss of generality we can assume that $E$ is connected.

Let $B_1,\ldots,B_s,$ be the connected components of $\dd E$. Define the positive number
$$
\dlt : = \min_{1\leq i < j\leq s} \dist(B_i,B_j), 
$$
with the convention that $\dlt =\infty$ if $\dd E$ is connected. 

Let $\bx, \bx'$ be two distinct points of $E$, and let $I$ be the segment of 
$\Rn$ with end points $\bx,\bx'$. The segment $I$ is oriented from $\bx$ to $\bx'$, thus 
is totally ordered. When $I$ intersects with $\dd E$,  we can assume without loss of 
generality that $\bx,\bx'$ are both in $\dd E$. 
Since $\dlt$ is positive, there exists finitely many
open intervals $J^k = (\bx_k^1,\bx_{k+1}^0)$ of $I$, 
for $k= 1,\ldots,p$, with the following properties:
\begin{itemize}
\item[i)] 
$\bx = \bx_1^0 \leq \bx_1^1 < \bx_2^0\leq  \ldots < \bx_p^0\leq \bx_p^1 = 
\bx'$;
\item[ii)] For each $k=1,\ldots,p$, there exists $i_k \geq1$ such that $\bx_k^0,\bx_k^1 \in B_{i_k}$;
\item[iii)] For each $k=1,\ldots,p-1$, the interval $J^k$ is contained in $I\setminus N$;
\item[iv)] $i_k\neq i_{k+1}$ for each $k=1,\ldots,p-1$;
\item[v)] $p$ is maximal for the four previous conditions. 
\end{itemize}
Observe that each connected component of
the compact set $I\cap \dd E$ is contained in one of the intervals $[\bx_k^0,\bx_k^1]$,
where $k=1,\ldots,p$.

Since each connected component of $\dd E$ is LNE by Corollary 
\ref{cor:submfd-LNE}, let $A$ be a common LNE constant. For each 
$k=1,\ldots,p$, we find
$$
d_\inn^E (\bx_k^0,\bx_k^1) \leq d_\inn^{B_{i_k}} (\bx_k^0,\bx_k^1) \leq A\, |\bx_k^0 - \bx_k^1|.
$$ 
Since the segment $J^k$ is contained in $E$, we find  
$$
d_\inn^E (\bx_k^1,\bx_{k+1}^0) = |\bx_k^1 - \bx_{k+1}^0| \leq A\, 
|\bx_k^1 - \bx_{k+1}^0|.
$$ 
We deduce 
$$
d_\inn^E (\bx,\bx') \leq \sum_{k=1}^{p} d_\inn^E (\bx_k^0,\bx_k^1) + \sum_{k=1}^{p-1} d_\inn^E (\bx_k^1,\bx_{k+1}^0)\leq A |\bx - \bx'|,
$$
and the lemma is proved.
\end{proof}
\begin{remark}
The analogue of Lemma \ref{lem:LNE-complement} is also true when the ambient 
metric space is a connected Riemannian manifold $(M,g_M)$.
The proof of this more general case goes 
through similarly, using estimates of curves of length close enough to 
the outer/inner distance. 
Since we only use such a result in the Euclidean context, we 
presented the statement and proof in this simpler setting.
\end{remark}

\medskip
Let $(M,g_M)$ and $(P,g_P)$ be Riemannian manifolds. 
The manifold $M\times P$ is thus equipped
with the Riemannian product metric $g_{M\times P}:= g_M\otimes g_P$,
whose associated distance function is $d_M + d_P$.
Given a subset $S$ of $(M,g_M)$, we can speak of a Lipschitz mapping 
$F:(S,d_S) 
\to (P,d_P)$ with Lipschitz constant $K_F$:
$$
\bs,\bs' \;\in\; S \;\Longrightarrow \; 
d_P(F(\bs),F(\bs')) \; \leq \; K_F \cdot d_S(\bs,\bs') = K_F \cdot d_M 
(\bs,\bs').
$$
The space $S\times P$ comes naturally equipped with the product distance
$d_{S\times P} := d_S + d_P$. We conclude this section with the following very 
useful, though simple, result.
\begin{proposition}\label{prop:lipschitz-graph} 
Let $S$ be a LNE subset of $(M,g_M)$. If $F:(S,d_S) \to
(P,d_P)$ is a Lipschitz mapping, then its graph is LNE in $(M\times P,
d_{M \times P})$.
\end{proposition}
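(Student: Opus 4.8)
The plan is to show that the graph $\Gamma := \{(\bs, F(\bs)) : \bs \in S\}$ inherits the LNE property directly from that of $S$, by transferring rectifiable paths between $S$ and $\Gamma$ via the projection $\pi : M \times P \to M$ and its partial inverse $\bs \mapsto (\bs, F(\bs))$. First I would record the two elementary metric comparisons: for any $(\bs, F(\bs)), (\bs', F(\bs')) \in \Gamma$ we have $d_{M\times P}((\bs,F(\bs)),(\bs',F(\bs'))) \geq d_M(\bs, \bs')$ since $\pi$ is $1$-Lipschitz, and conversely, using that $F$ is $K_F$-Lipschitz with respect to $d_S = d_M|_{S\times S}$,
$$
d_{M\times P}\bigl((\bs,F(\bs)),(\bs',F(\bs'))\bigr) = d_M(\bs,\bs') + d_P(F(\bs),F(\bs')) \leq (1 + K_F)\, d_M(\bs,\bs').
$$
Thus the outer distance on $\Gamma$ is bi-Lipschitz equivalent, with constant $1 + K_F$, to the outer distance $d_S$ on $S$.

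Next I would compare the inner distances. Given a rectifiable path $\gm$ in $S$ joining $\bs$ to $\bs'$, its lift $\tilde\gm := (\gm, F\circ \gm)$ is a path in $\Gamma$; since $F$ is Lipschitz on $(S, d_S)$ and $d_S$ is itself the restriction of a length metric, $F\circ\gm$ is rectifiable with $\lgth(F\circ\gm) \leq K_F \lgth_{d_S}(\gm)$, and because $S$ sits isometrically (for the outer distance) inside $M$ one has $\lgth_{d_S}(\gm) = \lgth_{d_M}(\gm)$ for paths lying in $S$. Hence $\lgth(\tilde\gm) = \lgth(\gm) + \lgth(F\circ\gm) \leq (1+K_F)\lgth(\gm)$, giving $d_\inn^\Gamma((\bs,F(\bs)),(\bs',F(\bs'))) \leq (1+K_F)\, d_\inn^S(\bs,\bs')$. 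In the reverse direction, for any rectifiable path in $\Gamma$, its image under $\pi$ is a rectifiable path in $S$ of no greater length, so $d_\inn^S(\bs,\bs') \leq d_\inn^\Gamma((\bs,F(\bs)),(\bs',F(\bs')))$.

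Finally I would assemble the estimates. Using LNE-ness of $S$ with constant $L$, together with the four inequalities above,
$$
d_\inn^\Gamma\bigl((\bs,F(\bs)),(\bs',F(\bs'))\bigr) \leq (1+K_F)\, d_\inn^S(\bs,\bs') \leq (1+K_F) L\, d_S(\bs,\bs') \leq (1+K_F) L\, d_{M\times P}\bigl((\bs,F(\bs)),(\bs',F(\bs'))\bigr),
$$
which shows $\Gamma$ is LNE with constant $(1+K_F)L$.

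The only genuinely delicate point is the identification of the length of a path lying inside $S$ when measured in $(M, d_M)$ versus the fact that $F$ is assumed Lipschitz only for the metric $d_S$, not for some intrinsic length structure on $S$: one must be careful that "$F$ is $K_F$-Lipschitz on $(S,d_S)$" applied along the successive points of a fine partition of a rectifiable path $\gm$ indeed bounds the variation of $F\circ\gm$ by $K_F$ times the $d_M$-length of $\gm$, and that the length of $\tilde\gm$ in the product metric is the sum of the lengths of its two coordinate projections (which is immediate since $d_{M\times P} = d_M + d_P$). Once one notes that for a path $\gm$ with image in $S$ the $d_M$-length and the length induced by $d_S$ coincide (because $d_S$ is the restriction of $d_M$), everything is routine. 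I would phrase the whole argument in terms of these elementary length and partition estimates rather than invoking anything heavier.
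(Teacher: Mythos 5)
Your argument is correct and is essentially the paper's own proof: both lift a rectifiable path $\gm$ in $S$ to the graph and bound the length of the lift by $(1+K_F)\,\lgth(\gm)$, then chain this with the LNE constant of $S$ and the fact that the projection onto $M$ is $1$-Lipschitz. The extra care you take with the partition argument for $\lgth(F\circ\gm)$ and with the coincidence of $d_M$-length and $d_S$-length for paths in $S$ only makes explicit what the paper leaves implicit.
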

\begin{proof}
Since $F$ is Lipschitz there exists a positive constant $K_F$ such that
$$
\bs,\bs' \;\in\; S \;\Longrightarrow \; 
d_P(F(\bs),F(\bs')) \; \leq \; K_F \cdot d_S(\bs,\bs').
$$ 
Let $\gm:[0,1] \to S$ be a rectifiable curve and let $\gm_F:[0,1] \to S\times 
 P$ be the lift of $\gm$ to $G_F$, the graph of $F$. The result follows from
the following estimate
$$
{\rm length}_{M\times P}(\gm_F) \; \leq  (1+K_F) \cdot 
{\rm length}_M(\gm)
$$
where $\lgth_Q$ is the length taken w.r.t. to the Riemannian metric
$g_Q$, for $Q=M,\, M\times P$.
\end{proof}
\begin{remark}
When $M=\R^m,P=\Rp$, their product distance $d_M + d_P$
is not the Euclidean distance of $\R^{m+p}$, but it is equivalent to the 
Euclidean distance of $\R^{n+p}$, and this is how we will use Proposition 
\ref{prop:lipschitz-graph} in the complex affine context. 
\end{remark}
%
%
%
%
%
%
%
%
%
%
%
%
%
%
%
%
%
%
%
%
%
%
%
%
%
%
%
%
%
%
%
%
%
%
%
%
%
%
%
%
%
%
%
%
%
%
%
%
%
%
%
\section{A criterion for isolated singularity germs }\label{section:ACFISG}

Let $\bbo$ be the origin of the Euclidean space $\Rn$. If $S$ is any subset 
of $\Rn$ containing the origin, let $S^*$ be $S\setminus \bbo$.
\begin{lemma}\label{lem:transverse-submfds}
Let $(N_1,\bbo),\ldots,(N_s,\bbo),$ be germs at $\bbo$ of $C^1$ sub-manifolds 
of $\Rn$ of positive dimensions and codimensions. Assume that $(Y,\bbo) := 
\cup_{j=1}^s (N_j,\bbo)$ has an isolated singularity at $\bbo$. 
\\
\indent
The germ $(Y,\bbo)$ is locally LNE at $\bbo$ if and only if $T_\bbo N_j \cap 
T_\bbo N_k = \{\bbo\}$ for each $1\leq j<k\leq s$.
\end{lemma}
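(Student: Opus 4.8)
The plan is to prove both directions by reducing to local model computations near $\bbo$, using a good choice of local coordinates adapted to the tangent spaces $T_\bbo N_j$.

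For the \emph{necessity} direction, suppose two of the tangent spaces meet nontrivially, say $T_\bbo N_1 \cap T_\bbo N_2 \ni \bv \neq \bbo$. I would produce sequences $\bx_k \in N_1$ and $\by_k \in N_2$ tending to $\bbo$ along the direction $\bv$ (which is possible by the $C^1$ submanifold structure: $N_j$ is the graph of a $C^1$ map on $T_\bbo N_j$ with vanishing derivative at $\bbo$) with $|\bx_k - \by_k| = o(|\bx_k|)$, while any rectifiable path inside $Y = \cup N_j$ joining $\bx_k$ to $\by_k$ must either pass through $\bbo$ or travel through the other branches. Here the isolated-singularity hypothesis is crucial: $Y^* = Y \setminus \bbo$ has the $N_j^*$ as its (finitely many) connected components near $\bbo$, so a path in $Y$ from $\bx_k$ to $\by_k$ staying away from $\bbo$ cannot exist, forcing $d_\inn^Y(\bx_k,\by_k) \geq 2|\bx_k| - o(|\bx_k|) \gg |\bx_k - \by_k|$. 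This contradicts the LNE inequality, so the tangent spaces must intersect trivially.

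For the \emph{sufficiency} direction, assume $T_\bbo N_j \cap T_\bbo N_k = \{\bbo\}$ for all $j \neq k$. The idea is to show that, after a linear change of coordinates, on a small ball $\bB_r^n$ the distance from any point of $N_j^*$ to any point of $N_k^*$ ($j\neq k$) is comparable to the sum of their distances to $\bbo$ — i.e. the branches ``spread apart linearly.'' Concretely: since $T_\bbo N_j$ and $T_\bbo N_k$ are transverse (sum need not be everything, but intersection is $\{\bbo\}$), there is a constant $c>0$ with $|\bv - \bw| \geq c(|\bv| + |\bw|)$ for $\bv \in T_\bbo N_j$, $\bw \in T_\bbo N_k$; because each $N_j$ is tangent to $T_\bbo N_j$ at $\bbo$, up to shrinking $r$ the same estimate holds with $c/2$ for $\bx \in N_j \cap \bB_r^n$, $\by \in N_k \cap \bB_r^n$. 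Now given $\bx, \by \in Y \cap \bB_r^n$, if they lie on the same branch $N_j$ we use that $N_j$ is itself LNE near $\bbo$ (it is a $C^1$ submanifold, hence locally LNE by Corollary~\ref{cor:submfd-LNE} applied to a small closed ball intersection — or directly as a Lipschitz graph via Proposition~\ref{prop:lipschitz-graph}); if they lie on different branches, a path realizing (up to a factor) the inner distance is: go from $\bx$ along $N_j$ to $\bbo$, then from $\bbo$ along $N_k$ to $\by$. Its length is $\lesssim |\bx| + |\by| \lesssim |\bx - \by|$ by the spreading estimate. Combining the two cases with a finite maximum over the pairs of branches gives a uniform LNE constant on $Y \cap \bB_r^n$.

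The main obstacle I anticipate is making the ``go to $\bbo$ along $N_j$'' estimate genuinely uniform and clean: one must know that the inner distance from $\bx \in N_j^* \cap \bB_r^n$ to $\bbo$ inside $N_j$ is $O(|\bx|)$ with a constant independent of $\bx$, which requires controlling the $C^1$ graph representation of $N_j$ uniformly on a fixed ball (this is where shrinking $r$ and the $C^1$ hypothesis are spent), and then checking that concatenating two such controlled paths still lands inside $Y$ and has the claimed length. The codimension-positive hypothesis ensures each $T_\bbo N_j$ is a proper subspace so that transversality with trivial intersection is a nonvacuous constraint; the isolated-singularity hypothesis, used essentially in the necessity argument, also guarantees in the sufficiency argument that $Y\cap \bB_r^n$ is exactly the union of the pieces $N_j \cap \bB_r^n$ with no extra components, so the case analysis above is exhaustive.
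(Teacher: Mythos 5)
Your proposal is correct and follows essentially the same route as the paper: both directions rest on representing each $N_j$ as a $C^1$ graph over $T_\bbo N_j$ with vanishing derivative at $\bbo$, the sufficiency part on the uniform spreading estimate $|\bx-\by|\gtrsim |\bx|+|\by|$ for points on distinct branches (the paper phrases it via the distance $\dlt$ between the links $T_j\cap\bS_1^{n-1}$) combined with concatenating paths through $\bbo$ inside the individually LNE pieces $N_j\cap\bB_r^n$, and the necessity part on picking points over a common tangent direction that are $o(|\bp|)$ apart yet have inner distance at least $|\bx|+|\by|$ because any connecting path must pass through $\bbo$. The uniformity issue you flag is handled in the paper exactly as you anticipate, by invoking Corollary~\ref{cor:submfd-LNE} to get a common LNE constant for the truncated graphs.
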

\begin{proof}
For each $j=1,\ldots,s,$ let $T_j := T_\bbo N_j$ and let $T_j^\perp$ be the 
orthogonal supplement of $T_j$. Since each $(N_j,\bbo)$ is a $C^1$ 
sub-manifold, there exists a positive radius $r_Y > 0$, and a $C^1$-mapping  
$$
G_j : T_j \cap B_{2r_Y}^n \to T_j^\perp  
$$
such that a representative of $(N_j,\bbo)$ is the graph $\Gamma_j$ of $G_j$.
Since $D_\bbo G_j = \bbo$ and $G$ is $C^1$, up  to shrinking $r_Y$ the mapping $G_j$ is 
Lipschitz over $\bB_r^n$ for any radius $0<r\leq r_Y$, with Lipschitz 
constant tending to $0$ as $r$ goes to $0$. For $j=1,\ldots,s,$ we define
$$
N_j^\lr := \Gm_j \cap \bB_r^n.
$$
Whenever $r_Y$ is small enough, we can further assume that $r_Y$ is such 
that $N_j^\lr$ is LNE for any $r\leq r_Y$, since they are connected compact
$C^1$ sub-manifolds with boundaries
$$
\dd N_j^\lr = \Gm_j\cap \bS_r^{n-1}.
$$
Each $\dd N_j^\lr$ is diffeomorphic to $\bS^{\dim T_j - 1}$.

Given a pair $1\leq j<k \leq s$, and $r\leq r_Y$, let $\bx_l \in 
(N_l^\lr)^*$ for $l=j,k$. Let $r_l > 0$ be defined as follows:
$$
r_l := |\bx_l|.
$$ 
Let $2\aph \in [0,\pi]$  be the (non-oriented) angle between $\bx_j$ and 
$\bx_k$. From plane geometry classes we recall that
$$
\left| \frac{\bx_j}{r_j} - \frac{\bx_k}{r_k} \right| = 2 \sin \aph 
$$
and further remember that
\begin{equation}\label{eq:xj-xk-equal}
|\bx_j - \bx_k|^2 = (r_j-r_k)^2\cos^2\aph + (r_j+r_k)^2 \sin^2\aph.
\end{equation}
By definition of $\aph$, we deduce that 
\begin{equation}\label{eq:xj-xk-inequal}
(r_j + r_k) \sin \aph   \; \leq \; |\bx_j - \bx_k| 
\; \leq \; r_j + r_k.
\end{equation}

Finally let $Y_\lr := Y\cap \bB_r^{2n}$,  $d_\inn^\lr := d_\inn^{Y_\lr}$
and $d_j^\lr := d_\inn^{N_j^\lr}$, for $1\leq j\leq s$.

\medskip\noindent
$\bullet$ \em Assume that $T_j \cap T_k = \bbo$, for each pair $j\neq k$. \em

For $j=1,\ldots,s,$ we define the link of $T_j$ at $\bbo$
$$
S_j := T_j \cap \bS_1^{n-1}. 
$$
By hypothesis, we get 
\begin{equation}\label{eq:dist-SjSk}
\dlt := \min_{1\leq j<k\leq w} \dist(S_j,S_k) > 0.
\end{equation}
We can require that $r_Y$ is 
small enough such that for each $j=1,\ldots,s,$ and each $\bx_j \in 
(N_j^{\leq r_Y})^*$ the followings estimates hold true
\begin{equation}\label{eq:Sj-xj-normed}
\dist\left(S_j,\frac{\bx_j}{r_j}\right) \; \leq \;  \frac{\dlt}{4}\cdot
\end{equation}
For each $1\leq j<k\leq s$, each $\bx_j \in N_j^*$, $\bx_k\in N_k^*$, 
we also find the following estimates
\begin{equation}\label{eq:xjxk-normed}
\left| \frac{\bx_j}{r_j} - \frac{\bx_k}{r_k} \right|\geq 
\frac{\dlt}{2}\cdot
\end{equation}
By choice of $r_Y$ and $j,k$ we have 
$$
\left| \frac{\bx_j}{r_j} - \frac{\bx_k}{r_k} \right| = 2\sin(\aph)\; \geq 
\; \frac{\dlt}{2}.
$$
From Estimate \eqref{eq:xj-xk-inequal}, we deduce 
\begin{equation}\label{eq:xj-xk-origin}
\frac{\dlt}{4} \cdot (r_k+r_j) \leq |\bx_j - \bx_k| .
\end{equation}
as well as
\begin{equation}\label{eq:xl-origin}
|\bx_l| = r_l \leq \; \frac{4}{\dlt} \cdot |\bx_j - \bx_k|
\;\; {\rm for} \;\; l=j,k.
\end{equation}
Since $Y$ has an isolated singularity at $\bbo$, we obviously have 
$$
d_\inn^\lr (\bx_j,\bx_k) \leq d_j^\lr(\bx_j,\bbo) + d_k^\lr(\bbo,\bx_k), 
$$ 
combining Estimate \eqref{eq:xl-origin} with 
$N_j^\lr$ and $N_k^\lr$ being LNE, each with LNE constant $A$, yield an expected 
inequality
$$
d_\inn^\lr (\bx_j,\bx_k) \; \leq \; \frac{8A}{\dlt}\cdot|\bx_j -\bx_k|.
$$
Since each  $N_j^\lr$ is LNE, we obtain that $Y^\lr$ is LNE.

\medskip\noindent
$\bullet$ \em Assume that $(Y,\bbo)$ is locally LNE at $\bbo$. \em

Since $(Y,\bbo)$ has an isolated singularity at $\bbo$, we can assume
that $r_Y$ is small enough so that the following identity 
holds true
$$
j<k \; \Longrightarrow \; (N_j^{\leq r_Y})^* \cap (N_k^{\leq 
r_Y})^*
= \emptyset.
$$
Given any pair $1\leq j<k \leq s$, observe that to connect $\bx_j$ to $\bx_k$ 
within $Y^{\leq r_Y}$ it is necessary to go through $\bbo$. Therefore 
\begin{equation}\label{eq:too-long}
d_\inn^Y(\bx_j,\bx_k) \geq r_j + r_k
\end{equation}
Let $E := \{\{i_1,\ldots,i_t\}$ such that $1\leq i_1< \ldots < i_t\leq 
s,$ with $t\geq 2$ and $\dim \cap_{j=1}^t T_{i_j} \geq 1\}$. 

Assume that $E$ is not empty. Let $J$ be an element of $E$ and let 
$$
P_ J:= \cap_{j\in J} T_j.
$$
For any $\bp \in P_J$ and each $j \in J$, let $\bx_j$ be the point 
$(\bp,G_j(\bp))$ of $N_j$. Since $P_J$ is tangent at $\bbo$ to each $N_l$
and $D_\bbo G_l$ is null, for any $l \in  J$, as $\bp$ goes to $\bbo$ in 
$P_J$ we find that
$$
|\bx_j|,|\bx_k| = |\bp| + o(|\bp|) \;\; {\rm and} \;\;
|\bx_j - \bx_k| = o(|\bp|) 
$$
for any pair $j,k$ of $J$ with $j\neq k$. Comparing the last estimate 
with Estimate \eqref{eq:too-long} shows that the germ $(Y,\bbo)$ cannot 
admit a LNE representative in any neighbourhood of $\bbo$. Necessarily $E$ is empty.
\end{proof}
A careful reading of the proof of the only if part of Lemma 
\ref{lem:transverse-submfds} yields the following result about LNE 
representatives
\begin{corollary}\label{cor:LNE-representatives}
Let $Y$ be a representative of the germ $(Y,\bbo)$ of Lemma 
\ref{lem:transverse-submfds}. If $(Y,\bbo)$ is LNE, there exists a 
positive radius $r_Y$ such that for each radius $r\in (0,r_Y]$, the 
subsets $Y\cap \bB_r^n$ and $Y\cap B_r^n$ are LNE.
\end{corollary}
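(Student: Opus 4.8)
The plan is to rerun the first (``if'') direction of the proof of Lemma \ref{lem:transverse-submfds}, taking care that all the constants can be chosen uniformly in the radius. Since $(Y,\bbo)$ is LNE it is in particular locally LNE at $\bbo$, so the ``only if'' direction of Lemma \ref{lem:transverse-submfds} yields $T_\bbo N_j\cap T_\bbo N_k=\{\bbo\}$ for all $1\le j<k\le s$. After shrinking $r_Y$ so that in addition $Y$ coincides with $\bigcup_{j=1}^s\Gm_j$ on $\bB_{r_Y}^n$, we are exactly in the setting of the first bullet of that proof, whose notation we keep ($T_j$, $G_j\colon T_j\cap B_{2r_Y}^n\to T_j^\perp$, $\Gm_j$, $N_j^\lr=\Gm_j\cap\bB_r^n$, $S_j$, $\dlt$), and for every $r\in(0,r_Y]$ we have $Y\cap\bB_r^n=\bigcup_{j=1}^s N_j^\lr$.

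The only delicate point is to produce a single LNE constant $A$ valid for all the pieces $N_j^\lr$ with $r\le r_Y$ at once, the number $\dlt$ and Estimates \eqref{eq:dist-SjSk}--\eqref{eq:xl-origin} being already uniform since they only involve the $T_j$'s and points of $N_j^{\le r_Y}$. For this I would first shrink $r_Y$ so that each $G_j$ is Lipschitz on $T_j\cap\bB_{r_Y}^n$ with a constant $\kappa$ as small as we wish. Orthogonal projection $\Gm_j\to T_j$ then restricts to a homeomorphism $N_j^\lr\to D_j^r:=\{p\in T_j : |p|^2+|G_j(p)|^2\le r^2\}$ that is bi-Lipschitz with distortion in $[1,\sqrt{1+\kappa^2}]$ uniformly in $r$, so it suffices to bound the LNE constant of $D_j^r$ uniformly. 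Since $\kappa<1$, the map $\rho\mapsto\rho^2+|G_j(\rho\theta)|^2$ is strictly increasing, so $D_j^r$ is the radial graph over the unit sphere of $T_j$ of a $C^1$ function $\rho_j(\cdot,r)$ with $\rho_j(\cdot,r)=r\bigl(1+O(\kappa^2)\bigr)$ in the $C^1$ norm (by the implicit function theorem); radially rescaling the round ball of radius $r$ onto $D_j^r$ is then bi-Lipschitz with distortion tending to $1$ as $\kappa\to0$, so for $r_Y$ small enough every $D_j^r$, hence every $N_j^\lr$ with $r\le r_Y$, is LNE with a common constant $A$.

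With this in hand, the computation of the first bullet of the proof of Lemma \ref{lem:transverse-submfds} applies verbatim for each $r\in(0,r_Y]$ and gives that $Y\cap\bB_r^n$ is LNE with LNE constant $L:=\max\bigl(A,\tfrac{8A}{\dlt}\bigr)$, independent of $r$. The open ball case then follows formally: writing $Y\cap B_r^n=\bigcup_{0<\rho<r}Y\cap\bB_\rho^n$, for $\bx,\bx'\in Y\cap B_r^n$ pick $\rho$ with $\max(|\bx|,|\bx'|)\le\rho<r$; since $Y\cap\bB_\rho^n\subseteq Y\cap B_r^n$, any path realizing (up to $\ve$) $d_\inn^{Y\cap\bB_\rho^n}(\bx,\bx')$ also lies in $Y\cap B_r^n$, whence $d_\inn^{Y\cap B_r^n}(\bx,\bx')\le d_\inn^{Y\cap\bB_\rho^n}(\bx,\bx')\le L\,|\bx-\bx'|$, so $Y\cap B_r^n$ is LNE with constant $L$. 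I expect the single genuine obstacle to be the uniform-in-$r$ bound on the LNE constants of the branches $N_j^\lr$ isolated in the second paragraph; the remainder is a careful re-reading of the proof of Lemma \ref{lem:transverse-submfds}.
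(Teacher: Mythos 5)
Your proposal is correct and follows exactly the route the paper intends: the paper offers no separate proof of Corollary \ref{cor:LNE-representatives}, stating only that it follows from a careful reading of the proof of Lemma \ref{lem:transverse-submfds}, and your argument is precisely that re-reading, rerunning the transversality-implies-LNE bullet for every $r\le r_Y$ with the constants $\dlt$ and $A$ made uniform in $r$. Your second paragraph (the radial-graph comparison of $D_j^r$ with the round ball to get a single LNE constant for all the branches $N_j^{\le r}$) and the exhaustion argument for the open balls supply details the paper leaves implicit, and both are sound.
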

%
%
%
%
%
%
%
%
%
%
%
%
%
%
%
%
%
%
%
%
%
%
%
%
%
%
%
%
%
%
%
%
%
%
%
%
%
%
%
%
%
%
%
%
%
%
%
%
%
%
%
%
%
\section{The local case}\label{section:LC}

We work in the local complex analytic category. 

We present here Proposition \ref{prop:local-LNE}, a known local criterion
for a complex analytic curve germ to be LNE (see for instance 
\cite{NePi,DeTi}), with a complete proof. 
This result is used to obtain Proposition \ref{prop:compact} which fully 
characterizes complex curves which are LNE in a given compact complex 
manifold (independently of any prescribed continuous Riemannian metric).

\begin{convention} A curve germ is a complex analytic curve germ
at a prescribed point.
\end{convention}

\medskip
Let $(Y,\bbo)$ be a curve germ of $\Cn$ at $\bbo\in\Cn$. 
Let $m(Y,\bbo)$ be the multiplicity of $Y$ at $\bbo$.
We recall  that $Y$ is non-singular at $\by_0$ if and only if 
$m(Y,\by_0) = 1$.

Assume that $Y$ is irreducible at $\bbo$ and write $m$ for $m(Y,\bbo)$. 
Let $L$ be the tangent cone of $Y$ at $\bbo$, which is a 
complex line through $\bbo$. The restriction to $(Y,\bbo)$ of the 
(orthogonal) projection $\Cn \to L$ is a complex analytic finite
mapping $p_L:(Y,\bbo) \to (L,\bbo)$ inducing a holomorphic $m$-sheeted
covering $(Y^*,\bbo) \to (L^*,\bbo)$.
After an orthonormal change of complex coordinates we can assume 
that 
$$
L := \{\bx = (x_1,\ldots,x_n) \in \Cn \, : \, x_2 =\ldots = x_n = 0\}
= \C \times \bbo.
$$
From Puiseux Theorem, there exists a holomorphic map germ  
$F:(\C,0) \to (\C^{n-1}, \bbo)$ such that
\begin{equation}\label{eq:puiseux}
(Y,\bbo) = \{(s^m,F(s)) \, : \, s \in (\C,0)\}.
\end{equation}
More precisely, writing $F = (f_2,\ldots,f_n)$, for each $j=2,\ldots,n,$ we 
know that 
$$
f_j (s) = s^{a_j} \phi_j(s), \;\; {\rm with} \;\; \phi_j(0) \neq 0, 
\;\;  {\rm and} \;\; a_j \geq m+1 
$$
with the convention $a_j =\infty$ if and only if $f_j \equiv 0$. 
Moreover there exists no holomorphic map germ $H : (\C,0) \to 
(\C^{n-1}, \bbo)$ such that $F(s) = H(s^N)$, for a positive integer
$N\geq 2$ dividing $m$.

\bigskip
Let $(X,\bbo)$ be a curve germ of $(\Cn,\bbo)$. Let 
$(X_1,\bbo),\ldots,(X_s,\bbo),$ be its local irreducible components. 
For each $j=1,\ldots,s,$ let $L_j$ be the tangent cone of $X_j$ at $\bbo$,
thus a complex line through $\bbo$. 

For each $j=1,\ldots,s,$ the (orthogonal) projection $\pi_j 
:(X_j,\bbo) \to (L_j,\bbo)$ is a complex finite mapping and it induces the 
germ of a holomorphic cover $(X_j^*,\bbo) \to (L_j^*,\bbo)$ with 
$m_j := m(X_j,\bbo) \geq 1$ sheets.

\begin{remark} The length of any rectifiable curve of an Euclidean
space is taken w.r.t. the Euclidean metric.
\end{remark}

\medskip
The first principal result of this section is the following
\begin{proposition}\label{prop:local-LNE}
The germ of analytic curve $(X,\bbo)$ of $(\Cn,\bbo)$ admits a 
representative which is locally LNE at $\bbo$ if and only if the 
following conditions are satisfied: (i) $m_j = 1$ for each $j=1,\ldots,
s$, and (ii)  $L_j \cap L_k = \{\bbo\}$ for each pair $(j,k)$ such that 
$1\leq j < k\leq s$. In other words, the curve germ $(X,\bbo)$ is LNE if and only if it is a union of non-singular curve germs intersecting pairwise transversally.
\end{proposition}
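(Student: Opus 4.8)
The strategy is to reduce Proposition~\ref{prop:local-LNE} to the $C^1$ statement of Lemma~\ref{lem:transverse-submfds}, after replacing each branch by a bi-Lipschitz model in which it becomes a non-singular $C^1$ submanifold germ. For the ``only if'' direction I would argue by contrapositive. Suppose some $m_j \geq 2$; then using the Puiseux parametrization \eqref{eq:puiseux}, the branch $(X_j,\bbo)$ is parametrized by $s\mapsto(s^{m_j},F(s))$ with $\mathrm{ord}(F) \geq m_j+1$, so along this branch the inner distance from a point at parameter $s$ to $\bbo$ behaves like $|s|^{m_j}$ (comparable to the outer distance to $\bbo$), but two points at parameters $s$ and $\zeta s$, where $\zeta$ is a nontrivial $m_j$-th root of unity, have outer distance $\asymp |s^{m_j}-(\zeta s)^{m_j}| + |F(s)-F(\zeta s)|$; the first term vanishes and the second is $o(|s|^{m_j})$ since $\mathrm{ord}(F)>m_j$. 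Hence the ratio $d_\inn/d_S$ for this pair blows up, so $(X_j,\bbo)$ alone is not locally LNE, and neither is $(X,\bbo)$ (since inner distance in $X$ between these two points is at least the inner distance within the branch, which forces passage near $\bbo$). If instead all $m_j=1$ but $L_j\cap L_k\supsetneq\{\bbo\}$ for some $j\neq k$, the same computation as in the ``only if'' half of Lemma~\ref{lem:transverse-submfds} applies verbatim: taking points $\bx_j\in N_j$, $\bx_k\in N_k$ lying over a common point $\bp$ of the shared tangent line, one gets $|\bx_j|,|\bx_k|\sim|\bp|$ but $|\bx_j-\bx_k|=o(|\bp|)$, while the inner distance in $X$ is $\geq |\bx_j|+|\bx_k|$ because the branches meet only at $\bbo$; this kills LNE.

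For the ``if'' direction, assume $m_j=1$ for all $j$ and $L_j\cap L_k=\{\bbo\}$ for all $j<k$. Each branch $(X_j,\bbo)$ is then a non-singular complex analytic curve germ, hence a $C^1$ (indeed analytic) submanifold germ of real dimension $2$ in $\mathbb R^{2n}$, with real tangent plane the underlying real plane of $L_j$; the transversality $L_j\cap L_k=\{\bbo\}$ of complex lines translates into $T_\bbo N_j\cap T_\bbo N_k=\{\bbo\}$ as real planes. Moreover, since all branches pass through $\bbo$ and their pairwise intersections are transverse, on a small enough ball $X$ has an isolated singularity at $\bbo$ (the branches are pairwise disjoint away from $\bbo$). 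Thus the hypotheses of Lemma~\ref{lem:transverse-submfds} are met with $s$ the number of branches, each $N_j=X_j$, and positive dimensions and codimensions (here $2\leq 2n-2$, using $n\geq 2$; the case $n=1$ or $s=1$ with one smooth branch being trivially LNE by Corollary~\ref{cor:submfd-LNE}). Lemma~\ref{lem:transverse-submfds} then gives directly that $(X,\bbo)$ is locally LNE at $\bbo$, i.e. admits a locally LNE representative, which is exactly the claim.

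The step I expect to require the most care is the lower bound on $d_\inn^X$ used in the ``only if'' argument: one must be sure that for the chosen close pair of points the inner distance within $X$ is genuinely comparable to $|\bx_j|+|\bx_k|$ (or to $|s|^{m_j}$ in the singular-branch case), which relies on the fact that any rectifiable path in $X$ joining points on two different branches, or joining two Puiseux-conjugate points on a single branch, must pass through (a neighbourhood of) $\bbo$ and hence has length at least the distance out to those points. For the multibranch part this is immediate from the isolated singularity. For the single singular branch, one should note that removing a small ball around $\bbo$ disconnects the punctured branch $X_j^*$ into $m_j$ ``sheets'' over $L_j^*$, and the two conjugate points lie on different sheets (since $F(s)=H(s^N)$ is excluded), so any connecting path inside $X_j$ must re-enter the ball. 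A clean way to package all of this is to first project each branch $X_j$ orthogonally onto $L_j$ (which is bi-Lipschitz onto its image near $\bbo$, as in Corollary~\ref{cor:submfd-LNE}) when $m_j=1$, and in the singular case to compare directly with the model curve $\{(t^{m_j},0,\dots,0)\colon t\in\C\}$ via the Puiseux parametrization, which is bi-Lipschitz onto $X_j$; this reduces every metric estimate to elementary computations with powers of $|s|$. Everything else is bookkeeping already carried out in Lemma~\ref{lem:transverse-submfds}.
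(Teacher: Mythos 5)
Your overall architecture matches the paper's. The ``if'' direction is exactly the paper's reduction to Lemma~\ref{lem:transverse-submfds} (each smooth branch is a real $2$-dimensional $C^1$ submanifold germ, complex transversality of the tangent lines gives $T_\bbo N_j\cap T_\bbo N_k=\{\bbo\}$, and distinct branches meet only at $\bbo$). For the ``only if'' direction you choose the same test points as the paper (Puiseux-conjugate points in a fiber of the projection to the tangent line, resp.\ nearby points on two mutually tangent branches), you get the right outer-distance estimate $o(|x_1|)$, and the tangency case (``to change branches one must pass through $\bbo$'') is handled correctly.

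The gap is in the lower bound on $d_\inn$ in the case $m_j\geq 2$, which you yourself flag as the delicate step. You justify $d_\inn^X(\by_1,\by_2)\gtrsim |s|^{m_j}$ by asserting that any path joining the two conjugate points must pass through (a neighbourhood of) $\bbo$, and that removing a small ball around $\bbo$ ``disconnects the punctured branch $X_j^*$ into $m_j$ sheets''. Both assertions are false: $X_j^*$ is the homeomorphic image of a punctured disk under $s\mapsto(s^{m_j},F(s))$, so $X_j\setminus B_\ve^{2n}$ is an annulus, hence connected, and the arc $t\mapsto \bigl((s_0e^{it})^{m_j},F(s_0e^{it})\bigr)$, $t\in[0,2\pi/m_j]$, joins the two conjugate points while staying at distance comparable to $|s_0|^{m_j}$ from $\bbo$. (The parenthetical claim that the inner distance in $X$ is at least the inner distance within the branch is also backwards, since $X_j\subset X$.) The correct argument, which is what the paper's Claim~\ref{claim:LNE-L_1=L_2} uses, is a covering-space one: if a rectifiable path joining $\by_1$ and $\by_2$ avoids $\bbo$, its $1$-Lipschitz projection to $L^*$ is a loop based at $x_1$ with nontrivial monodromy for the $m_j$-sheeted covering $X_j^*\to L^*$, hence non-contractible in $L^*$, hence of length at least a constant times $|x_1|$; if the path does pass through $\bbo$, its length is at least $|\by_1|+|\by_2|$. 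Your proposed alternative packaging does not repair this: the Puiseux parametrization is not bi-Lipschitz from the model $\{(t^{m_j},0,\dots,0)\}$ onto $X_j$ --- that model is the line $L$, and $X_j$ being bi-Lipschitz to $L$ is precisely what the claim is refuting; the projection $X_j\to L$ is $m_j$-to-one near the conjugate points, not injective. Once the monodromy/winding argument is substituted for the ``must re-enter the ball'' step, the rest of your proof is sound and coincides with the paper's (the paper merely economizes in the tangent-branches case by applying the same single-tangent-line claim to the union $X_j\cup X_k$, which has multiplicity $2$, instead of repeating the argument of Lemma~\ref{lem:transverse-submfds}).
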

\begin{proof}
Let $\lbd_1,\ldots,\lbd_t,$ be complex lines of $\Cn$ through the origin.
We define the following ``angle''
$$
\dlt((\lbd_j)_{j=1,\ldots,t}) := \min_{1\leq j < k \leq t} \left \{|\bu_j -
\bu_k | : \bu_j \in \lbd_j\cap\bS_1^{2n-1} \; {\rm and} \; \bu_k \in \lbd_k 
\cap \bS_1^{2n-1} \right \}.
$$
Condition (ii) holds true if and only if $\dlt((L_j)_{j=1,\ldots,s}) >0$.

\medskip\noindent
$\bullet$ \em Assume that (i) and (ii) are satisfied. \em

\smallskip
Condition (i) means that each $(X_j,\bbo)$ is a non-singular germ. 
Condition (ii) means that the lines $L_j$s are pairwise transverse. 
This is the setting of Lemma \ref{lem:transverse-submfds}.

\medskip\noindent
$\bullet$ \em Assume that $X$ admits a representative locally LNE at
$\bbo$. \em

First we show the following property.
\begin{claim}\label{claim:LNE-L_1=L_2}
Let $(Y,\bbo)$ be a curve germ whose tangent cone at $\bbo$ consists of 
the single line $L$. If $m(Y,\bbo) \geq 2$, then the germ 
$(Y,\bbo)$ cannot admit any representative
locally LNE at $\bbo$. 
\end{claim}
\begin{proof}
For simplicity let $m := m(Y,\bbo)$ and assume that $L = \C\times \bbo 
\subset \Cn$. Let $\pi_L : \Cn \to L$ be the projection onto $L$. 
Therefore the map germ induced by the restriction of $\pi_L$ to $Y^*$
$$
\pi : (Y^*,\bbo) \to (L^*,0)
$$
is a $m$-sheeted holomorphic covering. Let $x_1 \neq 0$ and let 
$\by_1,\ldots,\by_m$ be the points of $\pi^{-1}(x_1)$.

Assume that $m\geq 2$. Since $\by_j = (x_1,\ups_j)$ for each $j=1,\ldots, m$, 
using Puiseux Parametrization \eqref{eq:puiseux} for each irreducible 
component of $(Y,\bbo)$, we check there exists a positive integer $p$ such
that 
$$
|\ups_j | \leq const  |x_1|^\frac{p+1}{p}, \;\; \forall \;
1\leq j \leq m
$$
where $const$ is a positive constant we do not need to denote 
specifically.    

If to connect the points $\by_1 $ and $\by_2$ in $Y$ we go through  
$\bbo$, we find.
\begin{equation}\label{eq:too-long-X}
d_\inn^X(\by_1,\by_2) \geq |\by_1| + |\by_2|.
\end{equation}
When a rectifiable path $\gm :[0,1] \to Y^*$ connects the points $\by_1$ and 
$\by_2$, its projection $\gm_1$ onto $L$ is contained in $L^*$ and goes
from $x_1$ to $x_1$. Since $\gm_1$ is rectifiable and not contractible
in $L^*$, we deduce
$$
2|x_1| \leq \lgth(\gm_1) < \lgth(\gm).
$$
In any case we have obtained that $d_\inn^Y(\by_1,\by_2) \geq 2|x_1|$,
while $|\by_1 - \by_2| = o(|x_1|)$. Therefore the germ $(Y,0)$ cannot admit
a representative locally LNE at $\bbo$.
\end{proof}
Claim \ref{claim:LNE-L_1=L_2} implies that $m_j = 1$ for each $j=1,\ldots,s,$ 
and also that the tangent cones $L_j$ are pairwise transverse. 
\end{proof}

\medskip
A by-product of Proposition \ref{prop:local-LNE} is a criterion for a 
complex 
curve of a compact complex manifold to be LNE presented in Proposition
\ref{prop:compact} below.

\medskip
Let $(M,g_M)$ be a Riemannian manifold. Let $\psi:\cU \to 
B_2^m$ be a $C^1$ chart. The subset $U_1 := \psi^{-1}(\bB_1^m)$ is a 
compact sub-manifold with boundary of $\cU$ and of $M$. Let 
$$
\be_1 := \psi^*(eucl|_{\bB_1^m})
$$
the pull-back of the Euclidean metric restricted to $\bB_1^m$. Since 
$\psi_1 :=\psi|_{U_1} : (U_1,d_{U_1}) \to (\bB_1^m,|-|)$ is bi-Lipschitz
(it is $C^1$), the metric space structures obtained from the
Riemannian metrics $h_1:=g_M|_{U_1}$ and $\be_1$ are 
equivalent. Thus $U_1$ is LNE in $(M,g_M)$.
\begin{lemma}\label{lem:localLNE-same}
The manifold $M$ is locally LNE (w.r.t. $g_M$) at $\bx_0 \in \cU$ 
if and only if $\psi(\cU)$ is locally LNE at $\bbo$ (w.r.t. $eucl$).  
\end{lemma}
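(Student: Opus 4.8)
The plan is to prove Lemma \ref{lem:localLNE-same} by reducing the question about $(M,g_M)$ near $\bx_0$ to a question about $(\psi(\cU),eucl)$ near $\bbo$, using the fact already established in the excerpt that $\psi_1 : (U_1,d_{U_1}) \to (\bB_1^m,|-|)$ is bi-Lipschitz and that bi-Lipschitz homeomorphisms preserve the LNE property. First I would observe that local LNE-ness at $\bx_0$ is a property that only depends on an arbitrarily small neighbourhood of $\bx_0$: by Definition \ref{def:loc-LNE}, $M$ is locally LNE at $\bx_0$ iff there is some open neighbourhood $\cV$ of $\bx_0$ with $\cV$ (as a subset of $M$) LNE. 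So I may freely shrink $\cU$ around $\bx_0$, and correspondingly shrink $\psi(\cU)$ around $\bbo$; in particular I can arrange that $\overline{\cV} \subset U_1$ for the competing neighbourhoods on the $M$-side, and that the corresponding Euclidean neighbourhoods sit inside $\bB_1^m$.

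Next I would spell out the transfer. On the chart $U_1$ the two metric space structures $(U_1, d_{U_1})$ (from $g_M$) and $(U_1, \psi_1^* |-|)$ are equivalent, because $\psi_1$ is a $C^1$ diffeomorphism between compact sets, hence bi-Lipschitz — this is exactly the sentence preceding the lemma. Equivalently, there is a constant $C>1$ with $\tfrac{1}{C}|\psi(\bx)-\psi(\by)| \le d_M(\bx,\by) \le C|\psi(\bx)-\psi(\by)|$ for $\bx,\by \in U_1$, and the same two-sided bound for the inner distances of any subset, since inner distance is defined through lengths of paths and $\psi_1$ distorts lengths by a bounded factor. Therefore, for any subset $S \subset U_1$: $S$ (with the metric from $g_M$) is LNE iff $\psi(S)$ (with the Euclidean metric) is LNE, with LNE constants differing by at most a factor $C^2$. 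Applying this with $S$ ranging over small neighbourhoods of $\bx_0$ inside $U_1$ — whose images are exactly small neighbourhoods of $\bbo$ inside $\bB_1^m$, by continuity of $\psi$ and $\psi^{-1}$ — gives: there exists a small $g_M$-LNE neighbourhood of $\bx_0$ iff there exists a small Euclidean-LNE neighbourhood of $\bbo$ in $\psi(\cU)$. That is precisely the claimed equivalence.

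I do not expect any serious obstacle here; the lemma is essentially the bookkeeping statement that \emph{local LNE-ness is a bi-Lipschitz invariant and is local}. The one point deserving a careful sentence is the behaviour of inner distance under a bi-Lipschitz change of metric: one must note that $d_\inn^S$ for $S \subset U_1$ computed with $g_M$ and computed after pushing forward by $\psi_1$ differ only by the factor $C$, because any rectifiable path in $S$ has its $g_M$-length and its $(\psi_1^*|-|)$-length comparable by $C$, and taking infima preserves the comparison; there is also the matter that a path realising (almost) the inner distance in $U_1$ might want to leave $U_1$, but since we have shrunk so that the relevant points lie well inside $U_1$ and near $\bx_0$, and since $d_\inn^S \ge d_S$ forces near-optimal paths to stay in a small neighbourhood when the endpoints are close, one restricts attention to paths inside $U_1$ at no cost. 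With that remark in place the proof is a short two-paragraph argument quoting Lemma \ref{lem:locally-LNE}-style reasoning and the displayed bi-Lipschitz estimate already in the text.
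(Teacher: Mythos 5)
Your proposal is correct and follows essentially the same route as the paper: the paper's proof consists of the single remark that $\psi_1:(U_1,d_{U_1})\to(\bB_1^m,|-|)$ is bi-Lipschitz (so the metric structures induced by $h_1=g_M|_{U_1}$ and $\be_1=\psi^*(eucl)$ are equivalent), combined with the compactness of $U_1$ and Lemma \ref{lem:locally-LNE}. You merely make explicit the two points the paper leaves implicit — that a bi-Lipschitz map distorts inner distances of subsets by the same factor as lengths of paths, and that local LNE-ness only depends on arbitrarily small neighbourhoods — so no genuinely different idea is involved.
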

\begin{proof}
We can assume that $\psi(\bx_0) = \bbo$.
The previous observation about $h_1$ and $\be_1$, the compactness 
of $U_1$ and Lemma \ref{lem:locally-LNE} altogether imply the statement.
\end{proof}
\begin{definition}\label{def:complex-curve}
Let $M$ be a complex manifold. A complex curve of $M$ is a complex 
analytic subset of $M$, thus closed, and which is of local (complex) 
dimension $1$ at each of its points.
\end{definition}
When the complex manifold $M$ is compact, any complex curve $X$ of $M$ is 
compact. Therefore the set $X_\sing$ of singular points of $X$ is either empty
or consists of finitely many points.  

\medskip
Following point (ii) of Remark \ref{rem:compact-manifold}, since any 
compact complex manifold can be equipped with a $C^\infty$ Hermitian 
structure, we will not refer to the Riemannian structure in the next 
statement, the other main result of this section.
\begin{proposition}\label{prop:compact}
A complex curve $X$ of a compact complex manifold $M$ is LNE if and only if 
$X$ is connected and is locally LNE at each point of its singular locus.
\end{proposition}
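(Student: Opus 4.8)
The plan is to reduce the statement to a purely local question via the compactness criterion of Lemma~\ref{lem:locally-LNE}, and then resolve that local question using Corollary~\ref{cor:submfd-LNE} at smooth points and Proposition~\ref{prop:local-LNE} at singular points. Since $M$ is compact and $X$ is a complex analytic (hence closed) subset, $X$ is compact, so by Lemma~\ref{lem:locally-LNE} (applicable once we know $X$ is connected) the curve $X$ is LNE if and only if it is connected and locally LNE at every point of $\clos(X)=X$. Thus the whole content is to show that \emph{local} LNE-ness of $X$ at a point is automatic at smooth points and is governed at singular points exactly by the condition stated. The necessity direction is immediate: if $X$ is LNE then it is locally LNE everywhere, in particular at every singular point; and $X$ must be connected, since a disconnected $X$ has pairs of points at finite outer distance but infinite inner distance, violating Definition~\ref{def:LNE}.

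For sufficiency, assume $X$ is connected and locally LNE at each of its finitely many singular points. First I would handle the smooth points: if $\bx_0\in X\setminus X_{\sing}$, then a small enough neighbourhood meets $X$ in a complex submanifold, which after passing through a $C^1$ chart of $M$ (using Lemma~\ref{lem:localLNE-same} to transfer the question to the Euclidean model) is a $C^1$ submanifold of $\R^m$; by the submanifold case of Corollary~\ref{cor:submfd-LNE} it is locally LNE at $\bx_0$. Hence $X$ is locally LNE at every smooth point with no hypothesis needed. At a singular point $\bx_0$, local LNE-ness holds by assumption. Therefore $X$ is locally LNE at every point of $X$, and Lemma~\ref{lem:locally-LNE} yields that $X$ is LNE with respect to any (hence some fixed $C^\infty$ Hermitian) Riemannian structure on $M$; by Remark~\ref{rem:compact-manifold}(ii) the choice of metric is irrelevant.

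The one point requiring a little care, and the main (mild) obstacle, is the interface between the abstract manifold $M$ and the Euclidean statements of Section~\ref{section:G-MP}: the criteria there (Corollary~\ref{cor:submfd-LNE}, Proposition~\ref{prop:local-LNE}) are phrased in $\Rn$ or $\Cn$, so at each point $\bx_0$ one must pick a $C^1$ (or holomorphic) chart $\psi$ of $M$ around $\bx_0$ and observe, via Lemma~\ref{lem:localLNE-same}, that local LNE-ness of $X$ at $\bx_0$ with respect to $g_M$ is equivalent to local LNE-ness of $\psi(X)$ at $\psi(\bx_0)$ with respect to the Euclidean metric. Once this translation is in place, the smooth-point case is Corollary~\ref{cor:submfd-LNE} applied to the submanifold $\psi(X)$, and nothing else is needed. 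I do not expect any genuine difficulty; the proposition is essentially a packaging of Lemma~\ref{lem:locally-LNE}, Corollary~\ref{cor:submfd-LNE}, Lemma~\ref{lem:localLNE-same} and the local smoothness of $X$ away from the finite singular locus, together with the trivial observation that connectedness is forced by finiteness of the inner distance.
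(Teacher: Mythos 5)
Your proposal is correct and follows essentially the same route as the paper: necessity is immediate from the definitions, and sufficiency reduces via Lemma~\ref{lem:locally-LNE} to checking local LNE-ness everywhere, which holds at singular points by hypothesis and at smooth points by Corollary~\ref{cor:submfd-LNE} transferred through charts via Lemma~\ref{lem:localLNE-same}. The only cosmetic difference is that the paper makes the smooth-point step explicit by covering $X$ with compact pieces $X_{j,r}$, $T_{j,r}$ and $E_r$ (so that Corollary~\ref{cor:submfd-LNE}, stated for \emph{compact} submanifolds with boundary, applies verbatim), whereas you invoke the local LNE-ness of submanifolds pointwise, which is exactly the intermediate conclusion established inside the proof of that corollary.
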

\begin{proof}
If $X$ is LNE, it is connected and it is locally LNE at each of its points
by Definition \ref{def:loc-LNE}.

\medskip
Let $m$ be the complex dimension of $M$.
Assume that $X$ is connected and is locally LNE at each of its singular 
points. Let $X_\sing := \{\bx_1,\ldots,\bx_s\}$. 
For each $j=1,\ldots,s,$ let 
$$
\psi_j :\cU_j \to \cV_m := B_2^{2m} \subset \C^m
$$
be a complex chart of $M$ centred at $\bx_j$ such that $\psi_j(\bx_j) = 
\bbo$. The image $Y_j := \psi(X\cap \cU_j)$ is a complex analytic set of 
$\cV_m$ of local complex dimension $1$ at each of its points. There
exists a positive radius $r_j < 2$ such that for every positive radius
$r\leq r_j$, the curve  
$$
Y_j\cap \bS_r^{2m-1} 
$$
is a non-singular real analytic compact curve consisting of $d_j\geq 1$ 
connected components (see \cite{Mil} for instance). Let 
$$
r_X := \min_{j=1,\ldots,s} r_j.
$$
We can assume that each intersection $\clos(\cU_k) \cap \clos(\cU_l)$ is 
empty for any pair $k,l$ with $k\neq l$. From Lemma \ref{lem:localLNE-same}, 
we can also assume that for each $j=1,\ldots,s,$ and for each positive 
radius $r\leq r_X$ the compact subset 
$$
X_{j,r} := X\cap \psi_j^{-1}(\bB_r^{2m})
$$
is LNE. By choice of $r_X$ we deduce that for each positive radius $r\leq 
r_X$, the subset 
$$
E_r := X\setminus \cup_{j=1}^s \psi_j^{-1}(B_r^{2m})
$$
is a compact real analytic sub-manifold with boundary 
$$
\dd E_r = \cup_{j=1}^s (X \cap \psi_j^{-1}(\bS_r^{2m-1})).
$$
Thus any connected component of $E_r$ is LNE. Observe that
for any $r \leq \frac{2r_X}{3}$ the subset
$$
T_{j,r} := X \cap \psi_j^{-1}(\bB_r^{2m} \setminus B_\frac{r}{2}^{2m})
$$
is a compact real analytic sub-manifold with boundary, consisting of $d_j$ 
connected components, each diffeomorphic to the cylinder $\bS^1 \times 
[\frac{r}{2},r]$. In particular each connected component of $T_{j,r}$ is
LNE. 
Since any point of $X$ is an interior point of $X_{j,r}$, $T_{j,r}$ or 
$E_r$, for some radius $r \leq \frac{2}{3} r_X$, $X$ is locally LNE at
each of its points by Lemma \ref{lem:localLNE-same}.
We  conclude the proof applying again Lemma \ref{lem:locally-LNE}.
\end{proof}
%
%
%
%
%
%
%
%
%
%
%
%
%
%
%
%
%
%
%
%
%
%
%
%
%
%
%
%
%
%
%
%
%
%
%
%
%
%
%
%
%
%
%
%
%
%
%
%
%
%
\section{Preliminaries at infinity}\label{section:PICn}
Let $\Cn$ be the complex affine space of dimension $n$ and let $\bx$
be some affine complex coordinates.

\begin{convention}
Unless mentioned otherwise, being LNE is to be 
understood as being LNE as a subset of $\Cn$ w.r.t. the Euclidean metric. 
\end{convention}

\medskip
We will need the following result, consequence of Lemma 
\ref{lem:LNE-complement}
\begin{claim}\label{claim:Cn-Br}
For any positive radius $R$, the subsets $\Cn \setminus B_R^{2n}$ and 
$\Cn \setminus \bB_R^{2n}$ are LNE.
\end{claim}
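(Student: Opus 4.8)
The statement to prove is Claim \ref{claim:Cn-Br}: for any $R>0$, both $\Cn\setminus B_R^{2n}$ and $\Cn\setminus \bB_R^{2n}$ are LNE in $\Cn$ equipped with the Euclidean metric. The natural approach is to reduce these two statements to Lemma \ref{lem:LNE-complement}, which asserts that the connected components of $\R^N\setminus N$ and of $\R^N\setminus(N\setminus\dd N)$ are LNE whenever $N$ is a compact $C^1$ submanifold of $\R^N$ of full dimension $N$ with $C^1$ boundary. So first I would take $N := \bB_R^{2n}\subset\R^{2n}$, the closed Euclidean ball of radius $R$ centred at the origin, which is a compact $C^1$ submanifold of $\R^{2n}$ of dimension $2n$ whose boundary $\dd N = \bS_R^{2n-1}$ is $C^1$ (indeed real-analytic). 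Then $\Cn\setminus B_R^{2n} = \R^{2n}\setminus(N\setminus\dd N)$ and $\Cn\setminus \bB_R^{2n} = \R^{2n}\setminus N$, so Lemma \ref{lem:LNE-complement} applies directly to each.

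The one gap to fill is that Lemma \ref{lem:LNE-complement} as stated speaks of each \emph{connected component} being LNE, whereas Claim \ref{claim:Cn-Br} asserts the whole set is LNE. But here the relevant set is connected: for $N=\bB_R^{2n}$, the complement $\R^{2n}\setminus N$ is connected (its unique unbounded component is everything, since removing a closed ball from $\R^{2n}$, $2n\geq 2$, leaves a connected set), and likewise $\R^{2n}\setminus(N\setminus\dd N) = \R^{2n}\setminus B_R^{2n}$ is connected because it equals the closure of $\R^{2n}\setminus N$. Hence "each connected component is LNE" is literally "the set is LNE" in this instance, and no bridging argument across components is needed.

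Thus the proof is essentially three lines: identify $N$, check its regularity, observe the complements coincide with the two sets in the claim, note connectedness, and invoke Lemma \ref{lem:LNE-complement}. I do not anticipate a genuine obstacle; the only thing to be slightly careful about is the dimension convention (writing $\R^{2n}$ for $\Cn$ as a real manifold, matching the paper's notation $n_\K = \dim_\R\Kn$ and the ball notation $B_R^{2n}$, $\bB_R^{2n}$), and making sure to state explicitly that the unbounded component exhausts the complement so that the per-component conclusion of Lemma \ref{lem:LNE-complement} upgrades to the global statement. One could also remark that this is exactly the degenerate case of Lemma \ref{lem:LNE-complement} with $\dd E$ connected, so the constant $\dlt$ there is $\infty$ and the estimate simplifies, but invoking the lemma as a black box is cleaner.
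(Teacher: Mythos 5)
Your proposal is correct and matches the paper's intent exactly: the paper states Claim \ref{claim:Cn-Br} as an immediate consequence of Lemma \ref{lem:LNE-complement} applied to $N=\bB_R^{2n}$, without writing out the details you supply. Your observation that the complement is connected (so the per-component conclusion of the lemma gives the global statement) is the only point worth spelling out, and you handle it correctly.
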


\bigskip
Let $\bP^n := \field{CP}^n$ be the complex projective space with projective
coordinates $[z_1:\ldots:z_{n+1}]$. Let $\bH_\infty$ be the projective 
hyperplane $z_{n+1}=0$.

The affine space $\Cn$ embeds in $\bP^n$ as $\bx  \mapsto [\bx:1]$, that
is $\bP^n = \Cn \sqcup \bH_\infty$. The hyperplane $\bH_\infty$ is called
the hyperplane at infinity and any point of $\bH_\infty$ is called \em a 
point at infinity. \em

If $S$ is any subset of $\bP^n$, the \em affine part $S^a$ of $S$ \em is 
defined as
$$
S^a := S\cap \Cn = S \setminus \bH_\infty.
$$
The subset $S$ is \em affine \em if $S = S^a$.

When $S$ is affine, the germ $(S,\infty) := (S,\bH_\infty)$ is well defined
as a germ in $\Cn$, and so is the germ $(S,\lbd)$ for $\lbd \in \bH_\infty$.

\medskip
Let $\lbd$ be a point of $\bH_\infty$. If $(Y,\lbd)$ is a (complex analytic) curve germ at $\lbd$, we will denote by $Y$ any representative of the germ $(Y,\lbd)$ in $\bP^n$. 
We are only interested in representatives of $(Y,\lbd)$ whose intersection 
with $\bH_\infty$ is just $\lbd$, property which we will always implicitly 
assume true.

Assume that $(Y,\lbd)$ is irreducible at $\lbd$ and is not contained in 
$\bH_\infty$. Thus the tangent cone to $Y$ at $\lbd$ is a line 
$L$ of the vector space $T_\lbd \bP^n$. The line $L$ is either transverse
to $\bH_\infty$ or contained in  $T_\lbd \bH^\infty$.
After a linear change of coordinates in $\Cn$, we can assume that $\lbd := 
[1:0:\ldots:0]$. Let $\cA_1$ be the affine chart of $\bP^n$ defined as 
$\{z_1\neq 0\}$. Let $[1:\bw:z]$ be affine coordinates in
$\cA_1$ so that in $\bH_\infty\cap\cA_1 = \{z=0\}$. We further write
$\bw = (w_2,\bw') \in \C\times\C^{n-2}$.
We identify $T_\lbd \bP^n$ with $\cA_1$ as well.

Since $\bx$ are the affine coordinates of $\Cn$, in $\Cn\cap \cA_1$
we write $[x_1:\by:1] = [\bx:1] = [1:\bw:z]$. 

If $L$ is contained in $\{z=0\}$, after a linear transformation of $\{z=0\}$ 
in $\cA_1$, corresponding to a linear transformation of $\{x_1 = 0\}$ in 
$\Cn$, we can further require that 
\begin{equation}\label{eq:L-tangent}
L = \{[1:w_2:0:\ldots:0] \, : \,  w_2\in \C\}.
\end{equation}
The line $L$ is transverse to $\bH_\infty$ at $\lbd$ if and only if there 
exists $\ba\in \C^{n-1}$ such that 
$$
L = \{ [1:z \ba:z] \, : \, z\in \C\}.
$$
The affine translation $\bx \to (x_1,\by - \ba)$ in  $\Cn$, corresponding
to the linear change of coordinates $(\bw,z) \to (\bw',z) = (\bv,z) = 
(\bw - z\ba,z)$ in  $\cA_1$, shows that the line $L$ has the following 
equation
\begin{equation}\label{eq:L-transverse}
L = \{[1:0:\ldots:0:z] \, : \, z \in \C \}. 
\end{equation}
Let $(s,\bw',t)$ be either $(w_2,\bw',z)$ or $(z,\bw',w_2)$ for $w_2,z,$ 
as appearing in the simplified equations 
\eqref{eq:L-tangent} and \eqref{eq:L-transverse}
of $L$, so that $L = \{t = \bw' 
= 0\}$. 

The multiplicity $m := m(Y,\lbd)$ is preserved by this change of 
coordinates to obtain the simpler equation of $L$. Let 
$$
Y^* := Y\setminus \lbd \;\; {\rm and} \;\; L^* := L\setminus \lbd.
$$
The projection $p_L :(Y^*,\lbd) \to (L^*,\lbd)$ is a holomorphic 
$m$-sheeted covering. 

\medskip
We introduce the following avatar at infinity of being locally LNE at a point
(see also \cite{FeSa}).
\begin{definition}\label{def:LNE-infty}
A subset $S$ of $\Cn$ is \em locally LNE at infinity \em (w.r.t. the 
euclidean metric of $\Cn$) if there exists a compact subset $K$ of $\Cn$ such 
that $S \setminus K$ is LNE (w.r.t. the 
euclidean metric of $\Cn$).
\end{definition}
The second non-compact result about being LNE is the following
\begin{lemma}\label{lem:LNE-transverse}
Assume $L$ is transverse to $\bH_\infty$. The germ $(Y^a,\lbd)$ is locally LNE 
at infinity if and only if $m=1$.
\end{lemma}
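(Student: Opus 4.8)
The plan is to work in the affine chart $\cA_1=\{z_1\neq 0\}$ with the simplified coordinates from \eqref{eq:L-transverse}, i.e.\ the tangent cone $L$ to $(Y,\lbd)$ is $\{[1:0:\ldots:0:z]:z\in\C\}$, so in the $(s,\bw',t)$-coordinates we have $L=\{t=\bw'=0\}$ with $t$ playing the role of the affinization variable $z$ ($t=z$ means $\bH_\infty=\{z=0\}$ is transverse to $L$). The guiding idea is: \emph{transversality of $L$ to $\bH_\infty$ means that, near infinity, moving radially in $\Cn$ corresponds to moving along the covering variable $t$ of $p_L$; so the covering $p_L:(Y^*,\lbd)\to(L^*,\lbd)$ becomes, after the inversion $\Cn\setminus K\simeq$ a punctured neighbourhood of $\bH_\infty$, essentially the same Puiseux picture as in the local case}, and the obstruction to LNE is again multiplicity $\geq 2$, exactly as in Claim~\ref{claim:LNE-L_1=L_2}.

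First I would set up the Puiseux parametrization of $(Y,\lbd)$ adapted to $L$. Since $p_L$ is an $m$-sheeted cover and $L$ is transverse to $\bH_\infty$, after the coordinate changes described in the text we may write a representative of $(Y,\lbd)$ as $\{(t^m,\Psi(t),t):t\in(\C,0)\}$ for a holomorphic germ $\Psi:(\C,0)\to(\C^{n-2},\bbo)$ — wait, more precisely with the covering variable being $t=z$, so $Y=\{(w_2,\bw',z): w_2=z^m\psi_2(z)^{?},\ \bw'=\cdots\}$; the point is that $Y$ is the graph, over the $z$-disk, of functions vanishing to order $\geq m$ in an appropriate sense, and the sheets $\by_1,\ldots,\by_m$ over a value $z_0$ are distinguished by $m$-th roots. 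Then I would translate back to the affine chart $\Cn$: a point $(w_2,\bw',z)\in\cA_1$ with $z\neq 0$ corresponds to the affine point $\bx=(x_1,\by)=(1/z,\bw/z)$, so $|\bx|\sim |z|^{-1}\to\infty$ as $z\to 0$, and the passage to infinity is the inversion $z\mapsto 1/z$. Under this inversion, distances between two sheet-points over the same small $z_0$, and the inner distance forced to run through infinity (the image of $\lbd$), are computed exactly as in the local proof.

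For the \emph{if} direction ($m=1$): when $m=1$ the germ $(Y,\lbd)$ is a non-singular curve germ transverse to $\bH_\infty$, and its affine part $Y^a$ near infinity is, after inversion, a $C^1$ (indeed analytic) graph of a Lipschitz map over a neighbourhood of infinity in a line. I would make this precise by exhibiting $Y^a\setminus B_R^{2n}$, for $R$ large, as (bi-Lipschitz equivalent to) the graph of a Lipschitz map $F$ defined on $\C\setminus B_{R'}^2\subset L^a$ — here using that transversality to $\bH_\infty$ means the line-coordinate on $L$ is a good parameter near infinity and that the fibre-coordinates, after inversion, have bounded derivative (this is where the $m=1$, i.e.\ the graph being over the $z$-variable with the right vanishing order, is used). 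Then Claim~\ref{claim:Cn-Br} gives that $\C\setminus B_{R'}^2$ (equivalently $L^a\setminus B_{R'}^{2n}$, which is LNE as a complement of a ball inside a $2$-plane, by Lemma~\ref{lem:LNE-complement}) is LNE, and Proposition~\ref{prop:lipschitz-graph} upgrades the graph of $F$ to LNE in a product metric equivalent to the Euclidean one (per the Remark after Proposition~\ref{prop:lipschitz-graph}). Hence $Y^a$ is locally LNE at infinity.

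For the \emph{only if} direction ($m\geq 2$ forbidden): I would mimic Claim~\ref{claim:LNE-L_1=L_2} almost verbatim. Pick a small $z_0\neq 0$ and the $m\geq 2$ sheet-points $\by_1,\by_2\in p_L^{-1}(z_0)$; in the affine chart these become points $\bx^{(1)},\bx^{(2)}$ with $|\bx^{(1)}|,|\bx^{(2)}|\sim|z_0|^{-1}=:\rho\to\infty$. Using the Puiseux expansions one checks $|\bx^{(1)}-\bx^{(2)}|=o(\rho)$ (the sheets differ only in the fibre directions and by $m$-th-root-type terms, which after inversion are lower order than $\rho$). On the other hand, any rectifiable path in $Y^a$ joining $\bx^{(1)}$ to $\bx^{(2)}$ either passes "through infinity" — and then projects onto a path in $L^a$ from $\bx^{(1)}$-component to itself that is not contractible in $L^a\setminus B_R^{2n}$, forcing length $\gtrsim\rho$ — or stays bounded and still must wind, again giving $d^{Y^a}_\inn(\bx^{(1)},\bx^{(2)})\geq c\rho$ for a uniform $c>0$; here I would run the winding/noncontractibility argument on the projection $p_L$ restricted to the near-infinity region, exactly as the inner-distance lower bound $2|x_1|$ was obtained in the proof of Claim~\ref{claim:LNE-L_1=L_2}. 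Comparing $d_\inn\gtrsim\rho$ with $d_{\text{outer}}=o(\rho)$ as $z_0\to 0$ shows $Y^a$ cannot be LNE outside any compact set, so it is not locally LNE at infinity.

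\textbf{Main obstacle.} The delicate point is the bookkeeping of the two coordinate changes and the inversion simultaneously: verifying that "transverse to $\bH_\infty$" really does make the projection-to-$L$ variable behave like a \emph{radial} coordinate near infinity (so that the Puiseux/multiplicity picture transfers cleanly), and controlling that the fibre-directions, after inverting $z\mapsto 1/z$, contribute only lower-order terms both to the outer distance between sheets and to the curvature/Lipschitz bound needed for the graph argument in the $m=1$ case. In particular, one must be careful that the constant $\rho$ governing the size of the points and the lower bound on $d_\inn$ match up with genuine estimates on the inverted Puiseux series, rather than just order-of-magnitude heuristics; and that the non-contractibility argument for the projected loop uses the correct "core" of the near-infinity region of $L^a$.
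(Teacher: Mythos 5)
Your proposal is correct and takes essentially the same route as the paper: for $m=1$ the affine part near infinity is exhibited as a Lipschitz graph over $\C\setminus \bB_R^2$ and handled by Claim~\ref{claim:Cn-Br} together with Proposition~\ref{prop:lipschitz-graph}, while for $m\geq 2$ two sheet-points over the same base value have outer distance $o(|x_1|)$ but any connecting path projects to a non-contractible loop in $\C\setminus\bB^2$, forcing inner distance at least of order $|x_1|$. The only quibble is your phrase about a path ``passing through infinity'' (a path in $Y^a$ stays in $\Cn$); the paper's version of this step is simply that a short connecting path must remain in the covering region, where the non-contractibility of its projection gives the length lower bound.
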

\begin{proof}
In this case we have $(s,t) = (z,w_2)$, and using parametrization 
\eqref{eq:puiseux} we find
$$
(Y,\lbd) = \left\{[1:F(z):z^m] = [1:O(z^p):z^m] \, : \, z\in (\C,0) 
\right\}
$$
for a positive integer $p\geq m+1$, maximal for this property.
In this case $F \equiv 0$ would mean $p=\infty$, which corresponds to
the case $Y^a = L$.
Observe also that from the affine point of view we check that 
$$
(Y^a,\lbd) = \left\{\left(x_1^m,x_1^m F(x_1^{-1})\right) =
\left(x_1^m, O\left(x_1^{-(p-m)}\right)\right) \, 
: \, x_1 \in (\C,\infty) \right\}
$$
so that if $(x_1,\by) \in (Y^a,\infty)$, then $\by \to \bbo$ as $x_1 \to 
\infty$. Let us consider the following holomorphic map germ $(\C,\infty)
\to (\C^{n-1},0)$ 
$$
x_1 \to G(x_1):= x_1^m F(x_1^{-1}) =  O\left(x_1^{-(p-m)}\right). 
$$

\medskip\noindent
$\bullet$ Assume $m=1$. The germ $(Y,\lbd)$ is non-singular at $\lbd$,
$$
\bw = z^p\aph(z) 
$$
where either $p\geq 2$ and $\aph(z)\in\C\{z\}^{n-1}$ with $\aph(0) \neq 
\bbo$, or $(Y,\lbd) = (L,\lbd)$.
This presentation is equivalent to the following affine one 
$$
\by = G(x_1).
$$
Since the components of $G$ are power series in $x_1^{-1}$ converging for 
$|x_1| \geq R_1>0$,  $G$ goes to $\bbo$ 
as $x_1$ goes to infinity. Thus its derivative goes to $\bbo$
as $x_1$ goes to infinity, therefore
it is a Lipschitz mapping $\C\setminus \bB_R^{2} \to \C^{n-1}$ for any radius
$R\geq R_1$. Proposition \ref{prop:lipschitz-graph} and Claim 
\ref{claim:Cn-Br}
imply that the subset $\{(x_1,G(x_1) \, : \; |x_1| \geq R\}$ is LNE 
since it is a Lipschitz graph over a LNE subset. Therefore the germ 
$(Y^a,\lbd)$ is locally LNE at infinity.

\medskip\noindent
$\bullet$ Assume $m\geq 2$. 
Let $z\in \C^*$ and let $\zt$ be a $m$-th root of $z$. Let 
$\bx_0 := [1:F(\zt):z]$ and $\bx_1 := [1:F(\omg \zt):z]$ for 
$\omg = e^{i\frac{2\pi}{m}}$. 
By parametrization \eqref{eq:puiseux}, we find $F(\zt) \neq F(\omg\zt)$.  
Writing 
$$
\bx_0^a := \left( \frac{1}{z},\frac{F(\zt)}{z} \right) \in \Cn \;\;
{\rm and} \;\; \bx_1^a := \left( \frac{1}{z},\frac{F(\omg\zt)}{z} \right)
\in \Cn 
$$
for $|z|$ small enough we obtain
\begin{equation}\label{eq:x1-x0}
|\bx_1^a - \bx_0^a| =  \left|\frac{F(\zt)}{z} - \frac{F(\omg\zt)}{z}
\right | \leq const. |z|^{\frac{p}{m} - 1} 
= const. \frac{1}{|x_1|^q} \;\; {\rm for} \;\; q:= \frac{p-m}{m} > 0.
\end{equation}
We can take a representative of $Y^a$  outside a large Euclidean 
closed ball $\bB^{2n}$ centred at $\bbo$ such that the projection
mapping
$$
\pi : Y^a \to \C\setminus \bB^2, \;\; [x_1:\by:1]  \mapsto x_1
$$
is a $m$-sheeted holomorphic covering, where $\bB^2$ is an Euclidean
ball of $\C$ centred at $\bbo$ image of $\bB^{2n}$ under the projection
onto the $x_1$-axis.

\smallskip
Let $\gm : [0,1] \to Y^a$ be a rectifiable path from $\bx_0:=\gm(0)$
to $\bx_1:=\gm(1)$. We write $\gm(t) = (x_1(t),\by(t))$.
The path $\pi \circ \gm: t\mapsto x_1(t)$ 
is well defined over $[0,1]$, and is a loop since $\pi(\bx_0) = \pi( \bx_1) 
\in \C^*$. Therefore it is not contractible in $\C \setminus \bB^2$.
We obtain   
$$
\lgth(\pi\circ\gm) := \int_0^1 |x_1'(t)| \rd t \, \leq \, \lgth(\gm). 
$$
Since $\pi\circ\gm$ is not contractible in $\C \setminus \bB$
the following estimate is satisfied
$$
|x_1(0)| \gg 1 \; \Longrightarrow \; 
|x_1(0)| \; \leq \; \lgth(\pi\circ\gm) \; \leq \; \lgth(\gm).
$$
Combining this last estimate with Estimate \eqref{eq:x1-x0}, which implies
that $|\bx_1^a - \bx_0^a|$ goes to $0$ as $|x_1(0)|$ goes to $\infty$, shows that
no representative of the germ $(Y^a,\lbd)$ can be locally LNE at infinity.
\end{proof}
The next result presents the case of tangency to $\bH_\infty$. Its proof will
follow from arguments similar to those used for the transverse case (see also
\cite{FeSa} for a similar situation). 
\begin{lemma}\label{lem:notLNE-tangent}
If $L$ is contained in $T_\lbd \bH_\infty$, then the germ $(Y^a,\lbd)$ is 
not locally LNE at infinity.
\end{lemma}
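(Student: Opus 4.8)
Being in the tangent case we have $(s,\bw',t)=(w_2,\bw',z)$, so that the Puiseux parametrization \eqref{eq:puiseux} of $(Y,\lbd)$ reads $w_2=s^m$, $(\bw',z)=F(s)$, with $z=f_n(s)=s^a\phi(s)$, $\phi(0)\neq 0$ and, since $Y^a\neq L$ (i.e. $Y$ is not contained in $\bH_\infty$), $m+1\leq a<\infty$; moreover every component $f_j$ of $F$ has order $a_j\geq m+1$. Passing to affine coordinates $x_1=1/z$, $\by=\bw/z$, this gives, on a representative of $(Y^a,\lbd)$, the parametrization $x_1(s)=s^{-a}\phi(s)^{-1}$, $y_2(s)=s^{-(a-m)}u(s)$ with $u(0)\neq 0$, and $y_j(s)=s^{a_j-a}u_j(s)$ with $u_j(0)\neq 0$ (or $y_j\equiv 0$) for the remaining coordinates. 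In particular, just as in the proof of Lemma \ref{lem:LNE-transverse}, but now with the exponent $a\geq m+1\geq 2$ in place of $m$, the projection $\pi:Y^a\to\C$, $[x_1:\by:1]\mapsto x_1$, restricts to an $a$-sheeted holomorphic covering of $\{|x_1|>R\}$ for $R$ large, with connected total space because $(Y,\lbd)$ is irreducible at $\lbd$.

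First I would fix a small $s_0$, and let $\bx_0^a,\bx_1^a$ be two distinct points of $Y^a$ lying in the same fibre $\pi^{-1}(x_1(s_0))$: concretely the sheets through the parameter values $s_0$ and (the point of that fibre near) $\omg s_0$, where $\omg:=e^{i 2\pi/a}$ — after a biholomorphic change of the Puiseux parameter one may assume $x_1=s^{-a}$ exactly, and then these are $s_0$ and $\omg s_0$. Then I would estimate the inner distance from below exactly as in Lemma \ref{lem:LNE-transverse}: for any compact $K\subset\Cn$ and any rectifiable path $\gm$ in $Y^a\setminus K$ joining $\bx_0^a$ to $\bx_1^a$, the loop $\pi\circ\gm$ lies in $\C\setminus\bB_R^2$ for a fixed large $R$ and is non‑contractible there (its endpoints sit in different sheets of the connected covering $\pi$), whence $\lgth(\gm)\geq\lgth(\pi\circ\gm)\geq |x_1(s_0)|$ once $|s_0|$ is small enough; thus $d_\inn^{Y^a\setminus K}(\bx_0^a,\bx_1^a)\geq |x_1(s_0)|\asymp |s_0|^{-a}$.

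It remains to bound the outer distance from above. The $x_1$‑coordinates of $\bx_0^a$ and $\bx_1^a$ coincide; the $y_2$‑coordinates differ by at most $C_2|s_0|^{-(a-m)}$; and for each remaining coordinate $y_j$ the difference is $O(|s_0|^{a_j-a})$ when $a_j\leq a$ and bounded otherwise, hence $O(|s_0|^{-(a-m)})$ since $a_j\geq m+1$ forces $a-a_j<a-m$ (and $a-m\geq 1$). Therefore $|\bx_0^a-\bx_1^a|\leq C\,|s_0|^{-(a-m)}$, so that
$$
\frac{d_\inn^{Y^a\setminus K}(\bx_0^a,\bx_1^a)}{|\bx_0^a-\bx_1^a|}\;\geq\; c\,|s_0|^{-m}\;\xrightarrow[\,s_0\to 0\,]{}\;\infty ,
$$
with $m\geq 1$. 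As this holds for every compact $K$ (choosing $|s_0|$ small depending on $K$), no $Y^a\setminus K$ is LNE, i.e. $(Y^a,\lbd)$ is not locally LNE at infinity. The one point requiring care is the outer estimate: one must track all the Puiseux orders $a_j$ of the components transverse to $L$ and use that each is strictly larger than $m$, so that the $w_2$‑coordinate (of order exactly $m$) dominates the spreading of the fibre and yet grows strictly slower than $|x_1|$; everything else is a rerun of the transverse argument of Lemma \ref{lem:LNE-transverse}, together with Claim \ref{claim:Cn-Br} and Proposition \ref{prop:lipschitz-graph} being unnecessary here since we only need the lower bound on lengths of non‑contractible loops.
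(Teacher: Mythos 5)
Your proposal is correct and follows essentially the same route as the paper's proof: both use the Puiseux parametrization at infinity to see that the projection onto the $x_1$-axis is a $p$-sheeted covering with $p\geq m+1\geq 2$ (your $a$ is the paper's $p$), bound the outer distance between two points of a fibre by $O(|x_1|^{(p-m)/p})$ using that every Puiseux order $q_j$ exceeds $m$, and bound the inner distance below by $|x_1|$ via the non-contractibility of the projected loop, exactly as in the $m\geq 2$ case of Lemma \ref{lem:LNE-transverse}. The only cosmetic difference is that you work with the parameter at $0$ and invert, while the paper reparametrizes directly at infinity.
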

\begin{proof}
In this case we have $(s,t) = (w_2,z)$. 
Using again parametrization \eqref{eq:puiseux} we find
$$
(Y,\lbd) = \left\{[1:w_2^m:F(w_2)] = [1:w_2^m:O(w_2^{m+1})] \, : \, 
w_2\in (\C,0) \right\}.
$$
Since the curve germ is not contained in $\bH_\infty$, there exist a
positive integer $p\geq m+1$, a function $\vp \in \C\{w_2\}$ 
satisfying $\vp(0) \neq 0$ such that
$$
z(w_2) = w_2^p\cdot \vp(w_2),
$$
writing $F = (\bw',z) = (w_3,\ldots,w_n,z)$. For each $j=3,\ldots,n,$ there
exists an exponent $q_j \in\N_{\geq m+1}\cup\infty$ and a function $\psi_j \in 
\C\{w_2\}$ 
such that
$$
w_j(w_2) = w_2^{q_j} \cdot \psi_j(w_2)
$$
where $\psi_j(0) \neq 0$ if the function $w_j$ is not the null function,
equivalently if $q_j < \infty$. 
Since $\vp(0) \neq 0$, we can re-parametrize the affine part $(Y^a,\lbd)$ as
$$
(Y^a,\lbd) = \left\{[s^p:O(|s|^{p-m}):1]  \, : \, 
s \in (\C,\infty) \right\}.
$$
From the affine point of view, since $p-m > p - q_j$ for all $j$, we get
$$
(x_1,\by) \in Y^a \; \Longrightarrow \; 
|\by| \leq const. |x_1|^r \;\; {\rm for} \;\; 
r:= \frac{p-m}{p} \in ]0,1[
$$
when $|x_1|$ is large enough.

We can take a representative of $Y^a$ outside a large Euclidean closed 
ball $\bB^{2n}$ centred at $\bbo$ so that, by parametrization 
\eqref{eq:puiseux}, the projection mapping 
$$
\pi : Y^a \setminus \bB^{2n} \to \C\setminus \bB^2, \;\; 
[x_1:\by:1] \mapsto x_1
$$
is a $p$-sheeted holomorphic covering whenever $|x_1|$ is large enough, 
where $\bB^2$ is Euclidean ball of $\C$ centred at $\bbo$ image of 
$\bB^{2n}$ under the projection onto the $x_1$-axis. Let $\bx,\bx'$ in 
$Y^a$ such that 
$$
\pi(\bx) = \pi (\bx') = x_1.
$$
Since $p\geq m +1 \geq 2$, we assume that $\bx \neq \bx'$.
If $|x_1|$ is large enough, we find 
$$
|\bx|, \, |\bx'| = |x_1|(1+ o(1))
$$
as well as the following estimate
$$
|\bx - \bx'| = o(|x_1|)  
\ll |x_1|.
$$
From here on, we conclude as in the proof of the case $m\geq 2$ of Lemma
\ref{lem:LNE-transverse}.
\end{proof}
%
%
%
%
%
%
%
%
%
%
%
%
%
%
%
%
%
%
%
%
%
%
%
%
%
%
%
%
%
%
%
%
%
%
%
%
%
%
%
%
%
%
%
%
%
%
\section{Unbounded part of affine curves}\label{section:UPoAC}
This section continues what was initiated in Section \ref{section:PICn},
but working with explicit representatives of germs of infinity.

\begin{convention} A curve in $\Cn$ is an affine algebraic curve. 
A curve in $\bP^n$ is a projective algebraic curve.
\end{convention}

\bigskip
Let $X$ be a curve of $\bP^n$ and let $X^\infty$ be the intersection 
$X\cap \bH^\infty$. Let $\deg(X)$ be the degree of $X$. 
\begin{remark}\label{rmk:mult-inters-numbaffine-curve-LNE-infty}
1) If $X$ has no irreducible component contained in $\bH_\infty$, then 
$\deg(X) = \deg(X^a)$. 
\\
2) As a consequence of B\'ezout Theorem, if $card(X^\infty) = \deg(X)$, 
then for each $\lbd\in X^\infty$, the germ $(X,\lbd)$ is non-singular at 
$\lbd$ and is transverse to $\bH_\infty$ at $\lbd$.
\end{remark}
We assume that $X^\infty$ consists of finitely many points 
$\lbd_1,\ldots,\lbd_p,$ with $p = card (X^\infty) \leq \deg(X)$. 

\medskip
The germ of the affine part $X^a$ at infinity is 
$$
(X^a,\infty) = \sqcup_{j=1}^p (X^a,\lbd_j).
$$
Given any positive radius $R$, let 
$$
X_R := X \cap \bS_R^{2n-1} = X^a \cap \bS_R^{2n-1}.
$$
Using the Euclidean inversion $\iota_{2n} :\bx \mapsto 
|\bx|^{-2}\cdot\bx$, the Euclidean closure of the image of $\iota_{2n}
(X^a\setminus \bbo)$ is a real algebraic 
set of $\R^{2n} \supset \iota_{2n}(\Cn)$, with an isolated singularity at 
$\bbo$. The Local Conic Structure at $\bbo$ when combined with the
inversion yields a Conical Structure Theorem at infinity, which states 
that there exists a positive radius $R_X$ such that for $R,R'\geq R_X$
the links $X_R$ and $X_{R'}$ are $C^\infty$ diffeomorphic. Moreover
for $R\geq R_X$ the sub-manifold with boundary 
$$
X_\gR^a := X^a \setminus B_R^{2n}
$$ 
is $C^\infty$ diffeomorphic to the cylinder $X_R \times [R,+\infty[$, mapping 
$X_r$ onto $X_R \times r$. Let $\cC$ be a connected component of $X_\gR^a$. 
Therefore the closure of $\cC$ in $\bP^n$ intersects the hyperplane at 
infinity $\bH_\infty$ in a single point $\lbd_\cC$. Two such connected 
components 
$\cC_1$, $\cC_2$ may accumulate at the same point at infinity: 
$\lbd_{\cC_1} = \lbd_{\cC_2}$.

\medskip
For $R\geq R_X$, let $\cC_1^R,\ldots,\cC_e^R,$ be the connected components of
$X_\gR^a$. Thus $card(X^\infty) \leq e\leq \deg(X)$.
A first key piece to the main result is the following
\begin{proposition}\label{prop:LNE-unbounded}
Let $X^a$ be an affine curve and let $X$ be its projective closure. Assume 
that $\deg(X) = card(X^\infty)$. Then, there exists a positive radius $R_X'$
$\geq R_X$ such that for each $R\geq R_X'$, the connected components 
$\cC_j^R$ are LNE, $j=1,\ldots,\deg(X)$.
\end{proposition}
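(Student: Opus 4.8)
The plan is to prove that each connected component $\cC_j^R$ is LNE \emph{separately}, by exhibiting it — after a change of coordinates adapted to the relevant point at infinity — as a Lipschitz graph over an LNE planar domain, and then applying Proposition~\ref{prop:lipschitz-graph}. First I would exploit the hypothesis $\deg(X)=\operatorname{card}(X^\infty)$: by Remark~\ref{rmk:mult-inters-numbaffine-curve-LNE-infty}(2) it forces the germ $(X,\lbd)$ to be non-singular and transverse to $\bH_\infty$ at every $\lbd\in X^\infty=\{\lbd_1,\dots,\lbd_p\}$, $p=\deg(X)$; being non-singular, such a germ is irreducible, so $e=p$ and, for $R\ge R_X$, exactly one component of $X_\gR^a$ accumulates at each $\lbd_j$, which we relabel $\cC_j^R$. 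It then suffices to produce, for each $j$, a radius $R_j\ge R_X$ with $\cC_j^R$ LNE for all $R\ge R_j$, and to set $R_X':=\max_j R_j$. One cannot replace this by a single assertion about $X_\gR^a$ as a whole: for $p\ge2$ that set is disconnected and two of its components lie at infinite mutual inner distance, so it is never LNE.

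Fix $j$ and pass to the coordinates adapted to $\lbd_j$ of Section~\ref{section:PICn}: a complex-affine (hence bi-Lipschitz) automorphism $\Phi_j$ of $\Cn$ puts us in the transverse case $m=m(X,\lbd_j)=1$ of Lemma~\ref{lem:LNE-transverse}. Its proof supplies $\rho_j>0$ and a holomorphic map $G_j\colon\{x_1\in\C:\,|x_1|\ge\rho_j\}\to\C^{n-1}$ with $G_j\to\bbo$ and $DG_j\to\bbo$ at infinity — in particular Lipschitz — such that a suitably small representative of $\Phi_j\big((X^a,\lbd_j)\big)$ is the graph of $G_j$. For $R$ large, $\Phi_j(\cC_j^R)$ is then the graph of the restriction of $G_j$ to the planar domain
$$
U_{j,R}:=\big\{\,x_1\in\C\ :\ |x_1|\ge\rho_j,\ |\Phi_j^{-1}(x_1,G_j(x_1))|\ge R\,\big\},
$$
connectedness of $\cC_j^R$ forcing it to be the whole such graph once $R$ is large.

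The crux is that $U_{j,R}$ is LNE for $R$ large. Let $\psi(x_1):=|\Phi_j^{-1}(x_1,G_j(x_1))|^2$. Since $\Phi_j^{-1}$ is complex-affine and $G_j,DG_j\to\bbo$, one gets $\psi=Q+o(|x_1|^2)$ and $|\nabla\psi(x_1)|\ge|\nabla Q(x_1)|-o(|x_1|)-O(1)$ with $Q$ a positive-definite real quadratic form on $\C\cong\R^2$; hence $\psi\to+\infty$ and $\nabla\psi\ne\bbo$ off a compact set. So for $R$ large, $R^2$ is a regular value of $\psi$, the level set $\{\psi=R^2\}$ is a compact $C^1$ curve contained in $\{|x_1|>\rho_j\}$, and the compact region $N_{j,R}$ it bounds together with the disk $\{|x_1|\le\rho_j\}$ is a compact $C^1$ submanifold of $\R^2$ with $C^1$ boundary, of which $U_{j,R}$ is a connected component of the complement. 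Lemma~\ref{lem:LNE-complement} then yields that $U_{j,R}$ is LNE. Since $G_j$ is Lipschitz over this LNE domain, Proposition~\ref{prop:lipschitz-graph} (together with the remark that the product metric on $\C\times\C^{n-1}$ is equivalent to the Euclidean metric of $\Cn$) shows $\Phi_j(\cC_j^R)$ is LNE, whence $\cC_j^R$ is LNE because $\Phi_j$ is bi-Lipschitz. Choosing $R_j$ large enough for all of the above and setting $R_X':=\max_j R_j\ (\ge R_X)$ completes the argument.

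The step I expect to be the genuine obstacle is the analysis of $U_{j,R}$, that is, the interface between the Euclidean ball $B_R^{2n}$ of the original coordinates and the coordinates adapted to $\lbd_j$: under $\Phi_j$ the ball becomes an ellipsoid, and one must check that its complement cuts the graph of $G_j$ along a domain genuinely of the shape ``$\R^2$ minus a compact $C^1$ region'', so that Lemma~\ref{lem:LNE-complement} applies — it would \emph{not} suffice to sandwich $U_{j,R}$ between the LNE strips $\{|x_1|\ge\rho\}$, since being LNE is not inherited by subsets, which is precisely why the regular-value analysis of $\psi$ is needed. The bookkeeping behind the identity $\Phi_j(\cC_j^R)=\mathrm{graph}(G_j|_{U_{j,R}})$ — connectedness, smallness of the germ representative, and the absence of extra pieces in $\cC_j^R$ — is routine once $R$ is taken large.
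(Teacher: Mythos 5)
Your proposal is correct and follows essentially the same route as the paper's proof: both use B\'ezout to get non-singularity and transversality at infinity, present $\cC_j^R$ as a Lipschitz holomorphic graph over the complement in the tangent line of a compact $C^1$ region, invoke Lemma~\ref{lem:LNE-complement} to see that this planar domain is LNE, and conclude with Proposition~\ref{prop:lipschitz-graph}. Your regular-value analysis of $\psi$ is just a more explicit justification of the step the paper states directly, namely that the projection $p_j(\dd\cC_j^R)$ of the boundary is a smooth Jordan curve bounding a compact (possibly non-convex) disk whose complement is LNE.
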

\begin{proof}
Let $d := \deg(X)$.
Let $i(X,\lbd)$ be the local intersection multiplicity of the germ $(X,\lbd)$
with $(\bH_\infty,\lbd)$ (see \cite{Dra}). 
Since $\bH_\infty$ is non-singular, we deduce that 
$i(X,\lbd) \geq m(X,\lbd)$ for any $\lbd\in X^\infty$. The hypothesis means
that 
$$
d = \sum_{\lbd\in X^\infty} i(X,\lbd),
$$ 
and thus $1 = i(X,\lbd) \geq m(X,\lbd) \geq 1$. Therefore the curve germ 
$(X,\lbd)$ is non-singular and is in general position with $\bH_\infty$ at 
$\lbd$ for each $\lbd\in X^\infty$, that is transverse to $\bH_\infty$. 
Thus we are in the hypotheses of Lemma \ref{lem:LNE-transverse}.

Let $X^\infty = \{\lbd_1,\ldots,\lbd_d\}$. 
Assume that $(X^a,\lbd_j) = (\cC_j^R,\lbd_j)$ for each $j=1,\ldots,d,$
and each $R\geq R_X$. 

Let $L_j$ be the line tangent to $X$ at $\lbd_j$. Since it is transverse
to $\bH_\infty$ we can consider $L_j$ as an affine line of $\Cn$. Let
$\be_j$ be a unit vector of $\Cn$ such that the point $\lbd_j \in \bH_\infty$
corresponds to the complex line $\C\be_j$. 
Let $H_j$ be the complex hyperplane orthogonal to $\C\be_j$. 
Since $L_j$ is transverse to $\bH_\infty$ there exists $\ba_j \in H_j$ such 
that
$$
L_j = \C\be_j + \ba_j.
$$
\begin{claim}\label{claim:bilipschitz}
$\cC_j^R$ is bi-Lipschitz homeomorphic to $L_j\setminus B_R^{2n}$ with respect to the outer distance.
\end{claim}
\begin{proof}[Proof of the claim]
Similarly to the proof of Lemma \ref{lem:LNE-transverse}, we deduce that 
$(\cC_j^R,\lbd_j)$ is a Lipschitz and holomorphic graph over
$\C \setminus B_{r_j}^2 = \C\be_j\setminus B_{r_j}^2$ for each 
$j=1,\ldots,d$. Moreover the Lipschitz constant tends to $1$ as $r_j$ goes
to $\infty$. Let 
$$
r_X:= \max_{j=1,\ldots,d} r_j.
$$
For each $R\geq r_X$, let 
$$
Y_j^R := \{(s,G_j(s)) = s\be_j + G_j(s) \in \C\be_j \times H_j: 
s\in \C\setminus B_R^2\}
$$
where $G_j$ is the holomorphic mapping taking values in $\C^{n-1} = H_j$ 
we mentioned. We recall that 
$$
G_j(s) = g_j \left(\frac{1}{s}\right) \;\; {\rm where} \;\; 
g_j(t) \in \C\{t\}^{n-1} \;\; {\rm and} \;\; g_j(0) = \ba_j.
$$
Therefore, the following estimate holds true
$$
\lim_{|s|\to \infty} \; G_j'(s) = \bbo.
$$
Since $G_j$ goes to $\ba_j$ and its derivative goes to $\bbo$ as $s$ goes
to $\infty$, for every 
positive $\ve$, there exists $r_\ve \geq \max (R_X,r_X)$ such that 
$$
x_1,x_1' \in \C\be_j \setminus B_{r_\ve}^2 \; \Longrightarrow \; 
|G_j(x_1) - G_j(x_1')| < \ve |x_1 - x_1'| \;\; {\rm and} \;\;
|G_j(x_1)|^2 < |\ba_j|^2+\ve^2.
$$
Thus the mapping $\Gm_j: s \mapsto s\be_j + G_j(s))$ is holomorphic
over $\C \setminus B_{r_\ve}^2$ and is Lipschitz, with Lipschitz constant 
smaller than $(1+\ve)$. 

Since $\Gm_j$ is bi-holomorphic, up to taking a 
smaller $\ve$ we find that $p_j := \Gm_j^{-1}$, that is the projection onto
$\C\be_j$, is Lipschitz with constant larger than $(1-\ve)$. Natural 
embedding of $\C\be_j \setminus B_{r_\ve}^2$ into $\C^n$ yields the 
first part of the proof. Last, there is just to recall that the orthogonal 
projection onto an affine space is Lipschitz.
\end{proof}

In the notation of the proof above, if $\ve$ and $r_\ve$ are given, we find that for 
each $j=1,\ldots,d,$ the following inclusions are satisfied
$$
Y_j^{R_{j,\ve}} \; \subset \;\cC_j^{R_{j,\ve}} \; \subset \;  Y_j^R,
\;\; {\rm where} \;\; R_{j,\ve} := \sqrt{R^2 + |\ba_j|^2 + \ve^2}
$$
whenever $R\geq r_\ve$. Under this choice of $\ve$ we get
$$
\dd \cC_j^{R_{j,\ve}} \cap Y_j^{R_{j,\ve}} = \dd Y_j^R \cap 
\cC_j^{R_{j,\ve}} = \emptyset.
$$
where $\dd \cC_j^{R} = \cC_j^{R}\cap \bS_{R}^{2n-1}$ and $\dd Y_j^R = 
\Gm_j(L_j\cap \bS_R^{2n-1})$. We define the following smooth Jordan curve of
$L_j$
$$
Z_j^R := p_j(\dd\cC_j^R),
$$
which is diffeomorphic to $\bS^1$. Let $D_j^R$ be the open disk it bounds. We 
cannot expect $D_j^R$ to be convex.  

\smallskip
The following claim follows from Lemma \ref{lem:LNE-complement}.
\begin{claim}\label{claim:L_j-DjR-LNE}
$\C\be_j\setminus D_j^R$ is LNE.
\end{claim}

Since $\bB_R^2\cap \C\be_j \subset D_j^{R_{j,\ve}}$ and the mapping $\Gm_j$ 
is Lipschitz over $\C\be_j\setminus B_R^2$, it is also Lipschitz over 
$\C\be_j \setminus D_j^R$. The result follows from Proposition 
\ref{prop:lipschitz-graph}. This concludes the proof of Proposition \ref{prop:LNE-unbounded}.
\end{proof}

\medskip
There are two scenarii for which $card(X^\infty) < \deg (X)$. The first one
has already been mentioned in Lemma \ref{lem:notLNE-tangent}, when the germ 
$(X,\lbd)$ is irreducible and with tangent cone $L$ at $\lbd$  contained 
in the hyperplane at infinity. The second is when $(X,\lbd)$ is not irreducible.
\begin{lemma}\label{lem:notLNE-infty-local}
Let $X$ be a curve of $\bP^n$. Let $\lbd$ be an isolated point of $X^\infty$. 
If the germ $(X,\lbd)$ is not irreducible, then $X^a$ is not LNE.
\end{lemma}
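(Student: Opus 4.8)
The plan is to show that when the germ $(X,\lbd)$ splits into at least two irreducible branches at the point at infinity $\lbd$, the affine curve $X^a$ fails the LNE property by exhibiting pairs of points on distinct branches that are outer-close but inner-far. First I would use parametrization \eqref{eq:puiseux} branch by branch, together with the fact (as in Lemmas \ref{lem:LNE-transverse} and \ref{lem:notLNE-tangent}) that each irreducible branch of $(X^a,\lbd)$ is, in suitable affine coordinates, a holomorphic graph $\by = G_i(x_1)$ over a punctured neighbourhood of $\infty$ on the $x_1$-axis, with $G_i(x_1)\to$ a finite limit (possibly the same for several branches when $L$ is transverse, or $\to\bbo$ in the tangent case). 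The key point is that two distinct branches $\cB_1, \cB_2$ of $(X,\lbd)$ both accumulate to the same point $\lbd$, so for $|x_1|$ large there are points $\bx = (x_1, G_1(x_1))\in\cB_1$ and $\bx' = (x_1, G_2(x_1))\in\cB_2$ with the same $x_1$-coordinate, hence $|\bx|,|\bx'| = |x_1|(1+o(1))$, while $|\bx - \bx'| = |G_1(x_1) - G_2(x_1)|$ stays bounded (indeed either tends to $0$ or to a constant).

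Next I would bound the inner distance from below. A rectifiable path $\gm\colon[0,1]\to X^a$ joining $\bx$ to $\bx'$ either leaves a large ball $\bB_R^{2n}$ — in which case, since on $X_\gR^a := X^a\setminus B_R^{2n}$ the two branches lie in distinct connected components (the Conical Structure Theorem at infinity, Section \ref{section:UPoAC}), the path must dip down to radius $R$ and come back, forcing $\lgth(\gm)\geq 2(|x_1| - R)$ — or it stays in $X^a\cap\bB_R^{2n}$. In the latter case I project to the $x_1$-axis: since $\pi\circ\gm$ is a loop in $\C\setminus\bB^2$ based at $x_1$ that is not contractible there (it must wind around, because within a punctured neighbourhood of $\lbd$ the two branch-sheets over the same $x_1$ cannot be joined without circling), we get $\lgth(\gm)\geq\lgth(\pi\circ\gm)\geq 2|x_1|$, exactly as in the $m\geq 2$ case of Lemma \ref{lem:LNE-transverse}. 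Either way $d_\inn^{X^a}(\bx,\bx')\gtrsim |x_1|$, while $d_{X^a}(\bx,\bx') = |\bx - \bx'| = O(1) = o(|x_1|)$, so no representative of $X^a$ near infinity — and a fortiori $X^a$ itself — can be LNE.

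The technical care needed is in the mixed case where some branches of $(X,\lbd)$ are transverse to $\bH_\infty$ and others are tangent to it: these branches reach infinity at different "speeds", so a path could conceivably travel along a fast branch to save length. I would handle this by noting that within a small punctured neighbourhood of $\lbd$ in $\bP^n$ (not just in the affine chart) the branches are genuinely disjoint away from $\lbd$, so the earlier argument using non-contractibility of $\pi\circ\gm$ in the punctured $x_1$-line, combined with the connected-component separation on $X_\gR^a$ from the Conical Structure Theorem, still applies: any short-enough path is confined to the punctured neighbourhood, and there the winding estimate is forced. The main obstacle I anticipate is making this disjointness-and-winding bookkeeping fully rigorous when $p\geq 2$ branches meet at $\lbd$ with a priori different tangent behaviours; once one chooses coordinates so that the common point at infinity is $\lbd = [1:0:\cdots:0]$ and passes to the chart $\cA_1$, the two test points share the first coordinate and the rest of the estimate is essentially a repetition of the computations already carried out in Lemmas \ref{lem:LNE-transverse} and \ref{lem:notLNE-tangent}.
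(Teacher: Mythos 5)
Your proposal is correct and follows essentially the same route as the paper: pick points on two distinct branches of $(X,\lbd)$ sharing the same first coordinate $x_1$, observe that their outer distance is $o(|x_1|)$, and use the fact that the branches lie in distinct connected components of $X^a\setminus B_R^{2n}$ (conical structure at infinity) to force any connecting path to dip into $\bB_R^{2n}$, giving inner distance at least $2(|x_1|-R)$. Two minor remarks: the winding argument in your second case is superfluous, since a path staying outside $B_R^{2n}$ cannot join the two components at all; and $|G_1(x_1)-G_2(x_1)|$ need not stay bounded when a branch is tangent to $\bH_\infty$ (it can grow like $|x_1|^r$ with $0<r<1$), but the weaker estimate $o(|x_1|)$ that you actually use still holds.
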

\begin{proof}
We can assume that $\lbd = [1:0:\ldots:0]$. 
Let $\left\{(X_j,\lbd)\right \}_{j=1,\ldots,N}$ be the irreducible components
of $(X,\lbd)$. 
Fix $R\geq R_X$ so that $S:=X_\gR^a$ is not connected.
Note that 
$$
(X^a,\lbd) = (S,\lbd).
$$
For each connected components $\cC$ of $S$ accumulating at $\lbd$, 
there exists a unique $j\in\{1,\ldots,N\}$ such that 
$$
(\cC,\infty) = (X_j^a,\lbd).
$$
Let $\cC_1$, $\cC_2$ be two connected components of $S$ whose germ at 
$\infty$ are $(X_1^a,\lbd),(X_2^a,\lbd)$, respectively.
Let $(\bx_l)_l \subset \cC_1$ and $(\bx_l')_l \subset \cC_2$ be two sequences 
converging to $\infty$. We further require that 
$$
\bx_l = (l,\bx_k) \;\; {\rm and} \; \; \bx_l' = (l,\bx_k').
$$
Thus we obtain the following estimate
$$
|\bx_l - \bx_l'| =o(l).
$$
Let $\gm_l$ be a rectifiable path on $X^a$ connecting $\bx_l$ and 
$\bx_l'$. Its length satisfies the following inequality  
$$
\lgth (\gm_l)  \; \geq \;  {\rm dist} (\bx_l,\dd S) + {\rm dist} (\bx_l',
\dd S),
$$
where $\dd S$ is the boundary of $S$, which is compact. 
For $l$ large enough we check that 
$$
{\rm dist} (\bx_l,\dd S)  \geq \frac{l}{2} \;\; {\rm and} \;\;
{\rm dist} (\bx_l',\dd S) \geq \frac{l}{2}.
$$
Since for $l$ large enough we have found that 
$$
\lgth (\gm_l)\; \geq \; l,
$$
the subset $X^a$ cannot be LNE.
\end{proof}
\begin{remark}
The demonstration of Lemma \ref{lem:notLNE-infty-local} still holds whenever 
a germ $(S,\infty)$ of a closed connected subset $S$ of $\R^q$ has 
two connected components at infinity,
 whose respective accumulation loci at infinity contain a common point.
\end{remark}
%
%
%
%
%
%
%
%
%
%
%
%
%
%
%
%
%
%
%
%
%
%
%
%
%
%
%
%
%
%
%
%
%
%
%
%
%
%
%
%
%
%
%
%
\section{Bounded part of affine curves}\label{section:BPoAC}
We continue the investigations of Section \ref{section:UPoAC}, using
the exact same objects, notations, and hypotheses to complete what was done 
there in looking more closely this time at the ``bounded part'' of $X^a$.

\bigskip
Let $X_\sing$ be the singular locus of $X$, consisting at most of 
finitely many points. Observe that the affine part $X_\sing^a$ of $X_\sing$
is exactly $(X^a)_\sing$ the set of singular points of the affine part $X^a$.

For $R\geq R_X$ the following 
subset
$$
X_\lR^a := X^a \cap \bB_R^{2n}
$$
is a semi-algebraic subset of $\C^n$ with $C^\infty$ boundary $X_R$.
If the affine part $X^a$ is connected, then for $R\geq R_X$ the subset 
$X_\lR^a$ is also connected.

\medskip The next result is another key piece of our main result and 
is similar to Proposition \ref{prop:compact}.
\begin{proposition}\label{prop:LNE-bounded}
Let $X$ be a projective curve such that its affine part $X^a$ is connected
and not empty.
For $R\geq R_X$ the subset  $X_\lR^a$ is LNE if and only if $X^a$ is locally
LNE at each point of $X_\sing^a$.
\end{proposition}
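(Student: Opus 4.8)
The statement is an ``iff'' and the easy direction is immediate: if $X_\lR^a$ is LNE, then for every $\bx_0\in X_\sing^a$ we have $\bx_0\in X_\lR^a$ (since $R\geq R_X$ and singular points are bounded), so by restricting to a small ball inside $\bB_R^{2n}$ we see that $X^a$ is locally LNE at $\bx_0$; this is just Definition \ref{def:loc-LNE} together with Lemma \ref{lem:locally-LNE} applied to the compact set $X_\lR^a$. So the content is the converse. The plan is to assume $X^a$ is locally LNE at each point of $X_\sing^a$ and deduce that the compact connected set $X_\lR^a$ is LNE. By Lemma \ref{lem:locally-LNE}, it suffices to prove that $X_\lR^a$ is \emph{locally} LNE at each of its points, i.e.\ at each point of $\clos(X_\lR^a)=X_\lR^a$, since it is already compact.

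First I would dispose of the three kinds of points. (1) At a singular point $\bx_0$ of $X^a$: by hypothesis $X^a$ is locally LNE at $\bx_0$, and since $\bx_0$ lies in the interior of $\bB_R^{2n}$ (we may enlarge $R$ or, if $\bx_0\in\bS_R^{2n-1}$, note $R\geq R_X$ lets us pick $R$ generic so no singular point sits on $X_R$), a small enough ball around $\bx_0$ meets $X_\lR^a$ in a neighbourhood already known to be LNE. (2) At an interior smooth point $\bx_0$ of $X_\lR^a\setminus X_R$: here a small ball meets $X_\lR^a$ in a $C^1$ one-dimensional submanifold without boundary, which is locally LNE by Corollary \ref{cor:submfd-LNE}. (3) At a boundary point $\bx_0\in X_R=\dd X_\lR^a$: since $R\geq R_X$, the Conical Structure Theorem at infinity (recalled in Section \ref{section:UPoAC}) guarantees $X_R$ is a $C^\infty$ compact curve and that near $X_R$ the curve $X^a$ looks like the product $X_R\times[R-\epsilon,R+\epsilon]$; hence a small ball meets $X_\lR^a$ in a compact $C^1$ submanifold with $C^1$ boundary, again locally LNE by Corollary \ref{cor:submfd-LNE}. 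Thus $X_\lR^a$ is locally LNE at every point, so Lemma \ref{lem:locally-LNE} finishes it.

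The only genuine subtlety — and the step I expect to be the main obstacle — is the boundary case (3), together with the mild bookkeeping of making sure no singular point of $X$ lands on the sphere $\bS_R^{2n-1}$. For the latter one invokes that $X_\sing$ is finite, so for all but finitely many radii $R$ the sphere $\bS_R^{2n-1}$ avoids $X_\sing^a$; since $R\geq R_X$ is only required to be large, we may in addition require $R$ to be such a generic radius (or, more in the spirit of the paper, note that any point of $X_\lR^a$ is an interior point of $X_{j,r}$, $T_{j,r}$ or $E_r$-type pieces as in the proof of Proposition \ref{prop:compact}, and argue locally LNE on each). For (3) proper, the point is that the defining inequality $|\bx|\leq R$ cuts $X^a$ transversally along $X_R$ — this is exactly the content of the Conical Structure Theorem at infinity for $R\geq R_X$ — so the resulting object is a manifold with corners-free $C^1$ boundary and Corollary \ref{cor:submfd-LNE} applies verbatim. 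Everything else is a direct citation of Lemma \ref{lem:locally-LNE} and Corollary \ref{cor:submfd-LNE}, mirroring the structure of the proof of Proposition \ref{prop:compact}.
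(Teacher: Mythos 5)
Your proposal is correct and follows essentially the same route as the paper: local LNE at singular points by hypothesis, at smooth interior and boundary points via Corollary \ref{cor:submfd-LNE} (the boundary case being handled by the conical structure for $R\geq R_X$, which also already rules out singular points on or outside $\bS_R^{2n-1}$, so your genericity worry is moot), and then Lemma \ref{lem:locally-LNE} on the compact connected set $X_\lR^a$.
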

\begin{proof}
Let $S:=X_\lR^a$, which is semi-algebraically-path-connected. Since 
$S\times S$ is connected, compact and semi-algebraic (thus rectifiable), 
the function $d_\inn^S$ admits a maximum $\ell_S$.

\medskip\noindent
$\bullet$ The condition of $S$ being locally LNE at each point of $X_\sing^a$ 
is certainly necessary to have $S$ LNE.

\medskip\noindent
$\bullet$ Assume that $X^a$ is locally LNE at each point of $X_\sing^a$.

Each point $\bx$ of $X^a\setminus X_\sing^a$ is one of the following two 
types: the germ $(S,\bx)$ is either bi-holomorphic to the germ $(\C,0)$ 
if $|\bx|<R$, or is $C^\infty$ diffeomorphic to the germ $(\R \times 
[0,\infty),\bbo)$ when $|\bx|=R$. In both situations, $S$ is locally LNE
at $\bx$ by Corollary \ref{cor:submfd-LNE}. Therefore $S$ is locally LNE
at each of its points. Since it is compact, Lemma \ref{lem:locally-LNE} 
yields that $S$ is LNE. 
\end{proof}
%
%
%
%
%
%
%
%
%
%
%
%
%
%
%
%
%
%
%
%
%
%
%
%
%
%
%
%
%
%
%
%
%
%
%
%
%
%
%
%
%
%
%
%
%
%
%
%
\section{Main Result: how to recognize an affine LNE curve}\label{section:MR}
We have gathered in the previous sections all the ingredients to prove the 
main result of the paper:
\begin{theorem}\label{thm:main}
Let $X$ be a connected projective curve of $\bP^n$ of degree $\deg (X)$ 
such that $X^\infty$ is finite. The affine curve $X^a$ is LNE if and only if the 
following conditions are satisfied: $(0)$ $X^a$ is connected; 
(i) $X^a$ is locally LNE at each of its singular points; (ii) The curve
$X$ intersects $\bH_\infty$ in exactly $\deg(X)$ points. 
\end{theorem}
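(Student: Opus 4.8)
The strategy is to assemble the already-established local and global results into the two implications. For the forward direction, assume $X^a$ is LNE. Then $(0)$ is immediate since a non-connected set has infinite inner distance between points in different components. Condition (i) follows from Definition~\ref{def:loc-LNE}: an LNE set is locally LNE at every point of its closure, in particular at its singular points. Condition (ii) is the substantive part of this direction: one must show that if $\mathrm{card}(X^\infty) < \deg(X)$, then $X^a$ fails to be LNE. By B\'ezout (Remark~\ref{rmk:mult-inters-numbaffine-curve-LNE-infty}), $\deg(X) = \sum_{\lbd\in X^\infty} i(X,\lbd)$, so $\mathrm{card}(X^\infty) < \deg(X)$ forces some $\lbd\in X^\infty$ with $i(X,\lbd)\geq 2$; then either $(X,\lbd)$ is reducible at $\lbd$, or it is irreducible with $i(X,\lbd)\geq 2$, which (since $i(X,\lbd)\geq m(X,\lbd)$ with equality exactly when the tangent cone is transverse) means either $m(X,\lbd)\geq 2$ with transverse tangent cone, or the tangent cone $L$ lies in $T_\lbd\bH_\infty$. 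These three cases are precisely Lemma~\ref{lem:notLNE-infty-local}, the $m\geq 2$ case of Lemma~\ref{lem:LNE-transverse}, and Lemma~\ref{lem:notLNE-tangent} respectively; each one produces, near $\lbd$, a pair of points whose outer distance is $o$ of their inner distance, contradicting the LNE property of $X^a$.

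For the reverse direction, assume $(0)$, (i), (ii). The plan is to fix a radius $R\geq R_X'$ large enough that Proposition~\ref{prop:LNE-unbounded} applies (condition (ii) gives $\deg(X) = \mathrm{card}(X^\infty)$, so we are exactly in its hypotheses), so each unbounded connected component $\cC_j^R$ of $X_\gR^a := X^a\setminus B_R^{2n}$ is LNE. By Proposition~\ref{prop:LNE-bounded}, condition (i) guarantees that the bounded part $X_\lR^a := X^a\cap\bB_R^{2n}$ is LNE. It remains to glue these finitely many LNE pieces into an LNE structure on all of $X^a$. The pieces $X_\lR^a, \cC_1^R,\ldots,\cC_{\deg(X)}^R$ cover $X^a$ and overlap pairwise along collars (neighbourhoods of the spheres $X_R$), so I would run the same covering-by-LNE-pieces argument as in the proof of Lemma~\ref{lem:locally-LNE}: the ratio function $f(\bs,\bs') = d_\inn^{X^a}(\bs,\bs')/d_S(\bs,\bs')$ is bounded on the union $V$ of the squares (piece)$\times$(piece) by a common LNE constant, and bounded on the complement $X^a\times X^a\setminus V$ by a maximum — but here $X^a$ is \emph{not} compact, so this last step needs care.

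The main obstacle is precisely this non-compactness: one cannot directly invoke Lemma~\ref{lem:locally-LNE}. I would handle it by working with a slightly enlarged bounded part, say $X_{\leq 2R}^a$ (still LNE by Proposition~\ref{prop:LNE-bounded}), so that the overlap $X_{\leq 2R}^a\cap \cC_j^R$ contains a full collar $X^a\cap(\bB_{2R}^{2n}\setminus B_R^{2n})$ with non-empty interior; then any two points of $X^a$ lie either both in $X_{\leq 2R}^a$, both in some single $\cC_j^R$, or one in each of two different $\cC_j^R$'s — and in the mixed case any path between them must pass through the compact annulus $X_R$, so its length controls both $|\bx_0|$ and $|\bx_1|$ from below (as in the proof of Lemma~\ref{lem:notLNE-infty-local}, but now used positively). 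Together with the diameter bound on $X_{\leq 2R}^a$ from its finite maximal inner distance $\ell_S$, and the estimate $d_\inn^{X^a}(\bs,\bs')\leq d_\inn^{\cC_j^R}(\bs,\bs') + (\text{bounded term})$, one gets a global LNE constant. Thus $X^a$ is LNE, completing the proof. A subtlety worth a sentence is that when $\mathrm{card}(X^\infty) < \deg(X)$ but $X^\infty$ is still finite, condition (ii) fails, so the theorem correctly predicts non-LNE — consistent with the forward direction; and the hypothesis that $X^\infty$ is finite (equivalently, no irreducible component of $X$ lies in $\bH_\infty$) is what makes $\deg(X) = \deg(X^a)$ and keeps all the local-at-infinity lemmas applicable.
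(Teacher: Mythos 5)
Your forward direction matches the paper's: conditions $(0)$ and (i) are immediate from the definitions, and (ii) is obtained by ruling out, via Lemma \ref{lem:notLNE-infty-local}, Lemma \ref{lem:notLNE-tangent} and the $m\geq 2$ case of Lemma \ref{lem:LNE-transverse}, every way in which $card(X^\infty)$ could fall short of $\deg(X)$; your reorganization through B\'ezout and the inequality $i(X,\lbd)\geq m(X,\lbd)$ is equivalent to what the paper does.

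The reverse direction has the right decomposition ($X_\lR^a$ LNE by Proposition \ref{prop:LNE-bounded}, each $\cC_j^R$ LNE by Proposition \ref{prop:LNE-unbounded}), but the gluing step contains a genuine gap. For two points $\bx_j\in\cC_j^R$ and $\bx_k\in\cC_k^R$ with $1\leq j<k\leq d$, proving LNE requires an \emph{upper} bound on $d_\inn^{X^a}(\bx_j,\bx_k)$ together with a \emph{lower} bound on the outer distance $|\bx_j-\bx_k|$. The upper bound $d_\inn^{X^a}(\bx_j,\bx_k)\lesssim A(r_j+r_k)$, where $r_l=|\bx_l|$ (go to $\dd\cC_j^R$, cross the bounded part, go out along $\cC_k^R$), is easy; the crux is to show $|\bx_j-\bx_k|\geq c\,(r_j+r_k)$ for some $c>0$. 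Your argument --- any path between them must pass through the compact annulus, so its length controls $|\bx_j|,|\bx_k|$ from below --- bounds the \emph{inner} distance from below, which is the wrong inequality: that is exactly the estimate one uses to \emph{disprove} LNE (as in Lemma \ref{lem:notLNE-infty-local}), and it contributes nothing toward $d_\inn\leq L\cdot d_{\mathrm{outer}}$ unless the outer distance is also shown to be comparably large. The paper supplies the missing estimate in Claim \ref{claim:xj-xk}: because the components $\cC_j$ accumulate at \emph{distinct} points $\lbd_j$ of $\bH_\infty$, the unit vectors $\bx_j/r_j$ and $\bx_k/r_k$ stay at distance at least $\dlt/2$ apart once $R$ is large (where $\dlt$ is the minimal distance between the circles $S_j=\C\lbd_j\cap\bS_1^{2n-1}$), and the plane-geometry identity \eqref{eq:xj-xk-equal} then yields $|\bx_j-\bx_k|\geq\frac{\dlt}{4}(r_j+r_k)$. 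This angular-separation input --- which is precisely where the distinctness of the $d$ points at infinity enters the sufficiency proof --- is absent from your proposal, so the mixed case is not closed. (The remaining case of one point in the bounded part and one in some $\cC_k^R$ also needs the elementary bound $|\bx_k-\bx_0|\geq\frac12 r_k$ when $r_k\geq 2R$; your ``bounded term'' handling can be made to give that.)
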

\begin{proof}
Assume $X^a$ is LNE. It is connected and it is locally LNE at each of its 
points, 
thus we obtained $(0)$ and $(i)$. Since Lemma \ref{lem:notLNE-infty-local} cannot 
happen, by Lemma \ref{lem:notLNE-tangent} we deduce that at each $\lbd \in 
X^\infty$ the germ $(X,\lbd)$ is non-singular. 
Lemma \ref{lem:LNE-transverse} further implies that 
$(X,\lbd)$ is transverse to $\bH_\infty$ at each $\lbd$ of $X^\infty$.
Since $\deg(X)$ is the intersection number of $X$ with any projective 
hyperplane
of $\bP^n$, the transversality of $X$ and $\bH_\infty$ at each of their
intersection points yields $card(X^\infty) = \deg(X)$.

\medskip\noindent
Assume that conditions $(0)$, $(i)$, $(ii)$ are satisfied. 

Thus there exists $R_X'$ such that $X_\lR^a$ is connected for each 
$R\geq R_X'$. Since $X_R$ is a smooth compact sub-manifold once $R\geq R_X'$, 
hypothesis $(i)$ and Proposition \ref{prop:LNE-bounded} guarantee that
$X_\lR^a$ is LNE for each $R\geq R_X'$. Let 
$$
\cC_0 := X_\lR^a.
$$
Let $d =\deg(X)$ and let $X^\infty = \{\lbd_j\}_{j=1,\ldots,d}$.  
Condition $(ii)$ implies that Proposition \ref{prop:LNE-unbounded} is 
satisfied,
that is, each connected component of $X_\gR^a$ is LNE whenever $R\geq R_X'$. 
Let $\cC_1,\ldots,\cC_d,$ be the connected components of $X_\gR^a$, indexed 
such that $\lbd_j$ is the accumulation point at infinity of $\cC_j$. 

For each $j=0,\ldots,d,$ let $d_j := d_\inn^{\cC_j}$. 

Since each $\cC_j$ is LNE, let $A$ be a positive constant which is 
a LNE constant for each $\cC_j$:
$$
\bx,\bx'\in \cC_j \; \Longrightarrow \; d_j(\bx,\bx') \; \leq \; 
A |\bx-\bx'|,  \;\; j=0,\ldots,d.
$$
\begin{claim}\label{claim:xj-xk}
There exists a positive constant $A'$ such that for any $0 \leq j<k\leq d$
the following estimate holds true
$$
\bx_j\in \cC_j, \bx_k \in \cC_k \; \Longrightarrow \; 
 d_\inn^{X^a} (\bx_j,\bx_k) \leq A' |\bx_j - \bx_k|.  
$$
\end{claim}
\begin{proof}[Proof of the claim]
For $j \geq 1$, let $\C_j$ be the complex line of $\Cn$ through the origin 
corresponding to complex line direction
$\lbd_j$. Let $S_j := \C_j \cap \bS_1^{2n-1}$ be the unit circle of $\C_j$ 
centred at the origin. Whenever $j\neq k$, the complex lines $\C_j$ and $\C_k$ 
only meet at $\bbo$, therefore the intersection $S_j \cap S_k$ 
is empty. For any $1\leq j  < k \leq d$, let 
$$
\dlt_{j,k} := \dist(S_j,S_k) > 0,
$$
the Euclidean distance between $S_j$ and $S_k$, and let 
$$
\dlt := \min_{1\leq j<k\leq d} \dlt_{j,k} >0.
$$
Since $\cC_j$ accumulates at $\lbd_j$ at infinity, we can assume that $R$ is 
large enough so that
$$
\sup \left\{ \dist\left(S_j,\frac{\bx}{|\bx|}\right) : \bx \in \cC_j \right\} \;
\leq \; \frac{\dlt}{4}
$$
for each $j=1,\ldots,d$. 

Let $\bx_j\in \cC_j$ and $\bx_k \in \cC_k$, for a given pair of indices  
$0\leq j<k \leq d$. Let $\bu_l \in \bS^{2n-1}$ (when defined) and 
$r_l \geq 0$, for $l=j,k,$ be defined as follows:
$$
\bu_l := \frac{\bx_l}{|\bx_l|}, 
\;\; {\rm and} \;\; r_l := |\bx_l|.
$$ 
Let $2\aph\in [0,\pi]$ be the non-oriented angle between $\bu_j$ and
$\bu_k$. 

\smallskip\noindent
$\bullet$ {\bf Case 1:} $1\leq j < k \leq d$.

Let $\bx_k$ be a point of $\dd \cC_l \subset X_R^a$ which realizes 
the minimum of $d_l (\bx_l,\dd\cC_l)$ for $l=j,k$. 

By choice of $R$ and $j,k$ we have 
$$
|\bu_j - \bu_k| = 2\sin(\aph)\; \geq \; \frac{\dlt}{2},
$$
Assume that $r_j \geq r_k$, we recall  identity  \eqref{eq:xj-xk-equal}
$$
|\bx_j - \bx_k|^2 = (r_j+r_k)^2\sin^2 \aph + (r_j-r_k)^2\cos^2\aph 
$$
from which we deduce again Estimates \eqref{eq:xj-xk-origin}
$$
\frac{\dlt}{4} \cdot (r_k+r_j) \leq |\bx_j - \bx_k| .
$$
Since $r_j, r_k \geq R$, Estimate \eqref{eq:xj-xk-inequal} yields the 
following estimate
\begin{equation}\label{eq:yj-yk}
|\by_j - \by_k| \; \leq \; 2R \; \leq \; \frac{4}{\dlt} \cdot |\bx_j - \bx_k|.
\end{equation}
as well as
\begin{equation}\label{eq:xl-yl}
|\bx_l - \bx_k| \; \leq \; r_l + R\; \leq \; \frac{4}{\dlt} \cdot 
|\bx_j - \bx_k|
\;\; {\rm for} \;\; l=j,k.
\end{equation}
Since, we obviously  have 
$$
d_\inn^{X^a}(\bx_j,\bx_k) \leq d_j(\bx_j,\by_j) + d_0(\by_j,\by_k) + 
d_k(\by_k,\bx_k), 
$$ 
combining Estimates \eqref{eq:yj-yk} and \eqref{eq:xl-yl} with $\cC_0,\cC_j,
\cC_k$ being LNE with LNE constant $A$ yield an expected inequality
$$
 d_\inn^{X^a}(\bx_j,\bx_k) \; \leq \; \frac{12A}{\dlt}\cdot|\bx_j -\bx_k|.
$$

\smallskip\noindent
$\bullet$ {\bf Case 2:} \em $j=0$ and $1\leq k \leq d$. \em

We can assume $r_0 < R < r_k$. 

If $r_k \leq 2R$, we know that $X_{\leq 2R}^a$ is LNE. 

Let $\by$ be a point of $\dd \cC_k$. 
When $r_k \geq 2R$, we find that 
$$
\frac{1}{2}r_k \; \leq \; |\bx_k - \bx_0 | \; \leq \; \frac{3}{2}r_k, \;\; 
{\rm and} \;\;
\frac{1}{2} r_k \; \leq \; d_k(\bx_k,\by) \; \leq \;  \frac{3A}{2}r_k.
$$
For $r_k\geq 2R $, we therefore deduce the following estimate
$$
 d_\inn^{X^a}(\bx_0,\bx_k) \; \leq \; d_0(\bx_0,\by) + d_k (\bx_k,\by) \; 
\leq \; 2 A R +  \frac{3A}{2} r_k
\; \leq \; 5A  |\bx_k - \bx_0|,
$$
proving the statement in the second case.
\end{proof}
Claim \ref{claim:xj-xk} establishes the desired LNE properties between any 
pair of points belonging to different subsets $\cC_j$.
Since each of these subset is LNE, the result is proved. 
\end{proof}
%
%
%
%
%
%
%
%
%
%
%
%
%
%
%
%
%
%
%
%
%
%
%
%
%
%
%
%
%
%
%
%
%
%
%
%
%
%
%
%
%
%
%
%
%
%
%
%
%
\section{Affinely LNE versus Projectively LNE}\label{section:AvsP}
The space $\bP^n$ is naturally equipped with a Kh\"aler metric, the 
Fubini-Study metric. As mentioned in point (ii) of Remark 
\ref{rem:compact-manifold}, any continuous Riemannian structure equipping 
$\bP^n$, Hermitian or not, will yield a metric space structure equivalent to 
the one induced from the Fubini-Study metric.
We will refer to any such continuous Riemannian metric on $\bP^n$ as the 
projective metric $g_\pro$.

\medskip
Let $H$ be a hyperplane of $\bP^n$, and let $\C_H^n$ be the affine space
$\bP^n \setminus H$. We can embed $\Cn$ in $\bP^n$ as $\C_H^n$ for any 
a priori given hyperplane $H$ of $\bP^n$ such that the embedding provides
a unitary isomorphism between $\Cn$ and $\C_H^n$. 

\medskip
As a subset of the projective space $\bP^n$, the affine space $\C^n$ is 
equipped with the restriction of the projective metric $g_\pro$, 
whereas as a complex vector space it is equipped with its canonical Hermitian 
structure $eucl$. The metric space structures respectively obtained
from $(\Cn,g_\pro)$ and $(\Cn, eucl)$ are clearly not equivalent, 
since $\C^n$ is bounded for $g_\pro$ while it is not bounded for $eucl$. 
Yet the respective restrictions of $eucl$ and $g_\pro$ to any given compact 
subset 
of $\Cn$ produce equivalent metric space structures over the compact subset, 
for the outer metric and thus also for the inner metric. 

\medskip
In this section, Theorem \ref{thm:main-proj} establishes the relation 
between LNE property of affine curve w.r.t. the projective and 
euclidean metrics. We present some conventions first.

\begin{definition}\label{def:gen-pos}
Let $X$ be a curve of $\bP^n$ and let $H$ be a hyperplane of $\bP^n$.
\\
1) The curve $X\setminus H$ in $\C_H^n$ is the \em affine trace of $X$ w.r.t. 
$H$. \em
\\
2) The pair $X,H$ is in \em general position \em if the intersection 
$X\cap H$ consists only of non-singular points of $X$ at each of which $X$ 
and $H$ are transverse.
\end{definition}
Here we forget about Convention 3, introducing
in its stead  the following
\begin{definition}\label{def:proj-LNE}
(i) A subset $S$ of $\Cn$ is LNE in $\Cn$ if it is LNE in $\C^n$ w.r.t.  
$eucl$.

\smallskip\noindent
(ii) A subset $S$ of $\bP^n$ is \em LNE in $\bP^n$ \em if it is LNE
in $\bP^n$ w.r.t. $g_\pro$. 
\end{definition}
%
%
Observe that when the curve $X$ is LNE in $\bP^n$, its affine trace 
$X\setminus H$ is not necessarily LNE in $\C_H^n$ for any hyperplane $H$,
as already suggested by Lemma \ref{lem:notLNE-tangent} and Lemma 
\ref{lem:notLNE-infty-local}.
\begin{example} Let $\bP^2$ be equipped with projective coordinates 
$\bz =[x:y:z]$. Let $L$ be the hyperplane $\{z=0\}$ and $\C^2:=\C_L^2$.
Consider the non-singular quadrics 
$$
P :=\{\bz : yz - x^2 = 0\} \;\; {\rm and} \;\; 
H := \{\bz : xy -z^2 = 0\}.
$$
The parabola $P$ and the hyperbola $H$ are isomorphic via the unitary 
linear change of coordinates $[x:y:z] \to [z:y:x]$.
Both curves are LNE in $\bP^2$ since they are non-singular and irreducible.
The affine traces w.r.t. $L$ are
$$
P\setminus L := \{(x,y) : y - x^2= 0\}  \;\; {\rm and} \;\; 
H\setminus L := \{(x,y) : xy - 1 = 0\}.
$$ 
By Theorem \ref{thm:main}, the hyperbola $H\setminus L$ is LNE in $\C^2$, 
while the parabola $P\setminus L$ is not LNE in $\C^2$. 
The difference being that $H,L$ are in general position while 
$P,L$ are not.
\end{example}
The next and penultimate result points at the very close interplay between
a projective curve $X$ and its general affine traces while investigating 
the LNE natures of $X$.
\begin{theorem}\label{thm:main-proj}
Let $X$ be a curve of $\bP^n$. The following statements are equivalent:
\\
(i) $X$ is LNE in $\bP^n$. 
\\
(ii) $X$ is connected and, at each singular point $\lbd$ of $X$ the germ 
($X,\lbd)$ is a finite union of pairwise transverse non-singular curve germs.
\\
(iii) For any hyperplane $H$ of $\bP^n$ such that the pair $X,H$ is in
general position, the variety $X \cup H$ is LNE in $\bP^n$.
\\
(iv) For any hyperplane $H$ of $\bP^n$ such that the pair $X,H$ is in
general position, the affine curve $X\setminus H$ is LNE in $\C_H^n$.
\\
(v) There exists a hyperplane $H$ of $\bP^n$ in general position with $X$
such that the affine curve $X\setminus H$ is LNE in $\C_H^n$.
\end{theorem}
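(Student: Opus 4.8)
The plan is to prove the cycle of implications $(i)\Rightarrow(ii)\Rightarrow(iii)\Rightarrow(iv)\Rightarrow(v)\Rightarrow(i)$, using the local characterisations already established. First, $(i)\Rightarrow(ii)$ follows immediately from Proposition~\ref{prop:compact}: since $X$ is a complex curve in the compact complex manifold $\bP^n$, being LNE with respect to $g_\pro$ forces $X$ to be connected and locally LNE at each singular point, and by Proposition~\ref{prop:local-LNE} local LNE-ness at a singular point $\lbd$ is precisely the statement that $(X,\lbd)$ is a finite union of pairwise transverse non-singular curve germs. Conversely, $(ii)\Rightarrow(i)$ is also Proposition~\ref{prop:compact}, so in fact $(i)\Leftrightarrow(ii)$ is immediate and could be recorded separately.

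Next I would handle $(ii)\Rightarrow(iii)$. Given a hyperplane $H$ in general position with $X$, I need to show $X\cup H$ is LNE in $\bP^n$; by Proposition~\ref{prop:compact} it suffices to check connectedness and local LNE-ness at each singular point of $X\cup H$. Connectedness holds because $H$ meets $X$ (a positive-degree curve must meet every hyperplane) and $H$ is connected, while $X$ is connected by $(ii)$. The singular locus of $X\cup H$ consists of the old singular points of $X$ (where $H$ plays no role, $H$ being disjoint from $X_\sing$) together with the transverse intersection points $X\cap H$; at the former the germ is unchanged, at the latter the germ is the union of a non-singular curve germ (a branch of $X$) and the non-singular germ $(H,\lbd)$ meeting it transversally, so it is again a finite union of pairwise transverse non-singular germs. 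Hence $X\cup H$ is locally LNE everywhere and Proposition~\ref{prop:compact} gives $(iii)$.

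For $(iii)\Rightarrow(iv)$ I would argue that $X\setminus H$ is locally LNE at infinity in $\C_H^n$ and locally LNE at its singular points, then invoke a global patching as in the proofs of Proposition~\ref{prop:LNE-bounded} and Theorem~\ref{thm:main}. Since $X\cup H$ is LNE in $\bP^n$, it is in particular locally LNE at each point of $H$; restricting to a small neighbourhood of a point $\lbd\in X\cap H$ and removing $H$ shows that the germ $(X\setminus H,\lbd)$ (a germ at infinity in the affine chart $\C_H^n$) has an LNE representative near that point at infinity, which in the euclidean chart is exactly the condition that the transverse branch of $X$ through $\lbd$ is locally LNE at infinity; together with local LNE-ness of $X$ at its affine singular points (inherited from $(iii)$ restricted away from $H$) and Claim~\ref{claim:xj-xk}-type gluing, Theorem~\ref{thm:main} yields that $X\setminus H$ is LNE in $\C_H^n$. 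The implication $(iv)\Rightarrow(v)$ is trivial once one checks a general position hyperplane exists, which follows from a standard Bertini/genericity argument since $X_\sing$ is finite and the tangent lines at the finitely many intersection points impose only finitely many conditions.

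The crux is $(v)\Rightarrow(i)$ (equivalently, together with $(ii)\Leftrightarrow(i)$, $(v)\Rightarrow(ii)$). Here I am given one hyperplane $H$ with $X\setminus H$ LNE in $\C_H^n$, and I must deduce that $X$ is LNE in $\bP^n$, i.e.\ connected and with the right transverse-branch structure at every singular point. Connectedness of $X$ transfers from connectedness of $X\setminus H$ together with the fact that the closure in $\bP^n$ of each connected component of $X\setminus H$ meets $H$. For the singular points: those of $X$ lying in $\C_H^n$ are singular points of the affine curve $X\setminus H$, and Theorem~\ref{thm:main} tells us $X\setminus H$ being LNE forces it to be locally LNE there, i.e.\ a transverse union of non-singular branches; the subtlety is that $H$ is assumed to meet $X$ only at non-singular transverse points, so $X$ has \emph{no} singular points on $H$, and therefore \emph{every} singular point of $X$ already lies in $\C_H^n$ and is covered by the affine statement. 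Thus the only real work is importing the affine local structure (Theorem~\ref{thm:main}, condition (i)) and the affine-at-infinity information is not even needed for $(i)$ beyond connectedness. \textbf{I expect the main obstacle} to be the careful bookkeeping in $(iii)\Rightarrow(iv)$, matching germs at infinity in the euclidean chart $\C_H^n$ with germs of $X\cup H$ near points of $H$ in $\bP^n$, and verifying that local LNE-ness in the projective metric near a point of $H$ really does translate (via the comparison of $g_\pro$ and $eucl$ on compact pieces, noting that neighbourhoods of points at infinity are \emph{not} compact in $\C_H^n$) into the euclidean ``locally LNE at infinity'' hypotheses of Lemma~\ref{lem:LNE-transverse}; one must work with a euclidean chart centred at $\lbd\in H$, use that the affine chart $\C_H^n$ there looks like $\Cn$ near infinity in the relevant directions, and track how the inversion $\iota_{2n}$ used in Section~\ref{section:UPoAC} interacts with the projective coordinates.
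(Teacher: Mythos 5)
Your overall architecture is close to the paper's, and several steps match exactly: $(i)\Leftrightarrow(ii)$ is indeed just Proposition~\ref{prop:compact}, $(iv)\Rightarrow(v)$ is trivial, and your $(v)\Rightarrow(i)$ is precisely the paper's argument (general position forces $X_\sing=(X^a)_\sing$, so affine local LNE-ness at singular points transfers verbatim and Proposition~\ref{prop:compact} closes it). Your $(ii)\Rightarrow(iii)$ is also essentially right, though note that Proposition~\ref{prop:compact} is stated only for complex \emph{curves}, and $X\cup H$ has a component of dimension $n-1$; the correct citations are Lemma~\ref{lem:transverse-submfds} for the local LNE-ness at the points of $X\cap H$ (where the line $T_\lbd X$ and the hyperplane $T_\lbd H$ meet only at the origin) and then Lemma~\ref{lem:locally-LNE} for the global conclusion.

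The genuine gap is in $(iii)\Rightarrow(iv)$, exactly where you flagged the "main obstacle." Your plan is to take local LNE-ness of $X\cup H$ at a point $\lbd\in X\cap H$ \emph{with respect to the projective metric} and convert it into the euclidean "locally LNE at infinity" hypothesis for $X\setminus H$. But no neighbourhood of $\lbd$ in $\bP^n$ meets $\C_H^n$ in a set that is bounded for $eucl$, so the comparison of $g_\pro$ and $eucl$ "on compact pieces" that you invoke never applies: the identity map between the two metric structures on a punctured neighbourhood of $\lbd$ is not bi-Lipschitz, and LNE-ness simply does not transport across it. (The parabola in the paper's example is locally LNE at its point at infinity in $\bP^2$, yet its affine trace is not locally LNE at infinity in $\C^2$ — the transfer genuinely depends on transversality and cannot be a formal metric comparison.) The paper avoids this entirely: from $(iii)$ it first deduces $(i)$ (near $X_\sing$ the union $X\cup H$ coincides with $X$, so Proposition~\ref{prop:compact} applies), and then proves $(i)\Rightarrow(iv)$ by feeding condition $(ii)$ into Theorem~\ref{thm:main}: general position plus B\'ezout gives $card(X^\infty)=\deg(X)$, and the at-infinity LNE-ness is supplied intrinsically in the euclidean chart by Lemma~\ref{lem:LNE-transverse} and Proposition~\ref{prop:LNE-unbounded} (via the Puiseux parametrisation and the Lipschitz-graph argument), not by transporting the projective-metric estimate. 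Rerouting your $(iii)\Rightarrow(iv)$ through $(iii)\Rightarrow(i)\Rightarrow(iv)$ in this way closes the gap with tools you already have.
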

\begin{proof}
The equivalence $(i)$ and $(ii)$ is simply Proposition \ref{prop:compact}.

\smallskip\noindent
Implication $(ii) \Longrightarrow (iii)$ follows from Lemmas \ref{lem:transverse-submfds} and \ref{lem:locally-LNE}.
 
\smallskip\noindent
To obtain $(iii) \Longrightarrow (i)$, note that if the union $X\cup H$
is LNE in $\bP^n$ then it is locally LNE in $\bP^n$ at each singular point
of $X$. Thus Proposition \ref{prop:compact} yields the implication.

\smallskip\noindent
To prove $(i)\Longrightarrow (iv)$, we observe that whenever $X$ and $H$
are in general position, the intersection $H \cap X_\sing$ is empty, and 
thus the affine trace $X\setminus H$ is connected. 
Therefore the affine trace $X\setminus H$ is LNE in $\C_H^n$ 
by point $(ii)$ and Theorem \ref{thm:main}.

\smallskip\noindent
Implication $(iv)\Longrightarrow (v)$ is obvious. 

\smallskip\noindent
To end the proof, let us show $(v) \Longrightarrow (i)$.
Let $\Cn := \C_H^n$ and let $X^a := X\setminus H$. 
By hypothesis any point of $X\cap H$ is a non-singular point of $X$, 
therefore $X_\sing =(X^a)_\sing$. Since $X^a$ is LNE in $\Cn$, it is 
locally LNE in $\Cn$ at each of its singular points, and so the affine curve 
$X^a$ contains a compact neighbourhood of each point of $X_\sing$ which is 
LNE in $\Cn$. Thus $X$ is locally LNE in $\bP^n$ at each of its singular 
points. We conclude applying Proposition~\ref{prop:compact}.
\end{proof}
%
%
%
%
%
%
%
%
%
%
%
%
%
%
%
%
%
%
%
%
%
%
%
%
%
%
%
%
%
%
%
%
%
%
%
%
%
%
%
%
%
%
%
%
%
%
%
\section{Lipschitz classification of LNE curves}\label{section:last}

In this section we present a Lipschitz complete invariant of LNE affine curves.

\begin{definition}\label{def:classify}
Let $X_1^a$ in $\C^{n_1}$ and $X_2^a$ in $\C^{n_2}$ be two affine curves.

1) They are \em topologically equivalent \em if there exists a homeomorphism 
$X_1^a \to X_2^a$.

2) They are \em outer Lipschitz equivalent \em if there exist a 
bi-Lipschitz homeomorphism 
$$
(X_1^a,d_{X_1^a}) \to (X_2^a,d_{X_2^a}). 
$$
\indent
3) They are \em inner Lipschitz equivalent \em if there exist a 
bi-Lipschitz homeomorphism
$$
(X_1^a,d_\inn^{X_1^a}) \to (X_2^a,d_\inn^{X_2^a}).
$$ 
\end{definition}

Obviously Lipschitz equivalence implies topological equivalence. Moreover, 
for LNE curves outer Lipschitz equivalence is the same as inner 
Lipschitz equivalence, thus in case of LNE curves we will speak simply of 
Lipschitz equivalence. We will show that in fact for LNE curves all three 
classifications coincide.

\bigskip
Let $X$ be an irreducible curve of $\bP^n$ of degree $d \geq 1$.
For each $\by \in X$, let $c(\by)$ be the number of 
irreducible components of the curve germ $(X,\by)$. The curve $X$
is bi-rationally equivalent to a compact Riemann surface $S_g$, which is 
a compact connected oriented surface of genus $g$.
Therefore the Euler Characteristic of $X^a\setminus X_\sing^a$ satisfies 
the following identity
\begin{equation}\label{eq:euler-affine-0}
\chi(X^a \setminus X_\sing^a) = 2 - 2g  - \sum_{\by\in X_\sing^a}c(\by)
- \sum_{\lbd \in X^\infty} c(\lbd).
\end{equation}

Given the irreducible LNE curve $X^a$ of $\Cn$ with projective closure
$X$, we define the following ordered object (encoding degree, number of 
singular points, genus, and multiplicities at singular points)
$$
DSGM(X):= \left(\deg(X); s(X); g(X); {(\by,c(\by))}_{\by \in 
X_\sing}\right),
$$
where $s(X) := \#X_\sing$ and $g(X)$ is the genus of the 
surface $S_g$ resolved from $X$. When $s(X) = 0$,
we suppress the last entry, that is $DSGM(X) = (\deg(X),0,g(X))$. Two objects 
$$
DSGM(X_i) = \left(d_i;s_i; g_i; {(\by,c(\by))}_{\by \in (X_i)_\sing}
\right), \;\; i=1,2,
$$
are equivalent if $d_1 = d_2$, $s_1 = s_2 =: s$, $g_1 = g_2$, and there 
exists a bijection $\vp$ of $\{1,\ldots,s\}$ such that $c(\by_j^1) = 
c(\by_{\vp(j)}^2)$ for each $j=1,\ldots,s$, where $(X_i)_\sing = 
\{\by_1^i,\ldots,\by_s^i\}$ for $i=1,2$.

\medskip
\begin{theorem}\label{thm:classify}
The following statements are equivalent:
\begin{enumerate}[(a)]
	\item Two irreducible complex affine algebraic LNE curves are Lipschitz equivalent.
	\item Two irreducible complex affine algebraic LNE curves are topologically equivalent.
	\item The $DSGM$ invariants of the two irreducible complex affine algebraic LNE curves are equivalent.
\end{enumerate}

\end{theorem}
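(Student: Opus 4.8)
The plan is to prove the cycle of implications $(a) \Rightarrow (b) \Rightarrow (c) \Rightarrow (a)$, of which only the last is substantial. The implication $(a) \Rightarrow (b)$ is immediate since any bi-Lipschitz homeomorphism is in particular a homeomorphism. For $(b) \Rightarrow (c)$, I would argue that the $DSGM$ invariant is a topological invariant of the affine curve: the degree $d = \deg(X)$ equals the number of points of $X^\infty$ by Theorem \ref{thm:main} (condition (ii)), which in turn is the number of ends of $X^a$; the genus $g$ and the number of singular points together with their branch counts $c(\by)$ are read off from the Euler characteristic formula \eqref{eq:euler-affine-0} combined with local topology at singular points (a singular point of an LNE curve is, by Proposition \ref{prop:local-LNE}, a transverse union of $c(\by)$ smooth branches, so a neighbourhood of it in $X^a$ is topologically a cone on $c(\by)$ circles, i.e.\ $c(\by)$ disks glued at a point — this is a local homeomorphism invariant distinguishing it from a smooth point where $c=1$). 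Thus a homeomorphism $X_1^a \to X_2^a$ must match singular points with equal branch counts, preserve the number of ends, and preserve $\chi$, forcing equality of all entries, so the $DSGM$ invariants are equivalent.

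The heart of the theorem is $(c) \Rightarrow (a)$: from equivalence of the combinatorial data one must build a bi-Lipschitz homeomorphism. Here is where Theorem \ref{thm:main-proj} and Proposition \ref{prop:LNE-unbounded}, together with Claim \ref{claim:bilipschitz}, do the work. The idea is to decompose each curve, for $R$ large, as $X_i^a = \cC_0^{(i)} \cup \cC_1^{(i)} \cup \dots \cup \cC_{d}^{(i)}$ — a compact ``bounded core'' $\cC_0^{(i)} = (X_i^a)_{\leq R}$ glued along $d$ circles to $d$ unbounded ends, each end $\cC_j^{(i)}$ being bi-Lipschitz (outer, hence also inner since it is LNE) to the complement of a ball in a complex affine line by Claim \ref{claim:bilipschitz}. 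All the ends of $X_1^a$ are bi-Lipschitz to all the ends of $X_2^a$ via maps respecting the boundary circles, and there are $d$ of each since $d_1 = d_2$. So it remains to produce a bi-Lipschitz homeomorphism between the two bounded cores $\cC_0^{(1)}$ and $\cC_0^{(2)}$ compatible with the gluing. These cores are compact connected LNE sets whose only singularities are the transverse-branch points recorded by the $DSGM$ data; since they carry the same genus, the same number of boundary circles ($=d$), and the same multiset of local branch-types, they are homeomorphic, and one upgrades a homeomorphism to a bi-Lipschitz one using that both are semialgebraic (so one invokes a semialgebraic bi-Lipschitz triviality / the fact that a homeomorphism of compact semialgebraic LNE sets respecting a cell decomposition can be taken bi-Lipschitz on each closed cell, then glued). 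Finally, gluing a bi-Lipschitz map of the cores with bi-Lipschitz maps of the ends along compact smooth boundary circles yields a global bi-Lipschitz homeomorphism $X_1^a \to X_2^a$ — the gluing being legitimate because LNE guarantees inner and outer distances are comparable, so local bi-Lipschitz estimates across the finitely many gluing circles assemble into a global one exactly as in the proof of Lemma \ref{lem:locally-LNE} and Theorem \ref{thm:main}.

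The main obstacle I anticipate is precisely the construction and gluing in the bounded core: producing a bi-Lipschitz (not merely topological) model for a compact LNE curve with prescribed singularities and boundary, and ensuring the matching with the end-models along the boundary circles is bi-Lipschitz. The cleanest route is to pick, on each $X_i^a$, a common ``abstract model'': resolve $X_i$ to its normalization $S_{g}$, which is a smooth compact Riemann surface with $d$ marked points (the preimages of $X^\infty$, one per end by transversality and condition (ii)) and with the singular points of $X_i^a$ corresponding to identifications of $\sum_{\by} c(\by)$ points in groups; equivalence of $DSGM$ data gives a diffeomorphism between the two abstract models respecting all markings and identifications. One then transports this to a bi-Lipschitz homeomorphism of the cores by observing that near each singular point the curve is, up to bi-Lipschitz equivalence, a standard transverse union of $c(\by)$ flat disks (Lemma \ref{lem:transverse-submfds} controls the outer geometry there, and LNE makes outer $=$ inner up to constant), and away from singular points and ends the curve is a compact smooth surface-with-boundary, for which any diffeomorphism is bi-Lipschitz. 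Assembling these local pieces, taking care that the finitely many transition regions overlap in compact sets where outer and inner metrics are uniformly comparable, completes the argument; the bookkeeping of constants in this assembly is routine but is the only genuinely technical part.
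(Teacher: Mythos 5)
Your proposal is correct and follows essentially the same route as the paper: the implications $(a)\Rightarrow(b)\Rightarrow(c)$ are handled identically (ends, local branch counts, and the Euler characteristic formula \eqref{eq:euler-affine-0}), and for $(c)\Rightarrow(a)$ your final "cleanest route" is exactly the paper's decomposition of each curve into singular-point neighbourhoods bi-Lipschitz to truncated transverse unions of $c(\by)$ disks, ends bi-Lipschitz to truncated lines via Claim \ref{claim:bilipschitz}, and a smooth compact surface with boundary diffeomorphic to $S_{g,N^a}$, glued along boundary circles. The intermediate appeal to a general "semialgebraic bi-Lipschitz triviality" for the bounded core is unnecessary once that explicit piece-by-piece model is in place, and the paper indeed dispenses with it.
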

\begin{proof}
Let $X_i^a$ be an irreducible LNE curve of $\C^{n_i}$, for $i=1,2$. Let $X_i$
be the projective closure of $X_i^a$ in $\bP^{n_i}$. 
Denote 
$$
d_i := \deg(X_i), \;\; Y_i := (X_i)_\sing = \{\by_i^1,\ldots,\by_i^{s_i}\},
\;\; {\rm and} \;\; X_i^\infty = \{\lbd_i^1,\ldots,\lbd_i^{d_i}\},
$$
since $X_i^a$ is LNE by hypotheses implies that 
$\#X_i^\infty = d_i$. 
\\
For $\by_i \in Y_i$, let $T_{\by_i} X_i^a$ be the tangent cone of 
$X_i^a$ at $\by_i$.
\\
Let $R\gg 1$ be a large radius, as in the proof of Lemma 
\ref{lem:transverse-submfds}, such that for each 
$\by_i^j\in X_i^a$, with $i=1,2$ and $j=1,\ldots,s_i$, the  subset
$$
\cC_i^j := X_i^a \cap B_i^j 
$$
is bi-Lipschitz homeomorphic to its truncated tangent cone
$$
T_{\by_i^j} X_i^a \cap B_i^j,
$$
where $B_i^j$ is either the open ball $B^{2n}(\by_i^j,R^{-1})$ or its closure
$\bB^{2n}(\by_i^j,R^{-1})$. 
In particular the boundary $\dd \cC_i^j$ is a finite union
of $c(\by_i^j)$ embedded $\bS^1$. 
Similarly, for $i=1,2$ and by Claim \ref{claim:bilipschitz},
once $R$ is large enough, the subset
$$
\cX_i := X_i^a \setminus \cB_i,
$$
is bi-Lipschitz homeomorphic to the truncated cone 
$$
\wh{X_i^\infty} \setminus \cB_i = \left(\cup_{j=1}^{d_i}\C\lbd_i^{j} \right)
\setminus \cB_i,
$$
where $\cB_i$ is either the open ball $B_R^{2n}$ or its closure
$\bB_R^{2n}$. The boundary $\dd \cX_i$ is a finite union 
of $d_i$ embedded $\bS^1$.

\medskip
Removing $N (\geq 1)$ open (resp. closed) disks from $S_g$, whose 
closures do not intersect, yields the compact (resp. open and bounded) 
connected surface $S_{g,N}$ with $N$ boundary components. 

\medskip
For $i = 1,2$, let 
$$
N_i^a := d_i + \sum_{j=1}^{s_i} c(\by_i^j) \;\; {\rm and} \;\;
\cC_i^a := \cX_i \cup \left( \cup_{j=1}^{s_i} \cC_i^j\right). 
$$
Therefore the compact (resp. open and bounded) surface with 
boundary $X_i^a\setminus \cC_i^a$ is smoothly diffeomorphic to 
the compact (resp. open and bounded) surface with boundary 
$S_{g_i,N_i^a}$.

\medskip\noindent
Obviously, (a) implies (b). We will show that (b) implies (c) implies (a).

\medskip\noindent
$\bullet$ Assume (b). Let $h:X_1^a \to X_2^a$ be a homeomorphism. 
Since $X_i^a$ is LNE, a point $\by_i$ of $X_i^a$ is singular if and
only if $c(\by_i) \geq 2$. Since $h$ maps each germ 
$$
(X_1^a\setminus \by,\by) \to (X_2^a\setminus h(\by),h(\by))
$$
we deduce that
$$
h(Y_1) = Y_2, \;\; {\rm and} \;\;
c(\by) = c(h(\by)) \;\; \forall \by \in Y_1.
$$
For $R$ large enough, the number of connected components $X_i \setminus 
B_R^{2n}$ are equal for $i=1,2$, that is $d_1 = d_2$. 
Thus we deduce that $g_1 = g_2$ by \eqref{eq:euler-affine-0}. 
Condition (c) is then satisfied.

\medskip\noindent
$\bullet$ Assume (c). Let $d:=d_1 = d_2$ and $g:=g_1 =g_2$, $s:=s_1=s_2$.
For each $i=1,2$ and each $j=1,\ldots,s$ we can assume without 
loss of generality that $c(\by_1^j) = c(\by_2^j)$.

We deduce $X_1^a\setminus \cC_1^a$
and $X_2^a \setminus \cC_2^a$ are both diffeomorphic to $S_{g,N^a}$
for $N^a := N_1^a = N_2^a$.

Since $X_1^a$ and $X_2^a$ are both LNE, we further deduce that 
$\cX_1$ and $\cX_2$ are bi-Lipschitz homeomorphic. So are $\cC_1^j$ and 
$\cC_2^j$ for each $j=1,\ldots,s$.

We can glue all these pieces together to $S_{g,N^a} = X_i \setminus \cC_i^a$ 
along the boundary $\dd \cC_i^a$ to produce a bi-Lipschitz 
homeomorphism $(X_1^a,d_{X_1^a}) \to (X_2^a,d_{X_2^a})$.
Thus (a) is verified.
\end{proof}

\bigskip
As a straightforward consequence of Theorem \ref{thm:classify} we obtain the 
full Lipschitz classification of LNE curves:
\begin{corollary}\label{prop:classify}
Let $X_1$ be a LNE complex algebraic curve of 
$\C^{n_1}$ and $X_2$ be a LNE complex algebraic curve of $\C^{n_2}$. Then 
$X_1$ and $X_2$ are bi-Lipschitz homeomorphic if and only if they are 
homeomorphic.
\end{corollary}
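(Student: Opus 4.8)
The plan is to run the argument of Theorem \ref{thm:classify} without the irreducibility hypothesis, by first splitting a (necessarily connected) LNE curve into its irreducible components, each of which is again LNE, and then keeping track of the extra combinatorial data recording how these components fit together. Note first that a complex algebraic curve which is LNE is path-connected, hence connected; write $X = X^{(1)}\cup\dots\cup X^{(k)}$ for the decomposition of such a curve into irreducible components and let $\bar X,\bar X^{(i)}$ be the projective closures. At a singular point $p$ of $X$, the germ $(X,p)$ is, by Theorem \ref{thm:main} and Proposition \ref{prop:local-LNE}, a finite union of pairwise transverse nonsingular branches; each branch lies in exactly one $X^{(i)}$, so $(X^{(i)},p)$ is again a union of pairwise transverse nonsingular branches and $X^{(i)}$ is locally LNE at $p$. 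By B\'ezout, $\deg\bar X = \sum_i \deg\bar X^{(i)}$ while $\bar X\cap\bH_\infty = \bigcup_i(\bar X^{(i)}\cap\bH_\infty)$; since the hypothesis forces (via Theorem \ref{thm:main}) $\#(\bar X\cap\bH_\infty) = \deg\bar X$, the sets $\bar X^{(i)}\cap\bH_\infty$ are disjoint and each consists of exactly $\deg\bar X^{(i)}$ transverse points. As $X^{(i)}$ is irreducible, hence connected, Theorem \ref{thm:main} shows $(X^{(i)})^a := X^{(i)}\setminus\bH_\infty$ is LNE.

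Next I would decompose $X^a$ metrically exactly as in the proof of Theorem \ref{thm:classify}: fixing a large radius $R$, one gets a piece $\cC^{p}$ inside a small ball around each singular point $p$, bi-Lipschitz to the cone over $c(p)$ complex lines through a common point; a neighbourhood of infinity, bi-Lipschitz by Claim \ref{claim:bilipschitz} to a disjoint union of $\deg\bar X$ half-cones $\C\lbd\setminus\bB_R^{2n}$; and the remaining ``middle'' piece $M := X^a\cap\bB_R^{2n}$ minus those small balls, a compact $C^\infty$ surface with boundary. Now $M$ may be disconnected: since $M$ is contained in the smooth locus of $X$, its connected components are in bijection with the connected components of $X^a\setminus X^a_\sing$, i.e. with the $X^{(i)}$, the component coming from $X^{(i)}$ being diffeomorphic to a surface $S_{g_i,N_i}$ obtained from the genus-$g_i$ normalisation of $X^{(i)}$ by removing $N_i$ disjoint discs. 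All three kinds of piece are LNE (the cones by Lemma \ref{lem:transverse-submfds} and Lemma \ref{lem:locally-LNE}; the half-cones at infinity by Lemma \ref{lem:LNE-complement}; the compact surfaces with boundary by Corollary \ref{cor:submfd-LNE}), and the bi-Lipschitz type of each is determined by discrete data only: the integer $c(p)$ for a singular piece, nothing for an end piece, the diffeomorphism type for $M$. So the complete bi-Lipschitz invariant of $X^a$ is the combinatorial datum consisting of $\deg\bar X$, the multiset $\{c(p)\}$, the $(g_i,N_i)$, and the gluing pattern telling which boundary circle of $M$ is identified with which boundary circle of which $\cC^{p}$ and which with which half-cone at infinity.

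Finally I would verify the two implications. A homeomorphism $h\colon X_1^a\to X_2^a$ carries $X_\sing$ to $X_\sing$ with $c(h(p))=c(p)$ (a point is singular iff its germ has at least two branches), carries ends to ends so $\deg\bar X_1 = \deg\bar X_2$, and carries the smooth locus homeomorphically; since the connected components of the smooth locus are exactly the punctured irreducible components, $h$ matches the $X_1^{(i)}$ with the $X_2^{(i)}$ preserving genera (via \eqref{eq:euler-affine-0} applied componentwise) and preserving, for each local branch at a singular point and each end at infinity, the component of the smooth locus it limits onto. Thus $h$ induces an isomorphism of the combinatorial datum above. Conversely, given such an isomorphism I would replace it piece by piece with a model bi-Lipschitz map — a diffeomorphism between the diffeomorphic surfaces $M_1\to M_2$, together with the obvious bi-Lipschitz identifications of corresponding cone and half-cone pieces — chosen to agree on boundary circles; gluing these finitely many bi-Lipschitz maps along the compact boundary circles produces a homeomorphism $X_1^a\to X_2^a$ which is bi-Lipschitz for the inner metrics, hence, both curves being LNE, bi-Lipschitz for the outer metrics as well.

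I expect the only genuinely new work, and the step needing the most care, to be this last bookkeeping: checking that a homeomorphism of the curves respects the \emph{full} gluing pattern — in particular the partition of the local branches at singular points according to which connected component of the smooth locus they limit onto — and not merely the numerical data $(\deg\bar X,\{c(p)\},\{g_i\})$. The gluing construction itself is identical to the one already carried out in the last paragraph of the proof of Theorem \ref{thm:classify}, and everything else reduces to the facts about cones and compact submanifolds established in Sections \ref{section:G-MP} and \ref{section:ACFISG}.
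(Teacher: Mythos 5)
Your proposal is correct and uses the same decomposition-and-gluing strategy as the paper's proof of Theorem \ref{thm:classify}; the paper derives the corollary by simply invoking that theorem as a ``straightforward consequence,'' whereas you make explicit the two points it leaves implicit --- that each irreducible component of a LNE curve is again LNE (via transversality of branches at singular points and the B\'ezout count at infinity), and that a homeomorphism must respect the full gluing pattern recording how local branches and ends at infinity attach to the components of the smooth locus. Both points are handled correctly, so there is no gap.
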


We obtain in our LNE setting the following
\begin{corollary}\label{cor:prol-LNE}
Let $X^a$ be a LNE curve of $\Cn$ with $n \geq 3$. There exists a Zariski 
dense open subset of linear projections $\Cn\to\C^2$ mapping $X^a$ onto 
a LNE curve of $\C^2$.
\end{corollary}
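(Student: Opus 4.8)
The plan is to apply Theorem~\ref{thm:main} to the image curve and to check that a sufficiently generic projection centre avoids finitely many explicit proper subvarieties of a Grassmannian. Let $X\subset\mathbb{CP}^n$ be the projective closure of $X^a$ and $\bH_\infty$ the hyperplane at infinity. Since $X^a$ is LNE, $X$ is connected, it is smooth and transverse to $\bH_\infty$ at the $d:=\deg X$ distinct points $\lbd_1,\dots,\lbd_d$ of $X^\infty:=X\cap\bH_\infty$, and at each $\by\in X_\sing$ (all lying in $X^a$) the germ $(X,\by)$ is a union of $c(\by)$ smooth branches with pairwise distinct tangent lines. A surjective linear map $\pi\colon\Cn\to\C^2$ extends to a rational map $\tilde\pi\colon\mathbb{CP}^n\dashrightarrow\mathbb{CP}^2$ whose centre is the $(n-3)$-plane $C:=\mathbb{P}(\ker\pi)$, which lies inside $\bH_\infty$, and $\tilde\pi$ carries $\bH_\infty\setminus C$ onto the line at infinity $L_\infty$ of $\C^2$. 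Post-composing $\pi$ by an element of $GL_2(\C)$ leaves $C$ unchanged and alters $\pi(X^a)$ only by a bi-Lipschitz homeomorphism, so whether $\pi(X^a)$ is LNE depends on $\pi$ only through $C$; it is therefore enough to produce a nonempty Zariski-open set of admissible centres in the Grassmannian $\mathbb{G}$ of $(n-3)$-dimensional linear subspaces of $\bH_\infty\cong\mathbb{CP}^{n-1}$, since its preimage in the space of surjective linear maps $\Cn\to\C^2$ is then Zariski-open and dense.

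Write $Y:=\tilde\pi(X)$ and $Y^a:=\pi(X^a)$. I would impose on $C$, using throughout that $C$ has codimension $2$ in $\bH_\infty$ and $n\ge 3$: (1) $C\cap X=\emptyset$, so that $\tilde\pi$ restricts to a morphism $X\to Y$ (this excludes only a finite set of centres); (2) for a general point $p$ of each component of $X$, no secant line of $X$ through $p$ meets $C$ --- since the secant directions issued from a \emph{fixed} point of $X$ form a curve in $\bH_\infty$, this excludes only a proper closed subset of $\mathbb{G}$, and it forces the generic fibre of $\tilde\pi|_X$ to be a single point, hence $\tilde\pi|_X$ is a finite birational morphism onto $Y$; then, the pencil of hyperplanes through $C$ being base-point-free along $X$ by (1), one gets $\deg Y=\deg X=d$; (3) $C$ is disjoint from the union of the tangent lines to $X$ at its smooth points and of the tangent lines to all branches of $X$ at its singular points, a set whose trace on $\bH_\infty$ has dimension at most $1$, so that $\tilde\pi|_X$ is an immersion and every local branch of $Y$ is smooth; (4) for each of the finitely many points $p\in X_\sing\cup\{\lbd_1,\dots,\lbd_d\}$, the centre $C$ avoids the (at most one-dimensional) set of secant directions of $X$ issued from $p$, and avoids each of the finitely many planes spanned by two distinct branch tangents of $X$ at $p$; (5) the images under $\tilde\pi$ of $\lbd_1,\dots,\lbd_d$, of the singular points of $X$, and of the finitely many pairs of points of $X$ identified by $\tilde\pi$ (finitely many by (2)) are pairwise distinct. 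Each item fails only on a proper Zariski-closed subset of $\mathbb{G}$, so their conjunction holds on a nonempty Zariski-open set.

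For such a $C$: away from the finite set $\Sigma:=\tilde\pi(X_\sing\cup X^\infty)\cup\{\text{image nodes}\}$ the map $\tilde\pi|_X$ is an injective immersion by (2)--(4), so $Y^a$ is smooth there; at an image node it is an ordinary double point; at each $\tilde\pi(\by)$, $\by\in X_\sing$, conditions (3)--(5) exhibit $(Y^a,\tilde\pi(\by))$ as a union of exactly $c(\by)$ smooth branches with pairwise distinct tangents, i.e. an ordinary singular point; and at each of the $d$ distinct points $\tilde\pi(\lbd_j)$ the curve $Y$ is smooth (a single branch, by (3)--(5)) and, since the tangent line of $X$ at $\lbd_j$ is not contained in $\bH_\infty=\overline{\tilde\pi^{-1}(L_\infty)}$, transverse to $L_\infty$; combined with $\deg Y=d$ from (2), this gives that $Y\cap L_\infty$ consists of exactly $\deg Y$ points. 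Finally $Y^a$ is connected because $X^a$ is. Thus $Y^a$ satisfies conditions $(0)$, $(i)$, $(ii)$ of Theorem~\ref{thm:main}, hence is LNE in $\C^2$.

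The crux is the genericity bookkeeping in a setting where the centre $C$ is \emph{not} a free linear subspace of $\mathbb{CP}^n$ but is pinned inside the fixed hyperplane $\bH_\infty$: one must check that every relevant ``bad'' locus meets $\bH_\infty$ in dimension at most $1$ (or is finite), so that a generic codimension-$2$ subspace of $\bH_\infty$ avoids it. The genuinely delicate step is condition (2): the secant variety of $X$ can be three-dimensional and its trace on $\bH_\infty$ two-dimensional, so a generic $C$ need not avoid it, and the birationality argument must instead be run with the \emph{one}-dimensional family of secants through a single general point of $X$; one then invokes the standard openness of the birational locus to upgrade this single-point statement to a Zariski-open condition on $C$. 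The remaining verifications --- the immersion along the branches, the local picture at the finitely many distinguished points, and the separation of their images --- only require avoiding finitely many subvarieties of dimension at most $1$, and are routine.
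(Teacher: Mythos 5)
Your global strategy coincides with the paper's: identify the projection with its centre in a Grassmannian of the right dimension ($2n-4$), impose finitely many genericity conditions, and verify hypotheses $(0)$, $(i)$, $(ii)$ of Theorem~\ref{thm:main} for the image curve. (The paper parametrizes by the affine kernel $P\in\bG(n-2,n)$ and works with the Gauss image $G(X)\subset \bG(1,n)$ together with an incidence variety over \emph{pairs} of tangent directions, rather than with the projective centre $C\subset\bH_\infty$; the bookkeeping is otherwise parallel.)

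There is, however, a genuine gap at precisely the point that incidence variety of the paper is meant to control: your assertion that ``at an image node it is an ordinary double point.'' None of your conditions (1)--(5) implies this. Condition (3) makes each local branch of $Y$ smooth, and condition (4) separates branch tangents only at the finitely many \emph{prescribed} points of $X_\sing\cup X^\infty$; but the new double points of $Y^a$ arise from pairs of smooth points $\bx_1\neq\bx_2$ of $X$ that \emph{depend on} $C$, and nothing you impose prevents the tangent lines $T_{\bx_1}X$ and $T_{\bx_2}X$ from projecting to the \emph{same} line through the common image point (for instance, this happens automatically whenever the two tangent lines of $X$ are parallel and the secant direction lies in $C$). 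The resulting germ is a tacnode --- two smooth branches with equal tangents --- which by Proposition~\ref{prop:local-LNE} is not locally LNE, so the conclusion fails for such a $C$. Closing this gap is the classical ``generic projections have only ordinary singularities'' step: one must form the incidence variety over \emph{all} pairs $(\bx_1,\bx_2)\in X\times X$ recording simultaneously that the secant direction lies in $C$ (codimension $2$ in your $\mathbb{G}$) and that the line of $\bH_\infty$ joining the two tangent directions meets $C$ (a further codimension-$1$ condition), and check that the total space has dimension strictly less than $2n-4$, so that its image in $\mathbb{G}$ is a proper closed subset. This is not routine avoidance of a fixed subvariety, since the identified pairs vary with $C$; the same structural objection applies to your condition (5), which is phrased in terms of ``the pairs of points identified by $\tilde\pi$'' and hence is not, as written, the complement of a subvariety of $\mathbb{G}$ chosen in advance. (A minor slip: in (1) the centres meeting $X$ form a codimension-$2$ subvariety of $\mathbb{G}$, not a finite set; the conclusion there is unaffected.)
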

\begin{proof}
The linear projections $\Cn\to\C^2$, identified with their kernels,
 consist of the Grassmann manifold $\bG(n-2,n)$ of the $(n-2)$ planes of 
$\Cn$. Consider the Gauss mapping of $X^a$:
$$
\tau : X^a\setminus X_\sing \to \bG(1,n), \;\; \bx \to T_\bx X.
$$
The closure $G_0(X)$ of 
$\tau(X^a\setminus X)$ is either a curve of $\bG(1,n)$ or a 
finite set of points. Let 
$$
G(X) := G_0(X) \cup \left(\cup_{\lbd \in X^\infty} \C\lbd \right)
$$
Thus $G(X)$ is a sub-variety of $\bG(1,n)$ of dimension $0$ or $1$.
Consider the following sub-variety
$$
V := \{(\lbd,\lbd',P)\in G(X)\times G(X) \times \bG(n-2,n) : \dim_\C \C\lbd + 
\C\lbd' + P \leq n-1\}
$$
We check that $\dim_\C V = 2n-5$ and that $W$, the Zariski closure of the
projection of $V$ onto $\bG(n-2,n)$, has also dimension $2n-5$. 
Therefore
$$
\Omg(X) := \bG(n-2,n) \setminus W
$$
is Zariski open and dense. Let
$\pi_P: \Cn \to \C^2 = P^\perp$ be the orthogonal projection onto the 
orthogonal $P^\perp$ of $P \in \bG(n-2,n)$. By definition of $\Omg(X)$, we 
check that 
$$
X_P^a := \pi_P(X^a)
$$
has exactly $d = deg(X^a)$ points at infinity at each of which $X_P^a$
is transverse with the line at infinity. Therefore the degree of $X_P^a$ is 
$d$. Let $\bz$ be a point of $X_{P,\sing}^a$ and let 
$\pi_P^{-1}(\bz) = \{\bx_1,\ldots,\bx_q\}$. For each $i=1,\ldots,q$, the
number of distinct lines in the tangent cone of $X$ at $\bx_i$ is 
$c(\bx_i)$. 
The definition of $\Omg(X)$ implies that the tangent cone of $X_P^a$ at
$\bz$ will consist of $\sum_{i=1}^q c(\bx_i)$ distinct lines. Since $X$
is LNE and $P\in\Omg(X)$, the germ $(X_P^a,\bz)$ is a finite union of 
pairwise transverse non-singular curve germs. Thus
$X_P^a$ is locally LNE at $\bz$. Since $X^a$ is connected so is $X_P^a$.
Theorem \ref{thm:main} shows that $X_P^a$ is LNE. 
\end{proof}

To end the section let us remark that the above classification allows more 
types than represented by plane algebraic curves.

\begin{corollary}\label{cor:LNE-not-plane}
There exist LNE curves of $\Cn$, with $n\geq 3$, which are not
Lipschitz equivalent to a plane curve. 
In particular,   
there are  LNE curves of $\Cn$ not Lipschitz equivalent 
to any of their plane projections.
\end{corollary}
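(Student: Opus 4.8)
The plan is to combine the complete invariant $DSGM$ of Theorem~\ref{thm:classify} with the classical genus--degree relation for plane curves to produce a Lipschitz obstruction, and then to exhibit an explicit curve realizing it. First I would record a rigidity property of irreducible LNE plane curves. Let $Z^a\subset\C^2$ be an irreducible LNE curve with projective closure $Z\subset\bP^2$ of degree $d$. By Theorem~\ref{thm:main}, $Z$ meets $\bH_\infty$ transversally in $d$ distinct points, so $Z$ has no singular point at infinity, and each singular point $\by$ of $Z$ is an ordinary multiple point: $(Z,\by)$ is a union of $c(\by)$ smooth pairwise transverse branches, hence has multiplicity $c(\by)$ and delta invariant $\binom{c(\by)}{2}$. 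Since a plane curve of degree $d$ has arithmetic genus $\binom{d-1}{2}$, comparison with the geometric genus $g(Z)$ gives
$$
g(Z)\;=\;\binom{d-1}{2}-\sum_{\by\in Z_\sing}\binom{c(\by)}{2}.
$$
In particular a smooth ($s(Z)=0$) irreducible LNE plane curve of degree $d$ must have genus $\binom{d-1}{2}$.

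Next I would exhibit an irreducible LNE curve in $\Cn$, $n\geq3$, whose $DSGM$ invariant violates this relation. Take the rational normal curve $C\subset\bP^n$ (for $n=3$, the twisted cubic $[s^3:s^2t:st^2:t^3]$); it is smooth, rational, of degree $n\geq3$. A generic hyperplane $H$ meets $C$ transversally in $n$ distinct points, so the affine trace $X^a:=C\setminus H\subset\C_H^n$ is connected, has no singular point, and its projective closure meets the hyperplane at infinity in exactly $n=\deg(C)$ points; by Theorem~\ref{thm:main}, $X^a$ is LNE, with $DSGM(X^a)=(n;0;0)$. An irreducible LNE plane curve with this invariant would be smooth of degree $n$ and genus $0$, contradicting $\binom{n-1}{2}\geq1$. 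Since a bi-Lipschitz image of an LNE set is again LNE (lengths of rectifiable paths are distorted only by a bounded factor) and the number of irreducible components of an LNE complex affine curve is a topological invariant (one recovers the normalization topologically by separating the branches at each singular point), any affine curve Lipschitz equivalent to $X^a$ is an irreducible LNE curve, hence has the same $DSGM$ invariant by Theorem~\ref{thm:classify}. Therefore $X^a$ is not Lipschitz equivalent to any plane curve, which is the first statement.

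For the ``in particular'' clause I would invoke Corollary~\ref{cor:prol-LNE}: a generic linear projection $\pi_P:\Cn\to\C^2$ maps $X^a$ onto an LNE plane curve $X_P^a$, again of degree $n$. But $X_P^a$ is a rational plane curve of degree $n\geq3$, hence cannot be smooth; thus $s(X_P^a)\geq1$ or $\deg(X_P^a)<n$ (for the twisted cubic a generic projection is a one-nodal cubic, with $DSGM=(3;1;0;(\by,2))$), and in every case $DSGM(X_P^a)\neq DSGM(X^a)=(n;0;0)$, so $X^a$ is not Lipschitz equivalent to this projection. A non-generic linear projection produces either a curve of strictly smaller degree or a non-LNE curve, hence again a curve not Lipschitz equivalent to $X^a$; so no plane projection of $X^a$ is Lipschitz equivalent to $X^a$.

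The step I expect to be the main obstacle is bookkeeping rather than conceptual: checking that a point which is a union of $c$ smooth pairwise transverse branches has delta invariant exactly $\binom{c}{2}$ (so that the genus--degree formula reads as displayed above), and that a generic linear projection of the chosen curve acquires only ordinary nodes so that the projected curve is indeed LNE with the stated $DSGM$. Both are classical facts about plane curves, so once they are in place the corollary follows immediately from Theorem~\ref{thm:classify} and the genus--degree relation.
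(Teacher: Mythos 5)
Your argument is correct and follows essentially the same route as the paper: both reduce the statement to Theorem~\ref{thm:classify} together with the genus--degree formula for smooth plane curves, and your explicit example (a rational curve of degree $n\geq 3$, with $DSGM=(n;0;0)$) is precisely the one the paper singles out in its closing remark. The only cosmetic differences are that the paper invokes the general existence results of Gruson--Peskine and Ciliberto for smooth space curves with $2g\neq(d-1)(d-2)$ where you use the rational normal curve directly, and that you spell out the (correct, and worth recording) point that a plane curve Lipschitz equivalent to your example must itself be irreducible and LNE before Theorem~\ref{thm:classify} can be applied to it.
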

\begin{proof}
Indeed, for each $n\geq 3$ and each positive 
integer $d$, there is a non-negative integer $p(d,n)$, polynomial in $d$, 
such that for every $g$ such that $0\leq g\leq p(d,n)$, there
is an irreducible non-singular (and non-degenerate) curve of $\bP^n$
with degree $d$ and genus $g$ (see \cite{GrPe,Cil}). Thus, there 
exist non-singular space curves with  $2g  \neq (d-1)(d-2)$. The general 
affine trace of such a projective curve is LNE by Theorem 
\ref{thm:main-proj}. By Theorem \ref{thm:classify}, such an affine trace 
cannot be 
Lipschitz equivalent to a non-singular plane curve, which must satisfy the 
equality $2g  = (d-1)(d-2)$. It can neither be equivalent to any of its 
plane projections since the previous genus argument implies that these must 
have singular points. The claim follows from Theorem \ref{thm:classify}.
\end{proof}

Note that generic affine traces of non-singular irreducible projective
curves with genus zero and degree $d \geq 3$ are the simplest examples of LNE
curves which are not Lipschitz equivalent to any plane curve, since $DSGM 
= (d,0,0)$ cannot be realized by a plane curve, which is opposite to the 
results of \cite{Tei,Fer} in the local case.
%
%
%
%
%
%
%
%
%
%
%
%
%
%
%
%
%
%
%
%
%
%
%
%
%
%
%
%
%
%
%
%
%
%
%
%
%
%
%
%
%
%
%
%
%
%
%
%
%
%
%
%
%
%
%
%
%
%
%
%
%
%

\end{document}